\newtheorem{theorem}{Theorem}
\newtheorem{corollary}[theorem]{Corollary}
\newtheorem{proposition}[theorem]{Proposition}
\newtheorem{lemma}[theorem]{Lemma}
\newtheorem{example}{Example}
\newcommand{\Ebb}{\mathbb{E}}
\newcommand{\Rbb}{\mathbb{R}}
\newcommand{\Zbb}{\mathbb{Z}}
\newcommand{\Pbb}{\mathbb{P}}
\newcommand{\Ecal}{\mathcal{E}}
\newcommand{\Fcal}{\mathcal{F}}
\newcommand{\Lcal}{\mathcal{L}}
\newcommand{\Ocal}[1]{\mathcal{O}\left(#1\right)}
\newcommand{\drm}{\textrm{d}}
\newcommand\bigsubset[1][1.19]{%
   \mathrel{\vcenter{\hbox{\scalebox{#1}{$\subset$}}}}}
\newcommand{\ie}{i.e. }
\newcommand{\eg}{e.g. }
\newcommand{\ddt}{\dfrac{\drm}{\drm t}}
\newcommand{\esperance}[1]{\Ebb\left[ #1 \right]}
\newcommand{\variance}[1]{\textrm{Var}\left[ #1 \right]}
\newcommand{\esperancewithstartingpoint}[2]{\Ebb_{#1}\left[ #2 \right]}
\newcommand{\norminfty}[1]{\left\| #1 \right\|_\infty}
\newcommand{\proba}[1]{\Pbb\left[ #1 \right]}
\newcommand{\probawithstartingpoint}[2]{\Pbb_{#1}\left[ #2 \right]}
\newcommand{\indicator}[1]{\mathds{1}_{\left\{#1\right\}}}
\newcommand{\tmixxepsilon}{\textrm{t}_\textsc{mix}(x;\epsilon)}
\newcommand{\crochet}[2]{\left<#1, #2\right>}
\newcommand{\restr}[2]{{
  \left.\kern-\nulldelimiterspace 
  #1 
  \vphantom{\big|} 
  \right|_{#2} 
  }}
\newcommand{\myrightarrow}[1]{\xrightarrow{\makebox[2em][c]{$\scriptstyle#1$}}}
\renewcommand{\phi}{\varphi}
\DeclarePairedDelimiter{\ceil}{\lceil}{\rceil}
\DeclarePairedDelimiter\floor{\lfloor}{\rfloor}
\newcommand{\var}[1]{\textrm{Var}\left[#1\right]}
\renewcommand{\varpi}[1]{\textrm{Var}_\pi\left[#1\right]}
\newcommand{\tmix}{\textrm{t}_                                                                                                                    \textsc{mix}(\epsilon)}
\newcommand{\infset}[1]{\inf \left\{ #1  \right\}}
\newcommand{\dtv}[2]{\textrm{d}_\textsc{tv}\left( #1, #2 \right)}
\newcommand{\wrt}{w.r.t }
\newcommand{\logn}{\log n}
\newcommand{\goodset}{\{\phi^{\theta_2}\leq L_2 + 4\}}
\newcommand{\good}{\textrm{Good}}
\newcommand{\Tbad}{T_{bad}}
\newcommand{\absolutevalue}[1]{\left|#1\right|}
\newcommand{\phiitheta}{\phi_i^{\theta}}
\newcommand{\phiithetatwo}{\phi_i^{\theta_2}}
\newcommand{\phitheta}{\phi^{\theta}}
\newcommand{\phithetatwo}{\phi^{\theta_2}}
\newcommand{\orange}[1]{\textcolor{black}{#1}}
\begin{document}

\title{\LARGE The mean-field Zero-Range process with unbounded monotone rates: mixing time, cutoff, and Poincaré constant}
 
\author{\Large Hong-Quan Tran\thanks{tran@ceremade.dauphine.fr}}
\affil{\large CEREMADE, CNRS, Université Paris-Dauphine, PSL University 75016 Paris, France}
\maketitle

\begin{abstract}
    We consider the mean-field Zero-Range process in the regime where the potential function $r$ is increasing to infinity at sublinear speed, and the density of particles is bounded. We determine the mixing time of the system, and establish cutoff. We also prove that the Poincaré constant is bounded away from zero and infinity. This mean-field estimate extends to arbitrary geometries via a comparison argument. Our proof uses the path-coupling method of Bubley and Dyer and stochastic calculus.    
\end{abstract}
\tableofcontents

\section{Introduction}
\subsection{Model} 
The Zero-Range process, introduced by Spitzer, is a model of interacting particle systems in continuous time. It describes the evolution of $m \geq 1$ indistinguishable particles jumping randomly across $n \geq 1$ sites, where the speed of a particle only depends on the number of its cooccupants (hence the name Zero-Range). More precisely, the interaction is represented by a function $r: \{1, 2, ...\} \to (0, \infty)$, called \textit{the potential function}, where $r(k)$ is the rate at which a site with $k$ particles expels a particle. For convenience, we let $r(0) = 0$ (no jump from empty sites). In this paper, we focus on the \textit{mean-field} version of the model, where a jumping particle chooses its destination uniformly among all sites. More precisely, we consider a continuous-time Markov chain $X := (X(t))_{t\geq 0} = (X_1(t), X_2(t),...,X_n(t))_{t\geq 0}$ taking values in the state space 
\begin{equation}
    \Omega := \left\{x = (x_1, x_2, ..., x_n) \in \Zbb_+^n: \sum_{i=1}^n x_i = m\right\}, 
\end{equation}
whose Markov generator $\Lcal$ acts on an observable $\phi : \Omega \to \Rbb$ as follows: 
\begin{equation}\label{generator}
    (\Lcal \phi)(x) = \dfrac{1}{n} \sum_{1\leq i, j \leq n} r(x_i)(\phi(x - \delta_i + \delta_j) - \phi(x)).
\end{equation}
Here  $(\delta_i)_{1\leq i \leq n}$ denotes the canonical basis of $\Zbb_+^n$. 
The generator $\Lcal$ is irreducible and reversible with respect to the following law: 
\begin{equation}\label{measurestationnaire}
    \pi(x) \propto \prod_{i = 1}^n \prod_{k = 1}^{x_i} \dfrac{1}{r(k)}, 
\end{equation}
with the convention that an empty product is 1. The classical theory of Markov processes says that starting from any probability on the state space $\Omega$, $X(t)$ will converge in distribution to the stationary law $\pi$ as $t\to \infty$. The speed of convergence from the state $x \in \Omega$ is quantified by the so-called mixing times: 
\begin{equation}
    \tmixxepsilon := \min\{t \geq 0: \dtv{P_x^t}{\pi} \leq \epsilon\}.
\end{equation}
Here $\dtv{\cdot}{\cdot}$ is the total variation distance, defined by $\dtv{\mu}{\nu} = \max\limits_{A\subset \Omega}|\mu(A) - \nu(A)| $, and $P_x^t$ denotes the distribution of $X(t)$ under the probability $\Pbb_x$: $P_x^t(\cdot) = \Pbb_x\left[X(t)\in \cdot \right]$, where $\Pbb_x$ is the law of the process starting from $x$. Of particular interest is the worst-case mixing time: 
\begin{equation}
    \tmix := \max\{\tmixxepsilon: x\in \Omega\},
\end{equation}
where we take the maximum of the mixing times over all initial configurations $x$.\\
We recall that the \textit{Dirichlet form} associated with our process is defined by: 
\begin{equation*}
    \Ecal(\phi,\psi) := -\crochet{\phi}{\Lcal \psi}_\pi,
\end{equation*}
where $\crochet{\phi}{\psi}_\pi := \sum\limits_{x \in \Omega} \pi(x)\phi(x)\psi(x)$ denotes the usual inner-product in $L^2(\Omega, \pi)$. Then the\textit{ Poincaré constant}, denoted by $\lambda_*$, is defined by: 
\begin{equation*}
    \lambda_* := \min \left\{\dfrac{\Ecal(\phi,\phi)}{\variance{\phi}}\right\},
\end{equation*}
where the minimum is taken over all non-constant observables, and $\variance{\phi}$ denotes the variance of $\phi$ under $\pi$. In our case, $\lambda_*$ coincides with the more classical absolute spectral gap due to reversibility of the system: 
\begin{equation*}
    \lambda_* = \lim_{t \to \infty} -\dfrac{1}{t}\log \max_{x\in \Omega} \dtv{P_x^t}{\pi}.
\end{equation*}
The purpose of the present paper is to estimate $\lambda_*$ and $\tmixxepsilon$, under certain assumptions on $r(\cdot)$.
\paragraph{Previous works.} To the best of our knowledge, the total-variation mixing time of the Zero-Range process has only been studied in a few cases: the case where $r$ is constant in \cite{lacoin2016cutoff}, \cite{lacoin2016mixing}, \cite{merle2018cutoff}, the case where $r$ is non-decreasing and bounded in \cite{hermon2020cutoff}, and the somehow-trivial case of independent walkers where $r$ is linear. Regarding the Poincaré constant, a notable result is given in $\cite{morris2006}$, where Morris determines the order of magnitude of $\lambda_*$ in the case where $r$ is constant
. Another result is obtained by Caputo in \cite{caputo2004spectral} for the case where $r$ is homogeneously Lipschitz and increasing at infinity, \ie 
\begin{align}
    \sup_{k \geq 1}|r(k+1) - r(k)| < \infty, \\
    \inf_{k - l \geq \delta} r(k) - r(l) > 0,
\end{align}
for some $\delta\in\Zbb_+$, where he proves that the Poincaré constant is bounded away from zero. In \cite{hermon2019version}, Salez and Hermon prove a comparison principle that allows us to compare the Poincaré constant of many models with that of the mean-field model. We will use this principle below.
\subsection{Main results}
We consider the ``intermediate'' regime where the function $r$ is non-decreasing, unbounded but grows slower than a linear function. More precisely, throughout the paper, we assume that $r$ satisfies:   
\begin{align}\label{rcondition1}
    &r(k+1) \geq r(k), \, \forall k \in \Zbb_+, \\
    \label{rcondition2}
    &\lim_{k\to\infty} r(k) = \infty,\\
    \label{rcondition3}
    &\sup_{k \in \Zbb_+} \dfrac{r(k)}{k} < \infty.
\end{align} 
We study the regime where the number of sites diverges while the density of particles per site remains bounded. More precisely, we always suppose that $m = m(n)$ and $x = x^{(n)}$, and all asymptotic statements refer to the regime:
\begin{equation}\label{densitybounded}
    n \to \infty, \qquad \dfrac{m}{n} \leq \rho,
\end{equation}
where $\rho$ is a positive constant. To lighten the notation, we keep the dependency upon $n$ implicit as much as possible. By a \textit{dimension-free} constant, we mean a number that depends only on $r$ and $\rho$. Our notation $\Ocal{\cdot}$ (resp. $\Omega(\cdot), \, \Theta(\cdot), \, o(\cdot)$) means being upper bounded by (resp. lower bounded by, upper and lower bounded by, negligible compared to) the quantity inside the brackets up to a dimension-free prefactor. 
We define a function $R: \{1,2,...\} \to \Rbb$ as follows: 
\begin{equation}
    \forall k \in \Zbb_+, \, R(k) = \sum_{i = 1}^k \dfrac{1}{r(i)}.
\end{equation}
\orange{Under condition \eqref{rcondition3}, we easily see  that
\begin{align}\label{Rdiverge}
    \lim_{k\to\infty} R(k) = \infty.
\end{align}}
We prove that $R(\norminfty{x})$ is a good estimate for $\tmixxepsilon$, as stated in the following theorem: 
\begin{theorem}[Main result]\label{mixing_time}
   For $\epsilon \in (0, 1)$ fixed, for any initial state $x$,
    \begin{equation}
        \tmixxepsilon \leq (1 + o(1))R(\norminfty{x}) + \Ocal{\logn}.
    \end{equation}
    In addition, if the initial state $x = x^{(n)}$ satisfies $\norminfty{x^{(n)}} \myrightarrow{n \to \infty} \infty$, then
    \begin{equation*}
        \tmixxepsilon \geq (1 - o(1))R(\norminfty{x}).
    \end{equation*}
\end{theorem}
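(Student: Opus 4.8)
\emph{Plan.} The two inequalities are proved by unrelated arguments: the lower bound is a soft distinguishing-statistic estimate obtained by tracking the occupation of the initially fullest site, while the upper bound factorises into a \emph{drainage phase} of length $(1+o(1))R(\norminfty{x})+\Ocal{\logn}$, after which the configuration is flat, followed by a \emph{relaxation phase} of length $\Ocal{\logn}$ handled by path coupling. For the lower bound, fix $\eta\in(0,1)$, a site $i^*$ with $x_{i^*}=\norminfty{x}=:L$, and set $g(x):=R(x_{i^*})$. Evaluating $\Lcal$ on $g$: a departure from $i^*$ (total rate $r(X_{i^*})\tfrac{n-1}{n}$) lowers $g$ by $1/r(X_{i^*})$, and an arrival at $i^*$ (total rate $\tfrac1n\sum_{j\ne i^*}r(X_j)\le\rho\sup_k\tfrac{r(k)}{k}=\Ocal{1}$, by \eqref{rcondition3}) raises $g$ by $1/r(X_{i^*}+1)\le 1/r(1)$, so $\Lcal g=-\tfrac{n-1}{n}+\tfrac1{r(X_{i^*}+1)}\tfrac1n\sum_{j\ne i^*}r(X_j)\in[-1,\Ocal{1}]$; in particular $\Lcal g\ge-1$. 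Hence $N(t):=g(X(t))-R(L)-\int_0^t\Lcal g(X(s))\,ds$ is a martingale with $N(0)=0$, and since the jumps of $g(X)$ have size $\le 1/r(1)$ and occur at rate $\le r(X_{i^*})\tfrac{n-1}{n}+\Ocal{1}$, its predictable quadratic variation is $\langle N\rangle_t=\Ocal{t}$; moreover $g(X(t))=R(L)+\int_0^t\Lcal g\,ds+N(t)\ge R(L)-t+N(t)$. Taking $t=(1-\eta)R(L)$ and using $\norminfty{x^{(n)}}\to\infty$ with \eqref{Rdiverge} (so $R(L)\to\infty$), Chebyshev gives $N(t)\ge-\tfrac\eta2R(L)$ with probability $1-o(1)$, whence $R(X_{i^*}(t))\ge\tfrac\eta2R(L)\to\infty$ and therefore $X_{i^*}(t)\ge c$ with probability $1-o(1)$ for every fixed $c$. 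Since $\Ebb_\pi[X_{i^*}]=m/n\le\rho$ gives $\pi(X_{i^*}\ge c)\le\rho/c$, choosing $c$ with $\rho/c<\tfrac{1-\epsilon}{2}$ yields $\dtv{P_x^t}{\pi}\ge 1-o(1)-\rho/c>\epsilon$ for $n$ large, i.e.\ $\tmixxepsilon\ge(1-\eta)R(L)$; letting $\eta\downarrow0$ finishes the lower bound.

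\emph{Upper bound, drainage.} The engine is the same drift computation: for any site $i$ whose occupancy is large, $R(X_i)$ decreases at rate $1-o(1)$ (departures contribute $-\tfrac{n-1}{n}$, arrivals only $o(1)$ since $1/r(X_i+1)=o(1)$ and the total arrival rate is $\Ocal{1}$ by \eqref{rcondition3}). I would package this into a Lyapunov functional of exponential type in the $R(x_i)$ — the paper's notation $\phi^{\theta_1},\phi^{\theta_2},\psi$ suggests using two such functionals, iterated so as to descend successively through $\biglognset$ and then $\lognset$ — whose expectation satisfies, via Dynkin's formula, a linear differential inequality $\ddt\Ebb[\phi(X(t))]\le-\theta\,\Ebb[\phi(X(t))]+\Ocal{n}$ (the additive term coming from the bulk of low-occupancy sites); this forces exponential decay, and a Markov inequality then places $X(t_1)$ in a flat set $G\subset\lognset$ with probability $1-o(1)$ at time $t_1:=(1+o(1))R(\norminfty{x})+\Ocal{\logn}$. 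A stationary configuration also lies in $G$ with probability $1-o(1)$, by a union bound over the light single-site tails of $\pi$.

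\emph{Upper bound, relaxation.} From time $t_1$ on I would run the Bubley--Dyer path coupling restricted to the flat geometry, coupling two copies that differ by displacing one particle from a site $a$ to a site $b$ using shared Poisson clocks and shared destinations, and checking that the discrepancy is annihilated at rate $\ge\tfrac1n\bigl(r(x_a+1)-r(x_a)\bigr)$ plus the contribution of the clocks at $b$. On $G$ this produces a dimension-free per-unit-time contraction of the expected transport distance, and since that distance is at most polynomial in $n$, path coupling yields coalescence within an extra $\Ocal{\logn}$ time; combining with the drainage step, $\tmixxepsilon\le t_1+\Ocal{\logn}=(1+o(1))R(\norminfty{x})+\Ocal{\logn}$.

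\emph{Main obstacle.} The delicate point is the drainage phase: extracting the \emph{sharp} constant $1+o(1)$ (rather than a mere $\Ocal{1}$) requires the drift of $R$ at a high-occupancy site to be $-1+o(1)$ \emph{uniformly throughout} draining, and the Lyapunov functional must be strong enough to rule out that fluctuations, or re-accumulation of particles on another site, create a new deep site that postpones entry into $G$ beyond $(1+o(1))R(\norminfty{x})$. The path-coupling contraction on $G$ is the other ingredient, but — being essentially the bounded-rate analysis (cf.\ \cite{hermon2020cutoff}) with $r$ truncated at level $\Ocal{\logn}$ — it should be comparatively routine.
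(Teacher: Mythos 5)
Your lower bound is correct and takes a genuinely different (arguably cleaner) route than the paper. You track $g(x)=R(x_{i^*})$ directly, note $\Lcal g\ge -1$ because departures produce drift $-\tfrac{n-1}{n}$ while arrivals only add a nonnegative term, control the martingale part by Chebyshev using the deterministic bound on $\langle N\rangle_t$, and conclude by the light stationary single-site tail. The paper instead builds an explicit stochastic domination (its Lemma~\ref{timedescend}): the emptying time of the top site stochastically dominates $\tfrac{n}{n-1}(S_{x_1}-S_k)$ with $S_j$ a sum of independent exponentials, and then uses Chernoff for sums of exponentials. Both are sound; your martingale route avoids constructing the coupling stopping times at the price of a weaker (polynomial) concentration estimate, which is enough here.

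The upper bound, however, has a genuine gap in the drainage step, and you flag it yourself. A Lyapunov functional of exponential type in $R(x_i)$ cannot, on its own, extract the sharp prefactor $1+o(1)$: the drift identity $\Lcal\, e^{\lambda R(x_i)}\approx -\lambda e^{\lambda R(x_i)}$ hinges on a first-order Taylor expansion that is only accurate when $r(x_i)$ is large, while for low-occupancy sites the true contraction rate $r(x_i)\bigl(1-e^{-\lambda/r(x_i)}\bigr)$ sits strictly below $\lambda$ and the arrival term is of the \emph{same} order $\lambda$; summing over $i$ therefore produces a linear differential inequality with a decay rate $\lambda'<\lambda$ that does not converge to $\lambda$, so the time to flatten is $R(\norminfty{x})/(\lambda'/\lambda)$, off by a constant factor. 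The paper sidesteps this entirely: the sharp phase does \emph{not} use a Lyapunov functional. It proves (Lemma~\ref{timedescend}) that arrivals can only \emph{accelerate} the loss process at a site, so $L_i$ at time $(1+\delta)\bigl(R(\norminfty{x})+\Ocal{\logn}\bigr)$ already exceeds $x_i$ with probability $1-\Ocal{n^{-3}}$ by concentration of sums of exponentials, while the gain $G_i$ is dominated by a rate-$\kappa$ Poisson (Lemma~\ref{bound_gain}). Only \emph{after} that, once $\norminfty{X}=\Ocal{R(\norminfty{x})\vee\logn}$, does the paper invoke $\phi^\theta(x)=\tfrac1n\sum_i e^{\theta x_i}$ with a large $\beta$ to shave the remainder down to $\Ocal{\logn}$ in time $o(R)+\Ocal{\logn}$. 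Your proposal conflates these two phases, and the key missing idea is precisely this stochastic domination of the emptying time by sums of independent exponentials. Separately, your relaxation-phase sketch asserts a dimension-free per-unit-time contraction at rate $\ge\tfrac1n\bigl(r(x_a+1)-r(x_a)\bigr)$, but this quantity can vanish whenever $r$ is locally constant; the paper has to work around this (Lemmas~\ref{induction1}--\ref{induction2}) by forcing the two discrepancy sites to empty and then letting both tagged particles jump simultaneously off empty sites, which has rate $r(1)>0$ even when $\Delta\equiv 0$ on a range.
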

Maximizing over all initial states $x$, we obtain
\begin{corollary}[Cutoff]\label{cutoff}
    Suppose additionally that $R(m) \gg \log n$. Then for $\epsilon \in (0, 1)$ fixed,
    \begin{equation}
        \dfrac{\tmix}{R(m)} = 1 +o(1).
    \end{equation}
    In other words, the system exhibits \textit{cutoff} at time $R(m)$.
\end{corollary}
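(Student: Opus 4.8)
The plan is to deduce the corollary directly from Theorem \ref{mixing_time} by optimising over the initial configuration; no new probabilistic input is required, and the only points to verify are that the additive error term becomes negligible under the extra hypothesis $R(m) \gg \logn$ and that an extremal initial state satisfying the growth condition $\norminfty{x} \to \infty$ actually exists.

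\textbf{Upper bound.} First observe that $R$ is non-decreasing, since its increments $1/r(i)$ are positive, and that $\norminfty{x} \le m$ for every $x \in \Omega$; hence $R(\norminfty{x}) \le R(m)$. Feeding this into the first part of Theorem \ref{mixing_time} and using that the $o(1)$ and $\Ocal{\logn}$ terms there are uniform in $x$, we get $\tmixxepsilon \le (1 + o(1)) R(m) + \Ocal{\logn}$ for every $x$, and therefore $\tmix \le (1 + o(1)) R(m) + \Ocal{\logn}$. The hypothesis $R(m) \gg \logn$ turns the additive $\Ocal{\logn}$ into $o(R(m))$, giving $\tmix \le (1 + o(1)) R(m)$.

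\textbf{Lower bound.} Take $x^{(n)}$ to be the configuration with all $m$ particles stacked on a single site, so that $\norminfty{x^{(n)}} = m$. Since $R$ is bounded on any bounded range of its argument, the assumption $R(m) \gg \logn \to \infty$ forces $m \to \infty$, so this sequence satisfies the growth hypothesis of the second part of Theorem \ref{mixing_time}, which yields $\tmixxepsilon \ge (1 - o(1)) R(m)$ for this $x$, and a fortiori $\tmix \ge (1 - o(1)) R(m)$.

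Combining the two bounds gives $\tmix = (1 + o(1)) R(m)$, \ie $\tmix/R(m) \to 1$; since both bounds hold for each fixed $\epsilon \in (0,1)$ and the leading term $R(m)$ does not depend on $\epsilon$, this is exactly the assertion that the chain cuts off at time $R(m)$. The one mild subtlety — and essentially the only place requiring care — is to check that the $o(1)$ in the upper bound of Theorem \ref{mixing_time} is uniform over initial states, so that it survives taking the maximum over $x$; inspecting the proof of the theorem, the error arises from $n$-dependent quantities rather than from $x$, so this is harmless, but it is worth flagging explicitly.
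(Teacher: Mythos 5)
Your argument is correct and coincides with the paper's intended proof, which is given only as ``maximizing over all initial states $x$''; you have simply filled in the details the paper leaves implicit. The only substantive checks are the ones you identify: $R(\norminfty{x})\le R(m)$ by monotonicity of $R$, the hypothesis $R(m)\gg\logn$ absorbing the additive $\Ocal{\logn}$, the uniformity in $x$ of the $o(1)$ in the upper bound (which indeed holds, since the proof produces, for each fixed $\delta$, an $n$-threshold beyond which $\tmixxepsilon\le(1+\delta)R(\norminfty{x})+(\sigma+\alpha)\logn$ for all $x$ simultaneously), and the existence of an extremal initial state with $\norminfty{x^{(n)}}=m\to\infty$ so that the lower bound of Theorem \ref{mixing_time} applies.
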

The class of functions $r$ that satisfy conditions \orange{\eqref{rcondition1}, \eqref{rcondition2}, \eqref{rcondition3}} is quite large. A natural example is when $r$ is of the form $r(k) = k^\alpha, \, \forall k \in \Zbb_+$, for some $\alpha \in (0,1)$. In this case, 
\[R(k) = (1 + o(1))\dfrac{k^{1-\alpha}}{1-\alpha}, \, \text{as}\: k \to \infty,\]
by the Stolz-Cesàro Theorem. Thereupon, a direct application of our result gives the following.
\begin{example}
    Suppose that $r(k) = k^\alpha, \, \forall k \in \Zbb$, for some $\alpha \in (0,1)$, and suppose that $m \gg (\logn)^{1/(1-\alpha)}$. Then the system exhibits cutoff at time $\dfrac{m^{1-\alpha}}{1-\alpha}$.
\end{example}

Cutoff for the Zero-Range process was obtained in \cite{merle2018cutoff} for the case $r(k) = 1$ and more generally in \cite{hermon2020cutoff} for the case where $r$ is non-decreasing and bounded. Our work complements these results by investigating the case where $r \to \infty$. 
We also prove that the Poincaré constant is bounded away from zero and infinity:
\begin{theorem}[Poincaré constant]\label{spectral_gap}
        $\lambda_* = \Theta(1).$
\end{theorem}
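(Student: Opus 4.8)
**Proof proposal for Theorem \ref{spectral_gap} ($\lambda_* = \Theta(1)$).**

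The plan is to treat the two bounds separately, since they are genuinely of different natures. For the lower bound $\lambda_* = \Omega(1)$, the natural strategy is a coupling/contraction argument that piggybacks on the machinery developed for Theorem \ref{mixing_time}: the path-coupling method of Bubley and Dyer combined with the stochastic-calculus estimates already in hand. Concretely, I would build a Markovian coupling of two copies $X, Y$ of the process started from adjacent configurations (differing by moving one particle from one site to another), and show that the expected coupling distance contracts at a rate bounded below by a dimension-free constant $c_0 > 0$; here the key input is that because $r$ is nondecreasing, the site with the extra particle expels at a faster rate, so the "discrepancy" is resorbed quickly. Once one has such a contraction at a uniform rate for adjacent pairs, path-coupling upgrades it to a contraction of the Wasserstein distance for all pairs, and a standard argument (e.g.\ the one in Levin–Peres, or the Chen–Wang duality between spectral gap and Wasserstein contraction) converts this into the bound $\lambda_* \geq c_0$. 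Since the mixing-time upper bound in Theorem \ref{mixing_time} is obtained by essentially this route, the lower bound on $\lambda_*$ should follow by extracting the contraction rate already established there, rather than by a fresh computation.

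For the upper bound $\lambda_* = O(1)$, it suffices to exhibit a single non-constant test function $\phi$ with $\Ecal(\phi,\phi)/\var_\pi[\phi] = O(1)$. The natural candidate is a linear statistic, for instance $\phi(x) = x_1$ (the occupation number of a fixed site), or a centered/normalized version thereof. One computes $\Ecal(\phi,\phi)$ directly from the generator \eqref{generator}: only jumps into or out of site $1$ change $\phi$, each such jump changes $\phi$ by $\pm 1$, and the total rate of such jumps is $O(\Ebb_\pi[r(x_1)] + \Ebb_\pi[\text{(rate of particles landing at site }1)])$, which under \eqref{rcondition3} (linear growth bound on $r$) and \eqref{densitybounded} (bounded density) is $O(1)$. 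Meanwhile $\var_\pi[x_1]$ is bounded below by a dimension-free constant, because the marginal law of $x_1$ under $\pi$ is (asymptotically) a non-degenerate distribution — this is where one needs the single-site marginal of the product-form-like measure \eqref{measurestationnaire} to not concentrate on a single value, which follows from $r$ being finite and the density being bounded away from $0$ in the relevant regime; if the density can degenerate one replaces $x_1$ by $\sum_i x_i \mathds{1}_{\{i \in S\}}$ for a suitable set $S$, or works with the empirical measure. The arithmetic here is routine once the right $\phi$ is fixed.

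The main obstacle I anticipate is the lower bound, specifically verifying the uniform contraction rate for the path-coupling step when $r$ is unbounded: the discrepancy-resorption rate involves differences $r(x_i+1) - r(x_i)$ or $r(x_i) - r(x_i-1)$, and although $r$ is nondecreasing, these increments are not bounded below (think $r(k) = k^\alpha$, whose increments vanish as $k \to \infty$), so a naive coupling gives a contraction rate that degrades to $0$ on configurations with a heavily loaded site. Overcoming this requires a more careful coupling — presumably the same one used for the mixing-time bound — that exploits that when a site is heavily loaded the extra particle is still expelled at an $\Omega(1)$ absolute rate (not increment rate), together with the bounded-density constraint which prevents the configuration from being "heavily loaded everywhere". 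A secondary, more technical point is making the stochastic-calculus estimates (martingale/Dynkin-formula arguments) uniform over all starting configurations rather than just the worst case, but since Theorem \ref{mixing_time} already handles arbitrary $x$, this should be available. I would therefore organize the write-up as: (i) state the path-coupling contraction lemma, citing or adapting the estimate from the proof of Theorem \ref{mixing_time}; (ii) deduce $\lambda_* = \Omega(1)$ via Wasserstein-contraction $\Rightarrow$ spectral-gap; (iii) prove $\lambda_* = O(1)$ with the explicit test function.
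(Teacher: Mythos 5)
Your overall strategy matches the paper's in spirit, but there are two genuine issues worth flagging, one on each side of the $\Theta(1)$.

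For the lower bound, you correctly identify the obstruction: when $r$ has vanishing increments the discrepancy-resorption rate $\Delta(k)=r(k+1)-r(k)$ degrades, so a naive path-coupling contraction estimate will not be uniform over configurations. But your proposed fix — establish a uniform Wasserstein contraction rate and invoke the Chen--Wang duality — would require exactly the uniformity that fails. The paper sidesteps this more cleanly: it only proves that for each adjacent pair $(x+\delta_i, x+\delta_j)$ the coalescence time $\tau$ of the tagged-particle coupling has \emph{finite} exponential moment $\esperancewithstartingpoint{x,i,j}{e^{\gamma\tau}}<\infty$, with $\gamma$ dimension-free but the moment itself allowed to be arbitrarily large (Proposition~\ref{spectral_gap_lower_bound}). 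Lemma~\ref{spectral_gap_lemma} then converts this into $\lambda_*>\gamma$ by using the spectral characterization $\lambda_*=\lim_{t\to\infty}-\tfrac{1}{t}\log\max_x\dtv{P^t_x}{\pi}$: the dimension-dependent prefactor $A|\Omega|$ is killed by the $t\to\infty$ limit, so no uniform contraction is needed. This is where the ``good set'' and tagged-particle machinery from Section~3 actually enters, not as a contraction rate but as an exponential-moment estimate.

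For the upper bound, your test function $\phi(x)=x_1$ is exactly the right one, and a careful computation shows $\Ecal(\phi,\phi)=\tfrac{n-1}{n}\esperance{r(X_1)}$ exactly (the two ``in'' and ``out'' contributions are equal by exchangeability), which is precisely the Hermon--Salez bound the paper cites. However, your claim that $\var_\pi[x_1]=\Omega(1)$ is false in the low-density regime $m/n\to 0$: the marginal then concentrates near $0$ and the variance vanishes. Your proposed fix of replacing $x_1$ by a block sum does not help, because both the Dirichlet form and the variance of a block sum vanish at the same rate. The correct observation, which the paper makes, is that $\esperance{r(X_1)}$ \emph{also} vanishes, and does so at least as fast: $\esperance{r(X_1)}\le\bigl(\sup_k r(k)/k\bigr)\esperance{X_1}$ while $\variance{X_1}\ge\esperance{X_1}-\esperance{X_1}^2$, so the \emph{ratio} stays $\Ocal{1}$ as $\esperance{X_1}\to 0$. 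So the quantity you need to control is the ratio, not the denominator alone, and the cancellation is the content of the low-density case.

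One more structural difference: the paper gets the upper bound as a byproduct of the comparison inequalities from \cite{hermon2019version} (which simultaneously prove Corollary~\ref{application_spectral_gap}), rather than by an inline Rayleigh-quotient computation. Your test-function route is equivalent but self-contained; either works once the low-density variance issue above is handled correctly.
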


Thanks to the comparisons in the paper \cite{hermon2019version} of Hermon and Salez, we can extend this result to the more general case where a jumping particle chooses its destination according to a doubly stochastic matrix $P$ rather than uniformly among all sites (for example, take $P$ to be the transition matrix of random walk on a regular graph). More precisely, let $P$ be an irreducible doubly stochastic transition matrix on $[n]:= \{1, 2, ..., n\}$, and let $\Lcal^P$ be the generator on $\Omega$ that acts on an observable $\phi: \Omega \to \Rbb$ by: 
\begin{equation*}
    (\Lcal^P \phi)(x) =  \sum_{1 \leq i, j \leq n} r(x_i)P(i,j)(\phi(x - \delta_i + \delta_j) - \phi(x)).
\end{equation*}
Similarly, we can define the Poincaré constants $\lambda_*(P)$ and $\lambda_*(\Lcal^P)$  of $P$ and $\Lcal^P$ via their associated Dirichlet forms and their stationary laws. Then we have the following. 
\begin{corollary}[Poincaré constant in arbitrary geometry]\label{application_spectral_gap}
   $\lambda_*(\Lcal^P) = \Theta(\lambda_*(P))$.
\end{corollary}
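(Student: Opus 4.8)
The plan is to reduce Corollary~\ref{application_spectral_gap} to Theorem~\ref{spectral_gap} via the comparison principle of Hermon and Salez \cite{hermon2019version}. The essential observation is that $\Lcal^P$ is a ``zero-range process driven by the walk $P$'' on $[n]$, while our mean-field generator $\Lcal$ is the zero-range process driven by the complete-graph walk $K$, whose transition matrix is $K(i,j) = 1/n$ for all $i,j$. Both $P$ and $K$ are irreducible doubly stochastic matrices, so they share the same (uniform) stationary measure on $[n]$; consequently the zero-range processes $\Lcal^P$ and $\Lcal^K = \Lcal$ share the \emph{same} reversible measure $\pi$ on $\Omega$ given by \eqref{measurestationnaire}, since $\pi$ depends only on $r$ and not on the underlying walk. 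This is the hypothesis under which the Hermon--Salez comparison applies.

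The key step is to invoke their result in both directions. Their comparison principle states (in the form we need) that for any two doubly stochastic matrices $Q_1, Q_2$ on $[n]$ with $c\, \Ecal_{Q_1} \leq \Ecal_{Q_2}$ as Dirichlet forms on $\ell^2([n], \text{unif})$ — equivalently $c\,\lambda_*(Q_1) \le \lambda_*(Q_2)$ after the appropriate normalization — the associated zero-range Dirichlet forms satisfy $c\,\Ecal_{\Lcal^{Q_1}} \leq \Ecal_{\Lcal^{Q_2}}$ on $L^2(\Omega,\pi)$, hence $c\,\lambda_*(\Lcal^{Q_1}) \le \lambda_*(\Lcal^{Q_2})$. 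Applying this with $(Q_1, Q_2) = (K, P)$ and then with $(Q_1, Q_2) = (P, K)$ gives
\begin{equation*}
    \frac{\lambda_*(P)}{\lambda_*(K)}\,\lambda_*(\Lcal^K) \;\lesssim\; \lambda_*(\Lcal^P) \;\lesssim\; \frac{\lambda_*(P)}{\lambda_*(K)}\,\lambda_*(\Lcal^K),
\end{equation*}
where I use that $\lambda_*(K) = \Theta(1)$ (in fact the complete-graph walk has spectral gap exactly $1$ with the standard normalization, or $n/(n-1)$, in any case $\Theta(1)$). Combining this with Theorem~\ref{spectral_gap}, which gives $\lambda_*(\Lcal^K) = \lambda_*(\Lcal) = \Theta(1)$, yields $\lambda_*(\Lcal^P) = \Theta(\lambda_*(P))$, as claimed.

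The main obstacle — really the only point requiring care — is bookkeeping the normalization conventions so that the comparison is applied with the correct constants. One must check that the Dirichlet form of $\Lcal^P$ decomposes as a sum over sites of single-site contributions each of which is, up to the factor $r(x_i)$-weighting, the Dirichlet form of $P$ acting on the relevant coordinate, and that the Hermon--Salez statement is quoted in a form that tolerates the state-dependent rates $r(x_i)$ (their framework is designed precisely for this, since the zero-range rates are part of the model). One should also confirm that the constants hidden in $\Theta(\cdot)$ remain dimension-free: the constant $c$ produced by the comparison in each direction is universal (it is $1$, up to the $\lambda_*(K)$ normalization), and $\lambda_*(K) = \Theta(1)$ is dimension-free, so no dependence on $n$ creeps in. Once these conventions are pinned down, the proof is a two-line application of \cite{hermon2019version} together with Theorem~\ref{spectral_gap}.
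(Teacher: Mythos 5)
There is a genuine gap in the upper-bound direction, and it stems from a misstatement of the Hermon--Salez comparison.

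Your parenthetical ``$c\,\Ecal_{Q_1}\leq \Ecal_{Q_2}$ --- equivalently $c\,\lambda_*(Q_1)\leq\lambda_*(Q_2)$'' is not correct: domination of Dirichlet forms implies domination of Poincar\'e constants, but not conversely. To run your ``$(Q_1,Q_2)=(P,K)$'' step you would need $\Ecal_P \leq c\,\lambda_*(P)\,\Ecal_K$ pointwise, which is false in general (the sharp constant is the largest eigenvalue of $I-P$, which is $\Theta(1)$, whereas $\lambda_*(P)$ can be tiny, e.g.\ $\Theta(1/(dp^2))$ in Example~\ref{torus_example}). What one actually gets from $\Ecal_P\leq 2\Ecal_K$ is $\lambda_*(\Lcal^P)\leq 2\lambda_*(\Lcal^K)=\Ocal{1}$, which is true but does not recover the $\lambda_*(P)$ scaling. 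Consequently your claimed two-sided sandwich
\begin{equation*}
\frac{\lambda_*(P)}{\lambda_*(K)}\,\lambda_*(\Lcal^K)\;\lesssim\;\lambda_*(\Lcal^P)\;\lesssim\;\frac{\lambda_*(P)}{\lambda_*(K)}\,\lambda_*(\Lcal^K)
\end{equation*}
is not what \cite{hermon2019version} establish, and the upper half of it is simply unproven.

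The lower bound $\lambda_*(\Lcal^P)\geq \lambda_*(P)\lambda_*(\Lcal)$ is indeed their Corollary~3 and your reduction to $\lambda_*(\Lcal)=\Omega(1)$ is fine. But their upper bound (Lemma~13) is of a different nature: it is a single-site test-function bound,
\begin{equation*}
\frac{\lambda_*(\Lcal^P)}{\lambda_*(P)}\;\leq\;\Bigl(1-\tfrac1n\Bigr)\frac{\esperance{r(X_1)}}{\variance{X_1}},
\end{equation*}
and closing the argument requires showing $\esperance{r(X_1)}/\variance{X_1}=\Ocal{1}$. This does \emph{not} follow from $\lambda_*(\Lcal)=\Theta(1)$; it needs a separate estimate on the single-site marginal, which the paper carries out using Proposition~\ref{geometricdecay} (in fact splitting into the cases $m/n\geq 1/2$ and $m/n<1/2$ to handle low density). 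Your proposal omits this entirely, so the upper bound is missing.
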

We give an example where we can compute $\lambda_*(P)$ explicitly to obtain explicit estimate on $\lambda_*(\Lcal^P)$. 
\begin{example}[Poincaré constant of torus model]\label{torus_example}
    Let $P$ be the transition matrix of the simple random walk on the lattice $\Zbb^d/p\Zbb^d$, for some $p, d \in \Zbb_+$. Then $\lambda_*(\Lcal^P) = \Theta(1/(dp^2))$, as $ p^d \to \infty$.
\end{example}
The calculation of $\lambda_*(P)$ is defered to the end of the paper. 
\paragraph{Heuristics.}
If we ignore arrivals and only consider departures of particles, then $R(\norminfty{x})$ is exactly the expectation of the time it takes for the initially highest site to be emptied. In the true system, due to the conditions imposed on $r$, the arrival rate at each site is uniformly bounded, and consequently, $R(\norminfty{x})$ remains a good approximation for the emptying-time. For the lower bound, we prove that before time $R(\norminfty{x})$, the initially highest site still has too many particles, and hence the system has not yet reached equilibrium. 
For the upper bound, we will see that at time $t = (1 + o(1))R(\norminfty{x}) + \Ocal{\logn} $, $\norminfty{X(t)} = \Ocal{ \log n}$. Afterwards, the system quickly reaches equilibrium.  
\paragraph{Acknowledgment.} The author warmly thanks Justin Salez for constructive discussions and his comments on the draft. \orange{The author also kindly thanks the anonymous referee for his suggestion to make the paper more clear and readable.}
\section{Lower bound on the mixing time}
\subsection{Preliminaries}\label{preliminaries}
We will use the following two graphical constructions of the process $X$.
\paragraph{Graphical construction 1.} Let $\Xi$ be a Poisson point process of intensity $\dfrac{1}{n} \textrm{d} t \otimes \textrm{d} u \otimes \textrm{Card} \otimes \textrm{Card}$ on $[0, \infty) \times [0, \infty) \times [n] \times [n]$, where $\textrm{Card}$ denotes the counting measure. Define the piece-wise constant process $X = (X(t))_{t\geq 0}$ taking values in $\Omega$ as follows: $X(0) = x$, and for each point $(t,u,i,j)$ of $\Xi$, 
\begin{equation}
        X(t) := \begin{cases}
        X(t-) - \delta_i + \delta_j, & \text{if } u \leq r(X_i(t-)) \\
        X(t-) &\text{otherwise}.\\
        \end{cases}
    \end{equation}
Then $X$ is a càdlàg Markov process starting from $x$ with generator $\Lcal$.
\paragraph{Graphical construction 2.}Let $\Psi$ be a Poisson point process of intensity $ \textrm{d}t \otimes \textrm{d}u \otimes \textrm{Card}$ on $[0, \infty) \times [0,\infty) \times [n]$. Consider the piece-wise constant process $X = (X(t))_{t\geq 0}$ which starts at $X(0) = x$ and has the following jumps: for each \orange{point} $(t,u,j)$ \orange{of} $\Psi$, 
    \begin{equation}
        X(t) := \begin{cases}
        X(t-) - \delta_i + \delta_j, & \text{if } \dfrac{1}{n} \sum\limits_{k = 1}^{i-1} r\left(X_k(t-)\right) < u \leq \dfrac{1}{n} \sum\limits_{k = 1}^i r(X_k(t-)),\orange{\text{ for some $i \in [n]$}}  \\
        X(t-) &\text{otherwise}.\\
        \end{cases}
    \end{equation}
Then $X$ is also a Markov process starting from $x$ with generator $\Lcal$. We can view the Poisson process in the graphical construction 2 as the repartition of the Poisson process in the graphical construction 1 according to the destination of the jumps.
\paragraph{Filtration.}We always note $(\Fcal_t)_{t\geq 0}$ \textit{the filtration} generated by the Poisson processes in the graphical construction we are using, where $\Fcal_t$ is the $\sigma-algebra$ generated by these processes up to time $t$. It is immediate that the process $X$ is adapted to the filtration and has càdlàg trajectories. 
\paragraph{Mean-field jump rate.}At any time $t$, as the model is mean-field, the arrival rate at each site is the same. We denote this quantity by $\zeta(t)$:
\begin{equation*}
    \zeta(t) := \dfrac{1}{n} \sum_{j = 1}^n r(X_j(t)).
\end{equation*}
Condition \eqref{rcondition3} implies that 
\begin{equation}\label{mf_jump_rate}
    \zeta(t) \leq \left(\dfrac{1}{n}\sum_{j=1}^n X_j(t)\right)\sup_{k \in \Zbb_+} \dfrac{r(k)}{k}\leq \rho\sup_{k \in \Zbb_+} \dfrac{r(k)}{k} =: \kappa.
\end{equation}
Hence, the number of particles arriving at each site is stochastically dominated by a Poisson process of dimension-free intensity $\kappa$.
\paragraph{Martingale associated with an observable.} For any observable $\phi: \Omega \to \Rbb$, under $\Pbb_x$, the process $M = (M(t))_{t \geq 0}$ given by 
\begin{equation}\label{martingale_formula}
    M(t) :=\phi(X(t)) - \phi(x) - \int_0^t\Lcal \phi(X(u)) \textrm{d} u 
\end{equation}
is a zero-mean martingale, see e.g \cite{EK}. Let $\phi_1,\, \phi_2$ be two observables, and let $M_1,\, M_2$ be the associated martingales. Then the \textit{predictable covariation} of $M_1$ and $M_2$ is given by 
\begin{equation}\label{covariation_formula}
    \crochet{M_1}{M_2}_t = \int_0^t\sum_{y\in \Omega} \Lcal(X(u), y)\left(\phi_1(y) - \phi_1(X(u))\right)\left(\phi_2(y) - \phi_2(X(u))\right)du.
\end{equation}
We recall a lemma on the concentration of martingales with jumps (see \cite{vandegeer1995}):
\begin{lemma}[Concentration of martingale]\label{martingale_concentration_lemma}
    Let $(M(t))_{t \geq 0}$ be a zero-mean càdlàg martingale w.r.t a filtration that satisfies the usual conditions. Suppose that $M(0) = 0$ and $M(t) - M(t-) \leq K$ for all $t > 0$ and some $0 \leq K < \infty$. Then for each $a >0, b>0$, 
    \begin{align}
        \proba{\exists t \geq 0:  M(t) \geq a, \crochet{M}{M}_t \leq b^2} \leq \exp{\left[-\dfrac{a^2}{2(aK + b^2)}\right]}.
    \end{align}
\end{lemma}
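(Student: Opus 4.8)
The plan is to run the classical exponential‑supermartingale argument (Freedman's inequality in continuous time; cf.\ \cite{vandegeer1995}). Fix $\lambda > 0$ and set $f(\lambda) := K^{-2}\bigl(e^{\lambda K} - 1 - \lambda K\bigr)$, with the convention $f(\lambda) := \lambda^2/2$ when $K = 0$, and consider the process
\[
Z_\lambda(t) := \exp\!\left(\lambda M(t) - f(\lambda)\,\crochet{M}{M}_t\right).
\]
The first step is to show that $Z_\lambda$ is a supermartingale. Applying Itô's formula for càdlàg semimartingales, one writes $Z_\lambda$ as a local martingale plus a predictable finite‑variation term; using that every upward jump satisfies $\lambda\,\Delta M_s \leq \lambda K$, together with the elementary fact that $x \mapsto x^{-2}(e^x - 1 - x)$ is non‑decreasing (so that $e^{\lambda \Delta M_s} - 1 - \lambda \Delta M_s \leq f(\lambda)(\Delta M_s)^2$), and the inequality $\lambda^2/2 \leq f(\lambda)$ to control the continuous part, one checks that the compensated jump contribution plus the continuous contribution have non‑positive drift against $d\crochet{M}{M}_t$. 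Hence $Z_\lambda$ is a non‑negative local supermartingale, therefore a genuine supermartingale, and $\Ebb[Z_\lambda(t)] \leq Z_\lambda(0) = 1$ for every $t \geq 0$ (a routine localization handles integrability).

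The second step is a stopping‑time argument. Let $\tau := \inf\{t \geq 0 : M(t) \geq a\}$. Because $t \mapsto \crochet{M}{M}_t$ is non‑decreasing, the event $A := \{\exists\, t \geq 0 : M(t) \geq a,\ \crochet{M}{M}_t \leq b^2\}$ is contained in $\{\tau < \infty\}$, and on $A$ one has $\crochet{M}{M}_\tau \leq b^2$, hence $Z_\lambda(\tau) \geq \exp(\lambda a - f(\lambda) b^2)$ there. Optional sampling of the supermartingale $Z_\lambda$ at the bounded time $\tau \wedge T$, followed by Fatou's lemma as $T \to \infty$, gives $\Ebb\bigl[Z_\lambda(\tau)\,\indicator{\tau < \infty}\bigr] \leq 1$, and therefore
\[
\Pbb[A]\,\exp\!\left(\lambda a - f(\lambda) b^2\right) \leq \Ebb\!\left[Z_\lambda(\tau)\,\indicator{A}\right] \leq 1, \qquad \text{i.e.}\qquad \Pbb[A] \leq \exp\!\left(-\lambda a + f(\lambda) b^2\right).
\]

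Finally, I would optimize over $\lambda$ by choosing $\lambda := K^{-1}\log\!\bigl(1 + aK/b^2\bigr)$ (letting $K \to 0$ recovers the Gaussian choice $\lambda = a/b^2$). With this choice $f(\lambda) b^2 = a/K - \lambda b^2/K$, so the exponent becomes $-\lambda a + f(\lambda) b^2 = a/K - \lambda(a + b^2/K)$; substituting $\lambda$ and applying the elementary inequality $\log(1 + u) \geq 2u/(2 + u)$ for $u \geq 0$ (with $u = aK/b^2$) bounds this by $-a^2/\bigl(2(aK + b^2)\bigr)$, which is the claim. The only genuinely delicate point is the verification of the supermartingale property of $Z_\lambda$ with the \emph{predictable} quadratic variation $\crochet{M}{M}$ in the exponent: one must handle the jump measure and its compensator carefully in Itô's formula and ensure the compensated jump contribution is absorbed by $f(\lambda)\,d\crochet{M}{M}_t$; the optional‑sampling step and the final optimization are routine.
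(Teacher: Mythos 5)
The paper does not prove this lemma; it simply cites van de Geer (1995). Your argument is a correct, self-contained reproduction of the Freedman-type exponential supermartingale proof used in that reference: establish that $Z_\lambda(t) = \exp\!\left(\lambda M(t) - f(\lambda)\crochet{M}{M}_t\right)$ with $f(\lambda)=K^{-2}(e^{\lambda K}-1-\lambda K)$ is a supermartingale, apply optional sampling at $\tau\wedge T$ and Fatou, then optimize at $\lambda = K^{-1}\log(1+aK/b^2)$. The two delicate points you flag do close as claimed: monotonicity of $x\mapsto x^{-2}(e^x-1-x)$ on all of $\Rbb$ (not just $\Rbb_+$, which matters since downward jumps are unbounded) absorbs the compensated jump term into $f(\lambda)\,\drm\crochet{M}{M}_t$, and $\log(1+u)\geq 2u/(2+u)$ converts the optimized exponent into $-a^2/\bigl(2(aK+b^2)\bigr)$, so the proof is sound and matches the cited source.
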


\paragraph{Gain/loss at a site.}We will need the following quantities:
\begin{enumerate}
    \item For $i \in [n]$, let $G_i(t)$ be the counting process that counts the number of particles arriving at site $i$ up to time $t$. We call $G_i$ \textit{the gain} at site $i$.
    \item Let $L_i(t)$ be the counting process that counts the number of particles jumping out of site $i$ up to time $t$. We call $L_i$ \textit{the loss} at site $i$. 
\end{enumerate}
Obviously, $X_i(t) = X_i(0) +G_i(t) - L_i(t)$.

\subsection{Elementary concentration inequalities}

We write $\xi \sim \exp(\lambda)$ to mean that $\xi$ is an exponential variable with parameter $\lambda$, \ie $\xi$ has density $\lambda e^{-\lambda x} \indicator{x > 0} \drm x$. We list here some useful inequalities, whose proofs are simple applications of Chernoff's bound (see, \eg \cite{boucheron:hal-00942704} for more details on Chernoff's bound).


\begin{lemma}[Poisson concentration]\label{concentrationPoisson}
    Let $Z$ be a Poisson variable. Then for any $B > 1$, 
        \[\proba{Z \geq B\Ebb Z} \leq e^{-(1+B\ln B - B)\Ebb Z}.\]
\end{lemma}

\begin{lemma}[Concentration of sum of independent exponential variables]
    Let $\lambda_1, ..., \lambda_k$ be positive numbers, and let $\xi_1, ..., \xi_k$ be independent random variables such that $ \xi_i \sim \exp(\lambda_i),\, \forall 1 \leq i \leq k $. Let 
    \begin{equation*}
        S := \sum_{i = 1}^k \xi_i.
    \end{equation*}
    For $B>0$ arbitrary, we have the following inequalities:
    \begin{equation}\label{concentration_inf}
        \proba{S - \Ebb S \leq -\sqrt{\var{S}}B} \leq e^{-B^2/4},
    \end{equation}
    and 
\begin{equation}\label{concentration}
    \proba{S - \Ebb S \geq \lambda \var{S} + \dfrac{B}{\lambda}} \leq e^{-B/2},
\end{equation}
where $\lambda = \min\limits_{1 \leq i \leq k}\{\lambda_i\}$.
\end{lemma}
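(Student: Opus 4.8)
The plan is a two-sided Chernoff (exponential Markov) bound, exploiting that $\xi_i\sim\exp(\lambda_i)$ has the explicit moment generating function $\Ebb[e^{s\xi_i}]=\lambda_i/(\lambda_i-s)$ for $s<\lambda_i$, together with $\Ebb S=\sum_{i=1}^k\lambda_i^{-1}$ and $\var{S}=\sum_{i=1}^k\lambda_i^{-2}$. The two displayed inequalities are treated independently, the lower tail being the easy direction.

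For \eqref{concentration_inf}: for any $s>0$, Markov's inequality applied to $e^{-sS}$ gives $\proba{S\le\Ebb S-t}\le e^{s(\Ebb S-t)}\prod_{i=1}^k\lambda_i/(\lambda_i+s)$. Taking logarithms and using the elementary inequality $\log(1+u)\ge u-u^2/2$, valid for all $u\ge 0$, with $u=s/\lambda_i$, the terms $s\Ebb S$ cancel and the exponent is bounded by $-st+\tfrac{s^2}{2}\var{S}$. Optimizing in $s$ (namely $s=t/\var{S}$) yields $\proba{S\le\Ebb S-t}\le e^{-t^2/(2\var{S})}$, and the choice $t=B\sqrt{\var{S}}$ gives $e^{-B^2/2}\le e^{-B^2/4}$, which is \eqref{concentration_inf}.

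For \eqref{concentration}: here the moment generating function $\lambda_i/(\lambda_i-s)$ blows up as $s\uparrow\lambda_i$, so we cannot optimize freely and must keep $s$ bounded away from $\lambda=\min_i\lambda_i$ — this is the only real obstacle. For $0<s<\lambda$, Markov's inequality for $e^{sS}$ gives $\proba{S\ge\Ebb S+t}\le e^{-s(\Ebb S+t)}\prod_{i=1}^k\lambda_i/(\lambda_i-s)$. A one-line calculus check shows $-\log(1-u)\le u+u^2$ for $u\in[0,1/2]$; imposing $s\le\lambda/2$ ensures $s/\lambda_i\le 1/2$ for every $i$, so that $-\sum_i\log(1-s/\lambda_i)\le s\Ebb S+s^2\var{S}$. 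Again the $s\Ebb S$ terms cancel, leaving the exponent $-st+s^2\var{S}$. Finally, specializing to $s=\lambda/2$ and $t=\lambda\var{S}+B/\lambda$, the exponent equals $-\tfrac{\lambda}{2}\bigl(\lambda\var{S}+B/\lambda\bigr)+\tfrac{\lambda^2}{4}\var{S}=-\tfrac{\lambda^2}{4}\var{S}-\tfrac{B}{2}\le-B/2$, which is \eqref{concentration}.

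Beyond tracking the constraint $s\le\lambda/2$ in the upper-tail estimate and verifying the two elementary logarithmic inequalities, the argument is pure bookkeeping; I expect no genuine difficulty.
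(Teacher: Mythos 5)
Your proof is correct, and it follows exactly the route the paper intends: the paper does not spell out a proof, saying only that these bounds are ``simple applications of Chernoff's bound,'' and your two-sided exponential-Markov argument with the elementary logarithm bounds $\log(1+u)\ge u-u^2/2$ (for $u\ge 0$) and $-\log(1-u)\le u+u^2$ (for $u\in[0,1/2]$), with the parameter choice $s\le\lambda/2$ in the upper tail, is precisely the standard Chernoff calculation being alluded to.
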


\subsection{Proof of the lower bound}
In this subsection, we prove the lower bound on $\tmixxepsilon$ in Theorem \ref{mixing_time}.
First, we analyze the law of a single site at equilibrium.
\begin{proposition}[Single site marginals at equilibrium]\label{geometricdecay}
    There exists a dimension-free constant $q > 0$ such that 
    \begin{equation*}
        \forall k \in \Zbb_+, \quad \dfrac{\pi(x_1 = k)}{\pi(x_1 = k-1)} < \dfrac{q}{r(k)}.
    \end{equation*}
\end{proposition}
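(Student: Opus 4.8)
The plan is to peel off the first coordinate, reducing the claim to a ratio estimate for the canonical partition functions of the single-site weights, and then to feed in the bounded-density hypothesis \eqref{densitybounded}.

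Write $w(\ell) := \prod_{j=1}^{\ell}\tfrac{1}{r(j)}$ for the single-site weight in \eqref{measurestationnaire}, so that $w(0)=1$ and $w(\ell)/w(\ell-1)=1/r(\ell)$ for $\ell\ge 1$; for $p\ge 1$ and $s\ge 0$ put $F_p(s):=\sum_{z\in\Zbb_+^p,\,|z|=s}\prod_{i=1}^{p}w(z_i)$, with $|z|:=\sum_i z_i$. Summing $\pi$ over all configurations with $x_1$ fixed gives, for $0\le k\le m$, $\pi(x_1=k)=w(k)\,F_{n-1}(m-k)/F_n(m)$ (and $\pi(x_1=k)=0$ if $k>m$, so the inequality is trivial there). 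Hence for $1\le k\le m$, writing $s:=m-k\in\{0,\dots,m-1\}$,
\[
\frac{\pi(x_1=k)}{\pi(x_1=k-1)}\;=\;\frac{w(k)}{w(k-1)}\cdot\frac{F_{n-1}(s)}{F_{n-1}(s+1)}\;=\;\frac{1}{r(k)}\cdot\frac{F_{n-1}(s)}{F_{n-1}(s+1)},
\]
so it suffices to exhibit a dimension-free $q$ with $F_{n-1}(s)\le q\,F_{n-1}(s+1)$ for every $s\in\{0,\dots,m-1\}$.

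For this I would use the elementary ``add-a-particle'' identity. The map $(z,j)\mapsto(z+\delta_j,j)$ is a bijection from $\{(z,j):z\in\Zbb_+^{n-1},\,|z|=s,\,j\in[n-1]\}$ onto $\{(z',j):z'\in\Zbb_+^{n-1},\,|z'|=s+1,\,j\in[n-1],\,z'_j\ge 1\}$, and it multiplies the weight $W(z):=\prod_{i=1}^{n-1}w(z_i)$ by $w(z_j+1)/w(z_j)=1/r(z_j+1)$; summing $W$ over both sides yields
\[
\sum_{|z|=s}W(z)\sum_{j=1}^{n-1}\frac{1}{r(z_j+1)}\;=\;\sum_{|z'|=s+1}W(z')\,\#\{\,j\in[n-1]:z'_j\ge 1\,\}.
\]
The right-hand side is at most $(n-1)F_{n-1}(s+1)$. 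For the left-hand side I fix a dimension-free integer $K>2\rho$: since $|z|=s\le m-1<\rho n$, at most $s/K<n/2$ of the coordinates of $z$ are $\ge K$, so more than $n/2-1$ of them are $\le K-1$, and for those, monotonicity of $r$ (condition \eqref{rcondition1}) gives $1/r(z_j+1)\ge 1/r(K)$. Thus $\sum_j 1/r(z_j+1)\ge (n/2-1)/r(K)\ge (n-1)/(4r(K))$ once $n\ge 4$, and the left-hand side is at least $(n-1)F_{n-1}(s)/(4r(K))$. Comparing the two bounds gives $F_{n-1}(s)\le 4r(K)\,F_{n-1}(s+1)$, hence the proposition with any $q>4r(K)=4r(\lfloor 2\rho\rfloor+1)$ (the strict inequality is free by enlarging $q$ slightly).

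I do not anticipate a genuine obstacle; the one delicate point is that $q$ must be truly dimension-free, and this is precisely where \eqref{densitybounded} enters: bounded density forces a positive proportion of sites to carry at most $K-1$ particles, so — by monotonicity of $r$ — adding one particle to the configuration costs, on average over the sites, at most the dimension-free factor $r(K)$. (Alternatively one could pass to the grand-canonical measure at a suitably chosen fugacity, which is finite for all fugacities thanks to \eqref{rcondition2}, and invoke a local-CLT ratio estimate comparing $\Pbb(\sum_{i<n}Y_i=s)$ with $\Pbb(\sum_{i<n}Y_i=s+1)$; but the combinatorial identity above avoids any analytic input.)
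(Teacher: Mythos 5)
Your proof is correct and takes essentially the same route as the paper's: both compare the weight at level $k$ with level $k-1$ by adding one particle to a random site, and both invoke the bounded-density hypothesis \eqref{densitybounded} to guarantee that a constant fraction of sites are low-occupancy, so that monotonicity of $r$ bounds the cost of that extra particle by a dimension-free factor. You factor out the first coordinate and phrase the comparison as a ratio of canonical partition functions $F_{n-1}(s)/F_{n-1}(s+1)$, whereas the paper works directly with $\pi$ on the full configuration space via the change of variable $z = x - \delta_1 + \delta_l$, but the underlying bijection and estimate coincide.
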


\begin{proof}
    \orange{Let $x$ be an arbitrary configuration in $\Omega$ such that $x_1 \geq 1$.} \orange{Thanks to \eqref{densitybounded},} the number of sites that have at least $2\rho$ particles is at most $\dfrac{n}{2}$, and hence the number of sites that have less than $2\rho$ particles is at least $ \dfrac{n}{2}$. For any $l \in [n]\setminus \{1\}$ such that $x_l < 2\rho$, \orange{thanks to \eqref{rcondition1}}, $$ \dfrac{\pi(x)}{\pi(x- \delta_1 + \delta_l)} = \dfrac{r(x_l+1)}{r(x_1)} \leq \dfrac{r(\ceil{2\rho})}{r(x_1)}.$$
    Taking average over all sites $l$ such that $x_l < 2\rho$, we obtain 
    \begin{align*}
        \pi(x) &\leq \dfrac{r(\ceil{2\rho})}{r(x_1)}\dfrac{2}{n} \orange{\sum_{\substack{l \neq 1\\x_l < 2\rho }}}  \pi(x-\delta_1+\delta_l)\\
        &\leq \dfrac{r(\ceil{2\rho})}{r(x_1)}\dfrac{2}{n} \sum_{l \neq 1} \pi(x-\delta_1+\delta_l).
    \end{align*}
    Now we take the sum over all $x$ such that $x_1 = k$ to get:
    \begin{align*}
        \pi(x_1 = k) &\leq \dfrac{r(\ceil{2\rho})}{r(k)}\dfrac{2}{n} \sum_{x_1 = k}\sum_{l \neq 1} \pi(x-\delta_1+\delta_l)\\
        &= \dfrac{r(\ceil{2\rho})}{r(k)}\dfrac{2}{n} \sum_{l \neq 1}\sum_{x_1 = k} \pi(x-\delta_1+\delta_l)\\
        &= \dfrac{r(\ceil{2\rho})}{r(k)}\dfrac{2}{n} \sum_{l \neq 1}\sum_{\substack{z_1 = k - 1, \\ z_l \geq 1}} \pi(z)  &\text{(change of variable: $z = x - \delta_1 + \delta_l)$}\\
        &\leq \dfrac{r(\ceil{2\rho})}{r(k)}\dfrac{2}{n} \sum_{l \neq 1}\sum_{\substack{z_1 = k - 1}} \pi(z)\\
        &\orange{=}  \dfrac{r(\ceil{2\rho})}{r(k)}\dfrac{2}{n} \sum_{l \neq 1}\pi(x_1 = k - 1)\\
        &\leq \dfrac{r(\ceil{2\rho})}{r(k)}\dfrac{2n}{n} \pi(x_1 = k-1)\\
        &= \dfrac{q}{r(k)}\pi(x_1 = k - 1),
    \end{align*}
    where $q = 2r(\ceil{2\rho}).$
\end{proof}
Now we study the effect of arrivals at a particular site. Recall that $L_i$ denotes the loss at site $i$, as defined at the end of subsection \ref{preliminaries}.
\begin{lemma}[Effect of arrivals]\label{timedescend}
    \orange{Let $x$ be an arbitrary initial configuration. }For $i \in [n]$ and $h \in \Zbb_+$ such that $x_i \geq h$, there exist independent variables $U_k \sim \exp\left(\dfrac{n-1}{n}r(k)\right), \, h \geq k \geq 1$, such that for any $0 \leq k \leq h-1$, 
    \begin{enumerate}
        \item $T_k := U_h + U_{h-1} + ... + U_{h-k}$ is a stopping time,
        \item \orange{almost surely,} $L_i(T_k) \geq k + 1$ ,
        \item \orange{almost surely,} $X_i(t) \geq h-k, \: \forall t \in [0, T_k)$.
    \end{enumerate}
\end{lemma}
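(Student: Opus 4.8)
The plan is to realize all the $U_k$ at once as the successive holding times of a single auxiliary \emph{pure-death} process $Y=(Y(t))_{t\ge 0}$ on $\Zbb_+$, coupled to $X$ through Graphical construction~1. Start $Y$ at $Y(0)=h\le x_i=X_i(0)$ and let $Y$ decrease by one at a point $(t,u,i',j)$ of $\Xi$ precisely when $i'=i$, $j\neq i$ and $u\le r(Y(t-))$, and leave $Y$ unchanged at every other point. The points relevant for $Y$ arrive at rate at most $\tfrac{n-1}{n}r(h)<\infty$, so $Y$ is a well-defined c\`adl\`ag process, and it is adapted to $(\Fcal_t)_{t\ge0}$ since it is read off $\Xi$. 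I will then establish two structural facts: (a) $Y(t)\le X_i(t)$ for all $t$, and every jump of $Y$ occurs simultaneously with a departure from site $i$; (b) $Y$ is a pure-death Markov chain whose holding time at level $\ell$ is $\exp(\tfrac{n-1}{n}r(\ell))$, with successive holding times independent.

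For (a) I would inspect the effect of each point of $\Xi$, maintaining the invariant $Y\le X_i$ (which holds at time $0$ since $x_i\ge h$). Points not involving site $i$ change neither $Y$ nor $X_i$, and a point with destination $i$ cannot decrease $X_i$; so only a point $(t,u,i,j)$ with $j\neq i$ is delicate. Such a point is a departure from $i$ iff $u\le r(X_i(t-))$, and a death of $Y$ iff $u\le r(Y(t-))$; by \eqref{rcondition1} and the invariant $Y(t-)\le X_i(t-)$ we have $r(Y(t-))\le r(X_i(t-))$, so every death of $Y$ is a departure from $i$, and then both processes drop by one, preserving the inequality. If instead the point is a departure from $i$ but \emph{not} a death of $Y$, i.e.\ $r(Y(t-))<u\le r(X_i(t-))$, then $r(Y(t-))<r(X_i(t-))$ forces $Y(t-)<X_i(t-)$, again by \eqref{rcondition1}, so the drop of $X_i$ alone still leaves $Y(t-)\le X_i(t-)-1=X_i(t)$. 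This proves $Y(t)\le X_i(t)$ for all $t$, and since $h-Y(\cdot)$ counts the jumps of $Y$, also $h-Y(t)\le L_i(t)$ for all $t$.

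For (b), conditionally on $Y(t-)=\ell$ the process $Y$ jumps at rate $\tfrac1n(n-1)r(\ell)=\tfrac{n-1}{n}r(\ell)$, a deterministic function of the current level; as $Y$ makes only $-1$ jumps, applying the strong Markov property of $\Xi$ at the successive jump times of $Y$ shows, in the usual way, that the holding times $U_h,U_{h-1},\dots,U_1$ at levels $h,h-1,\dots,1$ are independent with $U_\ell\sim\exp(\tfrac{n-1}{n}r(\ell))$. Setting $T_k:=U_h+U_{h-1}+\dots+U_{h-k}$, this is exactly the first time $Y$ reaches level $h-k-1$; as a hitting time of the adapted process $Y$ it is an $(\Fcal_t)$-stopping time, which is item~1. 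For $t\in[0,T_k)$ we have $Y(t)\ge h-k$, hence $X_i(t)\ge Y(t)\ge h-k$ by (a), which is item~3. Finally $h-Y(T_k)=k+1$, and $h-Y(\cdot)\le L_i(\cdot)$ by (a), so $L_i(T_k)\ge k+1$, which is item~2. The one genuinely delicate step is the coupling inequality $Y\le X_i$: it uses monotonicity of $r$ twice — to see that the jumps of $Y$ form a sub-collection of the departures from $i$, and to absorb the departures from $i$ that $Y$ ignores — while everything else is bookkeeping with the graphical construction and the standard structure of a pure-death chain.
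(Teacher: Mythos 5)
Your proof is correct and is essentially the paper's argument in a cleaner packaging: the paper defines the holding times $U_h,U_{h-1},\dots$ inductively via the Poisson clocks in Graphical construction~1 and propagates the bounds by the strong Markov property, whereas you make the implicit object explicit as a single pure-death process $Y$ coupled to $X$ through the same $\Xi$, then read items~1--3 off the pathwise monotone coupling $Y\le X_i$. The mechanism is identical in both — monotonicity of $r$ is used both to show that every death of $Y$ is a genuine departure from site $i$ and to absorb departures that $Y$ ignores — so this is the same approach, not a genuinely different one.
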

The intuition is as follows: if we ignore arrivals and consider only departures at site $i$, then $T_k$ is the time to have $k+1$ particles depart from $i$. Arrivals, on the one hand, slow down a site from being emptied, but on the other hand, accelerate the rate of expelling and hence increase the loss. So the inequalities at points 2 and 3 follow.
\begin{proof}
    We use the graphical construction 1. 
    We first prove for the case $k = 0$. Let $U_h$ be defined by 
    \begin{equation*}
        U_h = \infset{t \geq 0: \Xi\Big([0,t] \times[0,r(h)] \times \{i\} \times [n]\setminus\{i\}\Big) > 0}.
    \end{equation*} It is clear that $U_h$ is a stopping time and $U_h \sim \exp\left(\dfrac{n-1}{n}r(h)\right)$.
    Moreover, by definition of $U_h$,
    $$\Xi\Big([0, U_h) \times [0,r(h)] \times \{i\} \times [n]\setminus\{i\}\Big) = 0,$$ 
    so before time $U_h$, $X_i$ cannot fall from $h$ to $h-1$. In other words, $\forall t \in [0, U_h), \: X_i(t) \geq h$. In particular, $X_i(U_h - )\geq h$, so there should be a jump from site $i$ to $[n]\setminus\{i\}$ at $U_h$. Consequently, $L_i(U_h) \geq 1$ and $X_i(U_h) = X_i(U_h-) - 1 \geq h-1$, \orange{almost surely}, which finishes the case $k = 0$. \\
    Now we define $U_k$ inductively by:
    \[U_k = \infset{t \geq 0: \Xi\Big((T_{h-k-1}, T_{h-k-1}+ t] \times [0,r(k)] \times \{i\} \times [n]\setminus\{i\}\Big) > 0},\]
    where $T_{h-k-1} = U_h + ... + U_{k+1}$\orange{.}
    The variables \orange{$(U_k)_{h \geq k \geq 1}$} are independent by the stationary and independent increments of Poisson processes. In addition, 
    \[U_k \sim \exp\left(\dfrac{n-1}{n} r(k)\right).\]
    The claim is simply obtained by induction and by the strong Markov property.
\end{proof}
\paragraph{Useful variables.} Lemma \ref{timedescend} allows us to compare certain random times with the random variables $(S_k)_{k \geq 1}$ defined by 
\begin{equation}\label{defsumofexponentialvariable}
    S_k = \sum\limits_{i = 1}^k \xi_i, 
\end{equation}
where $(\xi_i)_{i \geq 1}$ is a sequence of independent random variables such that $\xi_i\sim\exp(r(i)), \, \forall i \in \Zbb_+$.
Obviously, $\esperance{S_k} = \sum\limits_{i=1}^k \dfrac{1}{r(i)} = R(k)$, $\var{S_k} = \sum\limits_{i=1}^k \dfrac{1}{r(i)^2}$. \orange{Due to \eqref{rcondition1}, \eqref{rcondition2}, \eqref{rcondition3},} the functions $r, R$ diverge, \orange{so} we easily see that 
\begin{align}
    \label{ratioexpectationvar}
    &\lim\limits_{k\to \infty}\dfrac{\var{S_k}}{\Ebb S_k} = 0.
\end{align}
\paragraph{} Lemma \ref{timedescend} makes precise the fact that arrivals can only slow down a site from being emptied, while Proposition \ref{geometricdecay} \orange{together with condition \eqref{rcondition2}} say that at equilibrium, the typical height of a site cannot be very large. This leads to the lower bound on $\tmixxepsilon$ in Theorem \ref{mixing_time}:
\begin{proof}[Proof of the lower bound in Theorem \ref{mixing_time}] 
    Let $\delta \in (0,1)$ be fixed, and let $x \in \Omega$ be arbitrary. We only need to prove that for $\norminfty{x}$ sufficiently large,
    \begin{equation}
    \dfrac{\tmixxepsilon}{R(\norminfty{x})} > 1 - \delta.
    \end{equation} 
    Without loss of generality, suppose that site $1$ is originally the highest, \ie $\norminfty{x} = x_1$. We know that for any $A \subset \Omega$, 
    \[\dtv{P^t_x}{\pi}\geq P^t(x, A) - \pi(A).\]
    We choose $$A = \{y \in \Omega: y_1 \geq k\},$$ where 
    \begin{equation*}
        k = \sup\left\{ l \in \Zbb_+: |R(l)| \leq\dfrac{\delta}{2} R(\norminfty{x})\right\}.    
    \end{equation*}
    We only need to show that $\pi(A) = o(1)$, and for $t = (1 - \delta)R(x_1)$, $P^t_x(A) = 1 - o(1)$.\\
    \orange{Thanks to \eqref{Rdiverge}}, $\norminfty{x} \gg 1$ ensures that \orange{$R(\norminfty{x}) \gg 1$ and hence} $k \gg 1$, so by Proposition \ref{geometricdecay} \orange{and \eqref{rcondition2}}, $\pi(A) = o(1)$. On the other hand, we apply Lemma \ref{timedescend} with $i = 1, h = x_1$ to conclude that there exists a stopping time  
    \[T_{x_1 - k - 1} \overset{(d)}{=} \dfrac{n}{n-1}(S_{x_1} - S_k),\]
    where the sequence $(S_k)_{k \geq 1}$ is defined in \eqref{defsumofexponentialvariable}, such that $X_1(t) \geq k + 1, \: \forall t \in[0, T_{x_1 - k -1})$. We \orange{define} $S_{k, x_1} = S_{x_1} - S_k$, \orange{for any $k < x_1$}. Then 
    \begin{align*}
        P_x^{(1-\delta)R(x_1)}\left[A\right] &\geq \probawithstartingpoint{x}{T_{x_1 - k - 1} > (1-\delta)R(x_1)} \\&\geq \proba{S_{k,x_1}> (1-\delta)R(x_1)} \\ 
        &= 1 - \proba{S_{k,x_1} \leq (1-\delta)R(x_1)}.
    \end{align*}
    Moreover, $R(k) \leq \dfrac{\delta}{2}R(x_1)$ by definition of $k$, and hence $\esperance{S_{k, x_1}} = R(x_1) - R(k) \geq (1-\delta/2)R(x_1)$. So by the concentration inequality \eqref{concentration_inf},
    \begin{align*}
        \proba{S_{k,x_1} \leq (1-\delta)R(x_1)} &\leq \proba{S_{k,x_1} - \Ebb S_{k,x_1} \leq -\dfrac{\delta}{2}R(x_1)}\\
        &\leq \exp\left(-\dfrac{1}{4}\cdot\dfrac{\delta^2}{4}R(x_1)^2\var{S_{k,x_1}}^{-1}\right)\\ 
        &\leq \exp\left(-\dfrac{\delta^2}{16}R(x_1)\var{S_{x_1}}^{-1}\right).\\
        &=o(1)\orange{,}
    \end{align*}
    \orange{where in the third inequality we have used the fact that $\variance{S_{x_1}} > \variance{S_{k, x_1}}$, and in the last equality we have used \eqref{ratioexpectationvar} and the fact that $x_1 = \norminfty{x} \myrightarrow{n \to \infty} \infty$.}
    Hence $P^t_x(A) = 1 - o(1)$, which finishes our proof.
\end{proof}

\section{Upper bound on the mixing time }
We will prove the following statements:
\begin{proposition}[Dissolution]\label{key_proposition}
    There exist dimension-free constants $\sigma, \alpha_1$ such that for \orange{any} $\delta \in (0,1)$ fixed, for any $x \in \Omega$, for any $t \geq (1 + \delta)R(\norminfty{x}) + \sigma \logn$,
    \begin{align*}
        \probawithstartingpoint{x}{\norminfty{X(t)} \geq \alpha_1 \logn} = \Ocal{n^{-2}}.
    \end{align*}
\end{proposition}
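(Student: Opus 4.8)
The plan is to track the evolution of $\norminfty{X(t)}$ and show that after time $\approx R(\norminfty{x})$ the tallest site has been drained down to height $\Ocal{\log n}$, with the failure probability controlled at the polynomial scale $n^{-2}$ by a union bound over the $n$ sites. First I would fix a site $i$ and an initial height $x_i$, and compare the \emph{loss} process $L_i$ to the idealized emptying dynamics of Lemma~\ref{timedescend}: the time for site $i$ to lose $x_i - \ceil{\alpha_1 \log n}$ particles is stochastically dominated by $\frac{n}{n-1}(S_{x_i} - S_{\lceil \alpha_1 \log n\rceil})$, whose expectation is $R(x_i) - R(\lceil \alpha_1\log n\rceil) \le R(\norminfty{x})$. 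By the concentration inequality \eqref{concentration} for sums of independent exponentials (with $\lambda = r(1)$, since $r$ is non-decreasing), this random time exceeds $(1+\delta)R(\norminfty{x}) + \sigma\log n$ with probability at most $e^{-B/2}$ where $B$ can be taken of order $\log n$ once $\sigma$ is large enough relative to $1/\lambda$ and $\var{S_{x_i}} = \Ocal{1/r(1)^2}$ is uniformly bounded; this gives a bound $\Ocal{n^{-3}}$ per site, so the union bound over all $n$ sites yields $\Ocal{n^{-2}}$.

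The subtlety is that draining site $i$ below $\alpha_1\log n$ is not enough: after site $i$ is first brought low, \emph{arrivals} may push it back up, so I must also control the probability that any site ever climbs above $\alpha_1\log n$ during the relevant time window. Here I would use the mean-field bound \eqref{mf_jump_rate}: the gain process $G_i$ at each site is stochastically dominated by a Poisson process of dimension-free intensity $\kappa$, uniformly in time. On the event that the total time horizon is $\Ocal{R(\norminfty{x}) + \log n}$ — wait, this is unbounded — so instead I would argue on a per-excursion basis: once $X_i$ reaches a moderate height $\beta\log n < \alpha_1\log n$, the departure rate is at least $r(\beta\log n) \to \infty$, which for $\beta$ chosen appropriately exceeds $2\kappa$ (using \eqref{rcondition2}); thus above height $\beta\log n$ the site behaves like a strongly subcritical birth-death chain (birth rate $\le \kappa$, death rate $\ge r(\beta\log n) \gg \kappa$), and the probability it ever reaches $\alpha_1\log n$ starting from $\beta\log n$ is at most $(\kappa/r(\beta\log n))^{(\alpha_1-\beta)\log n} = \Ocal{n^{-3}}$ for suitable $\alpha_1$. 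A union bound over sites and over the (polynomially many, up to truncating the time axis at say $n^{10}$) relevant excursions closes this part.

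Combining: with probability $1 - \Ocal{n^{-2}}$, every site is brought below $\beta\log n$ by time $(1+\delta)R(\norminfty{x}) + \sigma\log n$ and no site subsequently exceeds $\alpha_1\log n$, which is exactly the claim. The martingale concentration lemma (Lemma~\ref{martingale_concentration_lemma}) could substitute for the birth-death comparison in the second step if one prefers to work directly with $\norminfty{X(t)}$ or with a suitable exponential functional $\sum_i f(X_i(t))$ of the configuration, applied to the martingale \eqref{martingale_formula}; that may in fact be the cleaner route and is probably how the author proceeds. The main obstacle I anticipate is the second step — controlling rebounds from arrivals uniformly over the (a priori infinite) time horizon — and the right fix is to exploit the strict supercriticality of the drift at heights $\gg 1$ provided by \eqref{rcondition2}, so that high excursions are exponentially (in $\log n$, hence polynomially in $n$) unlikely and a crude truncation of the time axis suffices for the union bound.
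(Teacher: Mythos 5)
There is a genuine gap. Your step~1 controls only the \emph{loss} process $L_i$: applying Lemma~\ref{timedescend} with a stopping level around $\lceil\alpha_1\log n\rceil$ gives $L_i(t)\geq x_i-\lceil\alpha_1\log n\rceil$ by time $t\approx (1+\delta)R(\norminfty{x})+\sigma\log n$, but since $X_i(t)=x_i+G_i(t)-L_i(t)$ this only yields $X_i(t)\leq \lceil\alpha_1\log n\rceil + G_i(t)$. Over the relevant window the gain $G_i$ is dominated by a Poisson process of rate $\kappa$, so typically $G_i(t)=\Theta\big(R(\norminfty{x})\vee\log n\big)$, which in the regime $R(\norminfty{x})\gg\log n$ is far larger than $\alpha_1\log n$. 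So step~1 (even once you add the Poisson bound on the gain that you elide) only brings the height down to $\Ocal{R(\norminfty{x})\vee\log n}$, not to $\Ocal{\log n}$. Your step~2 (the birth-death comparison) is aimed at rebounds, \ie the probability of climbing from $\beta\log n$ back up to $\alpha_1\log n$, and does not address the still-missing descent from $\Theta(R(\norminfty{x}))$ down to $\beta\log n$ within the remaining time budget $\Ocal{\delta R(\norminfty{x})+\log n}$.

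The paper fills this gap with a two-phase decomposition. Phase~1 (Lemma~\ref{bound_gain} + Lemma~\ref{timedescend} combined in Proposition~\ref{first_phase_dissolution}) establishes exactly the intermediate state you implicitly reach: at time $t_1=(1+\delta/4)(R(\norminfty{x})+d\log n)$, $\norminfty{X(t_1)}\leq c_0(R(\norminfty{x})\vee\log n)$ with probability $1-\Ocal{n^{-2}}$. Phase~2 then uses the exponential Lyapunov function $\phi^\theta(x)=\frac1n\sum_i e^{\theta x_i}$, the drift inequality $\Lcal\phi^\theta\leq -\beta\phi^\theta + cL$ of Lemma~\ref{boundL}, and Gronwall's lemma (Proposition~\ref{concentration_and_drift}) to conclude that from any starting height $h$, after time $\frac{\theta}{\beta}h$ the max is $\Ocal{\log n}$ whp; choosing $\beta$ large enough so that $\frac{\theta c_0}{\beta}\leq\frac{\delta}{2}$ makes the phase-2 time $\leq\frac{\delta}{2}(R(\norminfty{x})\vee\log n)$, which fits inside the $\delta$ slack. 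Your closing remark that a drift-martingale argument ``is probably how the author proceeds'' is indeed the right instinct, but note the paper's phase~2 is a Gronwall bound on $\Ebb_x[\phi^\theta(X(t))]$ followed by Markov's inequality, not the concentration lemma for martingales with jumps — that lemma is reserved for the later path-coupling analysis (Lemma~\ref{lemma_for_prop_stability}). Your birth-death comparison is morally similar to the Lyapunov drift (both exploit \eqref{rcondition2} to get supercritical downward drift at heights $\gg 1$) and could likely be made to work, but it would need an explicit first-passage-down estimate from $\Ocal{R(\norminfty{x})}$ to $\beta\log n$, which is not in your sketch.
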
    
\begin{proposition}[Quick convergence to equilibrium]\label{quick_convergence}
    Let $\alpha_1$ be defined as in Proposition \ref{key_proposition}. Then there exists a dimension-free constant $\alpha$ so that for any configuration $x$ such that $\norminfty{x} \leq \alpha_1 \logn$,
    \begin{align*}
        \dtv{P^{\alpha \logn}_x}{\pi} = \Ocal{n^{-2}}.
    \end{align*}
\end{proposition}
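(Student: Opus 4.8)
The plan is to combine the confinement phenomenon already at our disposal with the path-coupling method. Fix a large dimension-free constant $\beta$ and set $\mathcal{G}:=\{x\in\Omega:\norminfty{x}\le\beta\logn\}$. Two soft facts come first. (a) The measure $\pi$ is carried by $\mathcal{G}$ up to an error $\Ocal{n^{-3}}$: iterating Proposition~\ref{geometricdecay} gives $\pi(x_1\ge k)\le C\prod_{j=1}^{k}q/r(j)$, and since $r(j)\to\infty$ by \eqref{rcondition2} this product decays geometrically once $j$ exceeds a dimension-free threshold, so $\pi(x_1\ge\beta\logn)=\Ocal{n^{-c\beta}}$; a union bound over the $n$ sites then gives $\pi(\Omega\setminus\mathcal{G})=\Ocal{n^{-3}}$ for $\beta$ large. (b) From any start in $\mathcal{G}$ the chain cannot leave a fixed dimension-free dilation $\mathcal{G}'\supset\mathcal{G}$ before time $\alpha\logn$ (for $\alpha$ a dimension-free constant): by \eqref{mf_jump_rate} the gain $G_i$ at each site is dominated by a rate-$\kappa$ Poisson process, so $X_i(t)\le\norminfty{X(0)}+G_i(t)$, and Lemma~\ref{concentrationPoisson} together with a union bound gives $\Pbb_x[\exists\,t\le\alpha\logn:X(t)\notin\mathcal{G}']=\Ocal{n^{-3}}$.

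The core is the following path-coupling estimate: for any two \emph{adjacent} configurations $x,y\in\mathcal{G}$ — meaning $y=x-\delta_a+\delta_b$ for two sites $a,b$, so that $x$ carries one extra particle on column $a$ and one fewer on column $b$ — there is a coupling $(X,Y)$ of the two dynamics for which $\Pbb[X_{\alpha\logn}\ne Y_{\alpha\logn}]=\Ocal{n^{-3}}$. The coupling I would use (i) synchronises every jump whose source is neither $a$ nor $b$; (ii) uses the monotone coupling at columns $a$ and $b$, which is legitimate because $r$ is non-decreasing; and (iii), the decisive ingredient, pairs the \emph{unmatched} departures off the excess column (those occurring in $X$ but not in $Y$) with the unmatched departures off the deficit column (those occurring in $Y$ but not in $X$), so that whenever both fire they send their particle to the \emph{same} uniformly chosen site — a one-line computation shows this event makes $X$ and $Y$ coalesce. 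I would then bound the coupling time in two stages, using the martingale decomposition \eqref{martingale_formula} for the height of a single column. First, since $r(j)\to\infty$ forces $R(k)/k\to0$ and hence $R(\beta'\logn)=o(\logn)$, a one-sided version of the lower-bound argument — concentration for the relevant sums of exponential variables, as in the elementary concentration inequalities above — shows that both discrepant columns drop to $\Ocal{1}$ height within $o(\logn)$ time, hence well before $\alpha\logn$, with probability $1-\Ocal{n^{-3}}$. Second, once both columns are low the pairing in (iii) fires at an order-one rate: for instance, whenever column $a$ is empty in $Y$ and column $b$ is empty in $X$ — a joint event of positive dimension-free frequency — both unmatched departure rates are at least $r(1)=\Theta(1)$; hence coalescence occurs before $\alpha\logn$ except with probability $\Ocal{n^{-3}}$. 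A relocation jump occasionally moves the discrepancy onto one of the (rare) tall columns, but that only forces an $o(\logn)$ re-descent, and excursions of either chain out of $\mathcal{G}'$ are absorbed by (b).

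Granting this estimate, the proof concludes in the usual way. For $x$ with $\norminfty{x}\le\alpha_1\logn\le\beta\logn$ and any $y\in\mathcal{G}$, a shortest chain of adjacent configurations joining $x$ to $y$ can be routed through a near-flat configuration and hence kept inside $\mathcal{G}$; it has length $\Ocal{m}=\Ocal{n}$, so summing the per-edge bound $\Pbb[\text{the two endpoints still differ at time }\alpha\logn]=\Ocal{n^{-3}}$ gives $\dtv{P^{\alpha\logn}_x}{P^{\alpha\logn}_y}=\Ocal{n^{-2}}$. Writing $\pi=\int\pi(\drm y)\,P^{\alpha\logn}_y$, splitting the integral according to whether $y\in\mathcal{G}$, and using (a), we obtain $\dtv{P^{\alpha\logn}_x}{\pi}=\Ocal{n^{-2}}$, which is the assertion, with $\alpha$ the dimension-free constant fixed in the second step large enough that the order-one coalescence rate there produces the $\Ocal{n^{-3}}$ bound.

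I expect the estimate of the second paragraph to be the real obstacle. The naive ``synchronise everything'' coupling fails badly: under it a lone discrepancy disappears only when a relocation jump happens to land on one prescribed site, an event of rate $\Theta(1/n)$, which would predict a mixing time of order $n\logn$ rather than $\logn$. The pairing trick (iii) is precisely what replaces this $\Theta(1/n)$ by an order-one coalescence rate — it only requires two unmatched departures to fire simultaneously, after which any common destination works, instead of a hit on a named site — and turning it into a clean $\Ocal{n^{-3}}$ bound, uniformly over adjacent pairs in $\mathcal{G}$, is what forces the quantitative descent-time control together with the bookkeeping of the rare relocations onto tall columns and of the rare excursions out of $\mathcal{G}'$.
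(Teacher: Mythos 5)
Your plan follows the paper's strategy, and your coupling (i)--(iii) is exactly the paper's tagged-particle coupling of Subsection~3.2: the two ``unmatched departures'' are the two tagged particles, evolving at rates $\Delta(X_I)$ and $\Delta(X_J)$, and pairing them to fire toward a common uniform destination is precisely the role of the shared Poisson process $\Theta$. Your facts (a), (b), and the final path-coupling summation over a chain of adjacent configurations all match the paper's proof.

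The gap is in the uniform coalescence estimate --- the part you yourself flag as ``the real obstacle''. Your good set $\mathcal{G}=\{\norminfty{x}\le\beta\logn\}$ is too weak. With only a sup-norm bound and the density constraint \eqref{densitybounded}, the fraction of sites of height $\ge N$ is controlled only by $\rho/N$, a polynomial tail. The discrepancy particle relocates $\Theta(\logn)$ times before you can accumulate an $n^{-3}$ failure bound, and each relocation landing on a column of height $\ge N$ costs $\Omega(R(N))$ in re-descent time. Stratifying over $N$, the expected re-descent cost per relocation is of order $\sum_{N\le\beta\logn}R(N)/N$, which diverges under a polynomial tail (for $r(k)=k^\alpha$ it is of order $(\logn)^{1-\alpha}$), so the total coalescence time would be $\omega(\logn)$ and your bound would not close. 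The paper avoids this by working inside the strictly smaller set $\good=\{\phi^{\theta_2}\le L_2+4\}$, on which the fraction of height-$\ge N$ sites decays \emph{exponentially} in $N$. Propositions~\ref{moment_hitting_time} and~\ref{stability} show the trajectory enters and stays in $\good$ on the $(\logn)^2$ time scale; Lemma~\ref{exponential_moment_cooccupants}, via the constrained process $X^*$, converts this into a uniform exponential-moment bound on the cooccupants of the tagged particles; and Lemmas~\ref{induction1}--\ref{induction2} then run the $\Theta(\logn)$-attempt argument with a controlled exponential moment for the total elapsed time $T_k$. Your sentence ``a relocation jump occasionally moves the discrepancy onto one of the (rare) tall columns, but that only forces an $o(\logn)$ re-descent'' is exactly the spot where this quantitative machinery --- the $\phi^{\theta_2}$-based good set, the constrained process, and the cooccupant exponential moment --- is needed and is missing from your sketch.
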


First we see how these propositions lead to the upper bound on $\tmixxepsilon$ in Theorem \ref{mixing_time}:
\begin{proof}[Proof of the upper bound in Theorem \ref{mixing_time}:]
    \orange{Let $\sigma, \alpha_1, \alpha$ be defined as in Proposition \ref{key_proposition} and Proposition \ref{quick_convergence}. }Let $x$ be an arbitrary configuration; let $t_1 = (1 + \delta)R(\norminfty{x}) + \sigma\logn$, \orange{for some $\delta$}, $t_2 = \alpha \logn$, and $t = t_1 + t_2$. We only need to prove that for \orange{arbitrary $\delta \in (0,1)$ fixed}, for $n$ large enough, for any $x \in \Omega$, 
    \begin{equation*}
        \tmixxepsilon \leq t.
    \end{equation*}
    By the convexity of the total variation distance, 
    \begin{align*}
        \dtv{P^t_x}{\pi} &\leq \sum_{y \in \Omega} P^{t_1}_x(y) \dtv{P^{t_2}_y}{\pi}\\
        &\leq \probawithstartingpoint{x}{\norminfty{X(t_1)} \geq \alpha_1 \logn} + \max_{\norminfty{y} \leq \alpha_1 \logn } \dtv{P^{t_2}_y}{\pi},
    \end{align*}
    which is $\Ocal{n^{-2}}$ by Proposition \ref{key_proposition} and Proposition \ref{quick_convergence}, hence smaller than $\epsilon$ when $n$ is large enough.
\end{proof}
\orange{The structure of the rest of this section is as follows. In Subsection \ref{Dissolution}, we prove Proposition \ref{key_proposition} and provide some analysis on the trajectory of the system which will be used in the proof of Proposition \ref{quick_convergence}. In Subsection \ref{path_coupling}, we prove Proposition \ref{quick_convergence} by the path coupling method of Bubley and Dyer. }

\subsection{Dissolution}\label{Dissolution}
Recall that $G_i$ denotes the gain at site $i$, as defined at the end of subsection \ref{preliminaries}. First we give an estimate on $G_i$ at the predicted time.
\begin{lemma}[Estimating the gain at a site]\label{bound_gain} 
        For any dimension-free constant $d$, there exists a dimension-free constant $c_0$ such that at time $t = (1 + \delta/4)(R(\norminfty{x}) + d\logn)$, \orange{for any $\delta \in (0,1)$ fixed, } 
        \begin{equation}\label{smallgain}
            \proba{G_i(t) \geq  c_0(R(\norminfty{x})\vee \logn)} = \Ocal{n^{-5}} .
        \end{equation}
\end{lemma}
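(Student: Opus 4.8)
The plan is to bound the gain $G_i(t)$ by stochastically dominating it from above, using the fact established in \eqref{mf_jump_rate} that the arrival rate at any fixed site is at all times bounded by the dimension-free constant $\kappa$. Concretely, via the graphical construction 2 — where arrivals at site $i$ are selected out of a Poisson process $\Psi$ — the counting process $G_i$ is stochastically dominated by a homogeneous Poisson process $N$ of intensity $\kappa$. Hence $G_i(t) \preceq N(t) =: Z$, where $Z$ is Poisson with mean $\kappa t$.

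First I would make the domination rigorous: condition on $\mathcal F_{t-}$ at each jump of $\Psi$ at site $j=i$; the probability that this jump is actually directed to site $i$ as an arrival is $\frac1n \sum_{k=1}^i r(X_k(t-))/\text{(total)}$, wait — more simply, in graphical construction 1 the arrivals to site $i$ form a thinning (by the indicator $u \le r(X_i(t-))$ combined with destination $= i$) of a Poisson process, and the compensator of $G_i$ is $\int_0^t \zeta(u)\,du \le \kappa t$ by \eqref{mf_jump_rate}. A standard comparison (e.g. coupling the arrival clock to an independent exponential clock of rate $\kappa$, or invoking that a counting process with compensator bounded by $\kappa t$ is dominated by a Poisson$(\kappa t)$ variable) gives $\proba{G_i(t) \ge a} \le \proba{Z \ge a}$ for all $a$.

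Next I would plug in the time $t = (1+\delta/4)(R(\norminfty{x}) + d\log n)$, so that $\Ebb Z = \kappa t \le \kappa(1+\delta/4)(R(\norminfty{x}) + d\log n) \le 2\kappa(1+\delta/4)\,(R(\norminfty{x}) \vee \log n)$, a dimension-free multiple of $R(\norminfty{x})\vee\log n$. Then I would apply Lemma~\ref{concentrationPoisson} (Poisson concentration) with a ratio $B>1$ chosen large enough — depending only on $\kappa, d, \delta$ — so that $c_0 := B \cdot 2\kappa(1+\delta/4)$ satisfies $c_0(R(\norminfty{x})\vee\log n) \ge B\,\Ebb Z$, giving
\[
\proba{G_i(t) \ge c_0(R(\norminfty{x})\vee\log n)} \le \proba{Z \ge B\,\Ebb Z} \le e^{-(1 + B\ln B - B)\Ebb Z}.
\]
Since $\Ebb Z \ge \kappa \cdot$ (something $\gtrsim \log n$) — more precisely $\Ebb Z \ge \kappa(1+\delta/4) d\log n$, which is $\Omega(\log n)$ — and $(1 + B\ln B - B) \to \infty$ as $B \to \infty$, choosing $B$ large (dimension-free) makes the exponent at most $-5\log n$, yielding the claimed $\Ocal{n^{-5}}$.

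**The main obstacle** I anticipate is a subtle one rather than a hard one: handling the case where $R(\norminfty{x})$ dominates $\log n$, so that $\Ebb Z$ could be as large as $\Theta(R(\norminfty{x}))$ with no a priori relation to $n$. In that regime the bound $e^{-(1+B\ln B - B)\Ebb Z}$ is not automatically $O(n^{-5})$ from the size of $\Ebb Z$ alone — but it is saved by the additive $d\log n$ term inside $t$, which guarantees $\Ebb Z \ge \kappa(1+\delta/4)d\log n$ regardless. So the real care is in choosing $B$ as a function of $d$ (and $\kappa,\delta$) so that $(1 + B\ln B - B)\kappa(1+\delta/4)d \ge 5$; this is where the statement "for any dimension-free constant $d$, there exists a dimension-free constant $c_0$" gets its quantifier structure, and I would make sure the dependence of $c_0$ on $d$ is spelled out. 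Everything else — the stochastic domination and the Chernoff-type Poisson tail — is routine given the tools already assembled in the paper.
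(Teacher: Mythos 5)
Your proposal is correct and follows essentially the same route as the paper: dominate $G_i(t)$ by a Poisson variable of mean $\kappa t$ via the bound $\zeta(\cdot)\le\kappa$ from \eqref{mf_jump_rate} (in the paper this is read off directly from graphical construction 2 as $\Psi\big|_{\cdot\times[0,\kappa]\times\{i\}}$), then apply the Poisson concentration bound of Lemma~\ref{concentrationPoisson} with $c_0$ large. Your extra care about the quantifier structure — using the additive $d\log n$ in $t$ to guarantee $\Ebb Z = \Omega(\log n)$, and choosing $B$ large depending on $d,\kappa,\delta$ — is exactly what the paper's phrase ``for $c_0$ large enough'' is hiding, so nothing is missing.
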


\begin{proof}
    We use the graphical construction 2. Since $\zeta(t) < \kappa$ at all time (see \eqref{mf_jump_rate}), $G_i$ is simply dominated by the Poisson process $\Psi\big|_{\cdot \times[0, \kappa] \times \{i\}}$. Consequently, at time $t = (1+\delta/4)(R(\norminfty{x})+ d\logn)$, $G_i(t)$ is stochastically dominated by a random variable $Y \sim $ $\textrm{Poisson}\Big(\kappa (1+\delta/4) (R(\norminfty{x})+ d\logn)\Big)$. Then the result is simply obtained by Lemma \ref{concentrationPoisson}, for $c_0$ large enough.
\end{proof}
For any site $i$, Lemma \ref{timedescend} says that arrivals can only accelerate the loss $L_i$, while Lemma \ref{bound_gain} gives us a (random) upper bound on $G_i$. They together lead to the following proposition:
\begin{proposition}[First phase dissolution]\label{first_phase_dissolution}
    Let $d = \dfrac{7}{r(1)}$, and let $c_0$ be defined as in Lemma \ref{bound_gain}. Then for any $x \in \Omega$, \orange{for any $\delta \in (0,1)$ fixed,} for $t = (1+ \delta/4)(R(\norminfty{x}) + d\logn)$, 
    \begin{equation}
        \probawithstartingpoint{x}{\norminfty{X(t)} \leq c_0(R(\norminfty{x})\vee \logn)} = 1 - \Ocal{n^{-2}}.    
    \end{equation}
\end{proposition}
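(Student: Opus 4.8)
The plan is to apply a union bound over all $n$ sites, showing that each site individually is low at time $t$ with probability $1 - \Ocal{n^{-3}}$. Fix a site $i$. On the favorable event of Lemma \ref{bound_gain}, the gain satisfies $G_i(t) \leq c_0(R(\norminfty{x})\vee\logn)$; this event has probability $1 - \Ocal{n^{-5}}$. Write $h := \norminfty{x} \geq x_i$. I would like to conclude that, on this event, $X_i(t) \leq c_0(R(\norminfty{x})\vee\logn)$, because the site loses at least roughly $h$ particles by time $t$ while gaining at most $c_0(R(\norminfty{x})\vee\logn)$ — but one has to be a bit careful, since once the site gets low the loss rate drops and the quantitative statement of Lemma \ref{timedescend} degrades.

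Here is the key mechanism. Apply Lemma \ref{timedescend} with this site $i$ and $h = x_i$, taking $k = h - 1$ (or stopping a bit earlier, see below). The lemma gives independent variables $U_j \sim \exp\!\left(\tfrac{n-1}{n}r(j)\right)$ for $h \geq j \geq 1$ such that $T_{k} = U_h + \cdots + U_{h-k}$ is a stopping time and $L_i(T_{h-k-1}) \geq h - k$ once the site has descended that far. The total time to force the loss up to level $h$ is stochastically dominated by $\tfrac{n}{n-1} S_h$ where $S_h = \sum_{j=1}^h \xi_j$, $\xi_j \sim \exp(r(j))$, so $\Ebb S_h = R(h) = R(\norminfty{x})$. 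I would argue: with probability $1 - \Ocal{n^{-3}}$, by time $t$ the site $i$ has already dropped to height at most $c_0(R(\norminfty{x})\vee\logn)$ \emph{at least once}; combined with the gain bound $G_i(t) \leq c_0(R(\norminfty{x})\vee\logn)$ and the identity $X_i(t) = X_i(s) + (G_i(t)-G_i(s)) - (L_i(t)-L_i(s)) \leq X_i(s) + G_i(t)$, this would pin $X_i(t)$ at $\Ocal{R(\norminfty{x})\vee\logn}$. Concretely: pick the constant $d = 7/r(1)$ so that the "safety margin" $(\delta/4)(R(\norminfty{x}) + d\logn) + d\logn \geq (7/r(1))\logn$ is enough to absorb the fluctuations of $S_h$; then use the concentration inequality \eqref{concentration} for the sum of exponentials with parameter $\lambda = \min_j r(j) = r(1)$, giving $\proba{S_h - R(\norminfty{x}) \geq r(1)\var{S_h} + B/r(1)} \leq e^{-B/2}$, and take $B = \Theta(\logn)$ to make this $\Ocal{n^{-3}}$. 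Since $\var{S_h} \leq \var{S_\infty} = \sum 1/r(j)^2 < \infty$ is dimension-free (or more simply $\var{S_h} = o(R(h))$ by \eqref{ratioexpectationvar}), the term $r(1)\var{S_h}$ is a dimension-free constant, comfortably covered by the margin.

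I would organize the write-up as: (i) fix $i$, set $h = x_i \leq \norminfty{x}$; if $h \leq c_0(R(\norminfty{x})\vee\logn)$ already then the gain bound alone finishes it via $X_i(t) \leq h + G_i(t)$, so assume $h$ is large; (ii) by Lemma \ref{timedescend} the first time the loss $L_i$ reaches $h - \lceil c_0(R(\norminfty{x})\vee\logn)\rceil$ is dominated by $\tfrac{n}{n-1}(S_h - S_{\lceil c_0 (\cdots)\rceil}) \leq \tfrac{n}{n-1} S_h$, and on $\{t \geq (1+\delta/4)(R(\norminfty{x})+d\logn)\}$ this happens before time $t$ with probability $1 - \Ocal{n^{-3}}$ by \eqref{concentration}; (iii) at that moment the site is at height $\leq c_0(R(\norminfty{x})\vee\logn)$, and on the Lemma \ref{bound_gain} event the subsequent net gain is at most $c_0(R(\norminfty{x})\vee\logn)$, so $X_i(t) \leq 2c_0(R(\norminfty{x})\vee\logn)$ — then just rename the constant $c_0$ (or note Lemma \ref{bound_gain}'s $c_0$ can be chosen with this factor of $2$ built in); (iv) union bound over $i \in [n]$, turning $\Ocal{n^{-3}}$ into $\Ocal{n^{-2}}$.

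The main obstacle is step (iii): the quantitative guarantees of Lemma \ref{timedescend} (points 2 and 3) are stated for descending from $h$ down to $h-k$, and the random times $T_k$ there use rates $r(h), r(h-1), \ldots$ — so I must make sure I only use the lemma to reach the level $c_0(R(\norminfty{x})\vee\logn)$, not all the way to $0$, because below that level the sum of exponentials $\sum_j 1/r(j)$ with small $j$ contributes an $\Omega(\logn)$-sized (indeed potentially larger) amount of time that the margin $d\logn$ with $d = 7/r(1)$ is precisely calibrated \emph{not} to cover for arbitrarily low targets. The choice $d = 7/r(1)$ strongly suggests the intended bookkeeping keeps the target at a bounded-height-times-$\logn$ level and charges at most $\Theta(\logn/r(1))$ of exponential-sum time plus a $\Theta(\logn/r(1))$ concentration slack; making these constants line up — i.e. checking $7/r(1)$ really suffices after accounting for the $\delta/4$ loss, the $n/(n-1)$ factor, and the $e^{-B/2}$ tail — is the one genuinely delicate computation.
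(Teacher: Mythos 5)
Your approach is the same as the paper's: Lemma~\ref{bound_gain} controls the gain, Lemma~\ref{timedescend} together with the concentration bound \eqref{concentration} controls the loss, and a union bound over sites finishes. The one place you deviate --- truncating the descent at an intermediate level $\ell\approx c_0(R(\norminfty{x})\vee\logn)$ instead of descending all the way --- is an unnecessary complication driven by a misreading of the budget. The expected time contributed by the low levels, $\sum_{j\le\ell}1/r(j)=R(\ell)$, is already contained in the main term $R(\norminfty{x})$ of $t$; it is not an extra cost that the margin $d\logn$ must absorb, so there is no reason to avoid descending to zero. The paper simply applies Lemma~\ref{timedescend} with $k=x_i-1$, so $T_{x_i-1}\overset{(d)}{=}\tfrac{n}{n-1}S_{x_i}$, then uses \eqref{concentration} with $B=6\logn$, $\lambda=r(1)$, and $\var{S_{\norminfty{x}}}=o(R(\norminfty{x})\vee\logn)$ from \eqref{ratioexpectationvar} to obtain $\probawithstartingpoint{x}{L_i(t)<x_i}\le n^{-3}$; on the complementary event $X_i(t)=x_i+G_i(t)-L_i(t)\le G_i(t)$, and the gain bound closes the argument with no constant rescaling. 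Two smaller corrections: your parenthetical claim that $\sum_j 1/r(j)^2<\infty$ is a dimension-free constant is not implied by \eqref{rcondition1}--\eqref{rcondition3} (take $r(k)=k^{1/3}$); the alternative $\var{S_h}=o(R(h))$ from \eqref{ratioexpectationvar} is the fact you should rely on. And in your step (iii), the assertion ``at that moment the site is at height $\le c_0(\cdots)$'' does not follow from Lemma~\ref{timedescend} alone, which lower-bounds $L_i$ but does not by itself upper-bound $X_i$; the clean accounting is $X_i(t)=x_i+G_i(t)-L_i(t)\le\ell+G_i(t)$, which is what actually produces your factor of $2$.
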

\begin{proof} 
    Let $i \in [n]$. We apply Lemma \ref{timedescend} with $h = x_i$ to conclude that there exists a stopping time $T_{x_i - 1} \overset{(d)}{=} \dfrac{n}{n-1} S_{x_i}$ such that $L_i(T_{x_i - 1}) \geq x_i$. Note that $S_{x_i}$ is dominated stochastically by $S_{\norminfty{x}}$, hence by using \eqref{concentration}, we deduce that
    \begin{align*}
        &\proba{T_{x_i - 1} \geq \dfrac{n}{n-1}\left(\esperance{S_{\norminfty{x}}} + r(1) \var{S_{\norminfty{x}}} + \dfrac{6\logn}{r(1)}\right)}\\ &= \proba{S_{x_i} \geq \left(\esperance{S_{\norminfty{x}}} + r(1) \var{S_{\norminfty{x}}} + \dfrac{6\logn}{r(1)}\right)}\\
        &\leq \proba{S_{\norminfty{x}} \geq \left(\esperance{S_{\norminfty{x}}} + r(1) \var{S_{\norminfty{x}}} + \dfrac{6\logn}{r(1)}\right)} \\
        &\leq n^{-3}.
    \end{align*}
    Note that $\var{S_{\norminfty{x}}} = o(R(\norminfty{x})\vee \logn)$ \orange{by \eqref{ratioexpectationvar}}, and hence 
    \[\dfrac{n}{n-1}\left(\esperance{S_{\norminfty{x}}} + r(1) \var{S_{\norminfty{x}}} + \dfrac{6\logn}{r(1)}\right) < t,\]
    when $n$ is large enough. Consequently, for $n$ large enough, 
    \[\probawithstartingpoint{x}{L_i(t) < x_i} \leq \probawithstartingpoint{x}{T_{x_i-1} > t} \leq n^{-3}.\]
    We take a union bound of this and the event in \eqref{smallgain} over all sites to conclude that  $$\proba{\exists i, \, G_i(t) \geq c_0(R(\norminfty{x})\vee \logn)} + \proba{\exists i, \, L_i(t) < x_i} = \Ocal{n^{-2}}.$$ 
    The claim follows.
\end{proof}
We now recall a simple version of Gronwall's lemma that we will use a lot:
\begin{lemma}[Gronwall's lemma]\label{differentialinequality}
    Let $\alpha, \beta$ be some positive numbers. Let $u: [0, \infty) \to \Rbb^+$ be a continuously differentiable function such that $\ddt u(t) \leq -\beta u(t) + \alpha$. Then 
    \[u(t) < \dfrac{\alpha}{\beta} + \left(u(0) - \dfrac{\alpha}{\beta}\right)e^{-\beta t}.\]
    In particular, if $t \geq \dfrac{\log u(0)}{\beta}$, then $u(t) < \dfrac{\alpha}{\beta} + 1$.
\end{lemma}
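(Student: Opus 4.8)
The statement to prove is the classical Gronwall/comparison lemma: if $u \geq 0$ is $C^1$ with $u'(t) \le -\beta u(t) + \alpha$ for $\beta,\alpha > 0$, then $u(t) < \frac{\alpha}{\beta} + \bigl(u(0) - \frac{\alpha}{\beta}\bigr)e^{-\beta t}$, and consequently $u(t) < \frac{\alpha}{\beta} + 1$ once $t \ge \frac{\log u(0)}{\beta}$. Let me sketch the argument before seeing the author's.

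The plan is to use the standard integrating-factor trick. Set $v(t) := e^{\beta t}\bigl(u(t) - \frac{\alpha}{\beta}\bigr)$. Then $v$ is $C^1$ and
\[
v'(t) = \beta e^{\beta t}\Bigl(u(t) - \tfrac{\alpha}{\beta}\Bigr) + e^{\beta t} u'(t) = e^{\beta t}\bigl(\beta u(t) - \alpha + u'(t)\bigr) \le e^{\beta t}\bigl(\beta u(t) - \alpha + (-\beta u(t) + \alpha)\bigr) = 0.
\]
Hence $v$ is non-increasing, so $v(t) \le v(0) = u(0) - \frac{\alpha}{\beta}$ for all $t \ge 0$; rearranging gives $u(t) \le \frac{\alpha}{\beta} + \bigl(u(0) - \frac{\alpha}{\beta}\bigr)e^{-\beta t}$. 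To get the strict inequality one can either note that the hypothesis will in applications be strict, or observe that if equality held at some $t_0 > 0$ then $v$ would be constant on $[0,t_0]$, forcing $u'(s) = -\beta u(s) + \alpha$ there, which is fine — so strictness genuinely needs a strict hypothesis somewhere; I would simply state the inequality as $\le$ and remark that in every application the bound is used with room to spare, or alternatively absorb a harmless $+\varepsilon$. (The author writes $<$, so presumably they either invoke strictness of the differential inequality in practice or are slightly informal here; I would mirror whatever convention keeps later proofs clean.)

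For the second assertion, observe that when $u(0) \le \frac{\alpha}{\beta}$ the first part already gives $u(t) < \frac{\alpha}{\beta} \le \frac{\alpha}{\beta} + 1$ for all $t \ge 0$, so assume $u(0) > \frac{\alpha}{\beta}$. Then $\bigl(u(0) - \frac{\alpha}{\beta}\bigr)e^{-\beta t} \le u(0) e^{-\beta t}$, and for $t \ge \frac{\log u(0)}{\beta}$ we have $e^{-\beta t} \le 1/u(0)$, whence $u(0) e^{-\beta t} \le 1$. Plugging into the first part yields $u(t) < \frac{\alpha}{\beta} + 1$, as claimed.

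There is essentially no obstacle here — this is a textbook lemma and the only mild subtlety is the strict-versus-non-strict inequality, which is cosmetic. The one thing to be careful about is that the hypothesis is stated for a genuinely differentiable $u$ on all of $[0,\infty)$, so no issues of absolute continuity or one-sided derivatives arise, and the integrating-factor computation is valid verbatim.
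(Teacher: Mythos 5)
The paper does not actually supply a proof of this lemma: it is stated as a classical fact and used directly, so there is no in-paper argument to compare against. Your integrating-factor computation with $v(t) = e^{\beta t}\bigl(u(t) - \alpha/\beta\bigr)$ is the standard proof and is correct. You are also right to flag the strict inequality: with only the hypothesis $u'(t) \le -\beta u(t) + \alpha$ one can conclude no more than $u(t) \le \frac{\alpha}{\beta} + \bigl(u(0) - \frac{\alpha}{\beta}\bigr)e^{-\beta t}$, with equality attained when the differential inequality is an equality; so the paper's $<$ in the first display is a harmless imprecision. Your treatment of the second conclusion is clean: in the only nontrivial case $u(0) > \alpha/\beta$, one has $\bigl(u(0) - \frac{\alpha}{\beta}\bigr)e^{-\beta t} < u(0)e^{-\beta t} \le 1$ once $t \ge \frac{\log u(0)}{\beta}$, the first inequality strict because $\alpha/\beta > 0$, so the conclusion $u(t) < \frac{\alpha}{\beta} + 1$ does hold strictly even though the first display is only $\le$. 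Nothing is missing; this matches how the lemma is invoked later in the paper, which always leaves slack.
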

For $\theta$ a positive number, and for $i \in [n]$, we define the observable $\phiitheta: \, \Omega \to \Rbb $ by 
\begin{align}
    \phiitheta(x) = e^{\theta x_i},
\end{align}
and we define the observable $\phitheta$ by 
\begin{align*}
    \phitheta(x) = \dfrac{1}{n} \sum_{i = 1}^n \phiitheta(x).
\end{align*}

\begin{lemma}[Estimate on $\Lcal \phi^\theta$]\label{boundL}
    For any dimension-free constants $\theta, \beta > 0$, there exists a number $L = L(\theta, \beta)$  such that, \orange{for any configuration $x$,}
    \begin{align}
        &\Lcal \phi^\theta(x) \leq -\beta \phi^\theta(x) \indicator{\phi^\theta(x) >L} + (e^\theta - 1)\kappa \phi^\theta(x) \indicator{\phi^\theta(x) \leq L}\label{boundLphitheta}.
    \end{align}
    In particular, 
    \begin{align}\label{boundLphitheta2}
        \Lcal \phi^\theta(x) \leq -\beta \phi^\theta(x) + ((e^{\theta} - 1)\kappa + \beta) L.
    \end{align}
\end{lemma}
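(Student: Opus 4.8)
The plan is to compute $\Lcal\phi^\theta(x)$ exactly from the generator \eqref{generator} and then bound the resulting expression. Applying $\Lcal$ to $\phi_i^\theta(x) = e^{\theta x_i}$, a particle leaving site $i$ (rate $r(x_i)$, happening with probability $(n-1)/n$ of changing $x_i$) multiplies the term $e^{\theta x_i}$ by $e^{-\theta}-1$, while a particle arriving at site $i$ from some site $j\neq i$ (total rate $\frac1n\sum_{j\neq i}r(x_j)$) multiplies it by $e^{\theta}-1$. Averaging over $i$, one gets an identity of the shape
\begin{align*}
    \Lcal\phi^\theta(x) = \frac1n\sum_{i=1}^n e^{\theta x_i}\left[(e^{-\theta}-1)\frac{n-1}{n}r(x_i) + (e^{\theta}-1)\cdot\frac1n\sum_{j\neq i} r(x_j)\right].
\end{align*}
The second bracketed term is at most $(e^\theta-1)\zeta(x)\phi^\theta(x) \le (e^\theta-1)\kappa\,\phi^\theta(x)$ by \eqref{mf_jump_rate}, uniformly in $x$; this is the harmless ``arrival'' contribution. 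The real work is to show the negative ``departure'' term dominates when $\phi^\theta(x)$ is large.

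The key observation is that if $\phi^\theta(x)=\frac1n\sum_i e^{\theta x_i}$ is large, then some site must be tall, so $\max_i x_i$ is large, so by \eqref{rcondition2} the rate $r(\max_i x_i)$ is large; and the contribution of that tallest site to $\phi^\theta(x)$ is already a constant fraction (namely $\ge 1/n$ of the sum, but more usefully $e^{\theta\max x_i}\ge \phi^\theta(x)$) of the whole. Concretely, writing $k^\star=\max_i x_i$, the departure term is at most $-(1-e^{-\theta})\frac{n-1}{n}\cdot\frac1n r(k^\star)e^{\theta k^\star}$, and since $e^{\theta k^\star}\ge \phi^\theta(x)$ while also $e^{\theta k^\star}\ge$ a fixed fraction of $\phi^\theta(x)$... one sees that this term is $\le -c\, r(k^\star)\,\phi^\theta(x)$ for a dimension-free $c$ (using $n\ge 2$). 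Now choose $L=L(\theta,\beta)$ large enough that $\phi^\theta(x)>L$ forces $k^\star$ large enough that $c\,r(k^\star)\ge \beta+(e^\theta-1)\kappa$; this is possible precisely because $r(k)\to\infty$. Then on $\{\phi^\theta(x)>L\}$ we get $\Lcal\phi^\theta(x)\le \bigl(-c\,r(k^\star)+(e^\theta-1)\kappa\bigr)\phi^\theta(x)\le -\beta\phi^\theta(x)$, and on $\{\phi^\theta(x)\le L\}$ we simply drop the negative term and keep the arrival bound $(e^\theta-1)\kappa\phi^\theta(x)$, giving \eqref{boundLphitheta}. Finally \eqref{boundLphitheta2} follows from \eqref{boundLphitheta} by noting that on $\{\phi^\theta(x)\le L\}$ we have $(e^\theta-1)\kappa\phi^\theta(x) \le -\beta\phi^\theta(x) + (\beta+(e^\theta-1)\kappa)L$, and on $\{\phi^\theta(x)>L\}$ the inequality $-\beta\phi^\theta(x)\le -\beta\phi^\theta(x)+(\beta+(e^\theta-1)\kappa)L$ is trivial.

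The main obstacle is getting the departure term's lower bound on $r$-rate into a form proportional to $\phi^\theta(x)$ with a dimension-free constant: one must be careful that a single tall site contributes enough mass to the average $\phi^\theta$, and handle the $(n-1)/n$ and $1/n$ factors so that the final constant $c$ does not degrade as $n\to\infty$ (it does not, since $n\ge 2$ gives $(n-1)/n\ge 1/2$ and the tallest site alone gives $e^{\theta k^\star}\ge\phi^\theta(x)\cdot\frac1n\cdot n$... more precisely $e^{\theta k^\star}\ge\phi^\theta(x)$ directly since it is the max of terms whose average is $\phi^\theta(x)$ — wait, the max is $\ge$ the average, so $e^{\theta k^\star}\ge \phi^\theta(x)$, which is exactly what is needed). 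Once that proportionality is in hand, the choice of $L$ via \eqref{rcondition2} is routine.
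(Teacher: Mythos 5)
Your exact computation of $\Lcal\phi^\theta$ and the treatment of the arrival term are fine and match the paper. The gap is in your handling of the departure term, where you retain only the contribution of the tallest site. After averaging over $i$ the departure term is
\[
-(1-e^{-\theta})\,\dfrac{n-1}{n}\cdot\dfrac{1}{n}\sum_{i=1}^n r(x_i)e^{\theta x_i},
\]
and keeping only the $i$ achieving $k^\star=\max_i x_i$ gives an upper bound of $-(1-e^{-\theta})\dfrac{n-1}{n^2}r(k^\star)e^{\theta k^\star}$. The inequality $e^{\theta k^\star}\ge\phi^\theta(x)$ (max $\ge$ average) is true but does not repair the loss: plugging it in gives $-(1-e^{-\theta})\dfrac{n-1}{n^2}r(k^\star)\phi^\theta(x)$, and the prefactor $\tfrac{n-1}{n^2}$ vanishes as $n\to\infty$. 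So your constant $c$ in ``$\le -c\,r(k^\star)\phi^\theta(x)$'' is \emph{not} dimension-free. What you would actually need is $e^{\theta k^\star}\gtrsim n\,\phi^\theta(x)$, i.e.\ the tallest site carrying a constant fraction of the \emph{sum} $\sum_i e^{\theta x_i}$; this fails whenever many sites share the maximum (e.g.\ $x_i\equiv c$ for all $i$, where $e^{\theta k^\star}=\phi^\theta(x)$ exactly).

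The paper avoids the issue by not throwing away the other sites. Using monotonicity of $r$, for any fixed $c\in\Zbb_+$ and every $i$ one has $r(x_i)e^{\theta x_i}\ge r(c)\bigl(e^{\theta x_i}-e^{\theta c}\bigr)$ (trivially when $x_i<c$ since the right side is then nonpositive, and by $r(x_i)\ge r(c)$ when $x_i\ge c$). Summing over $i$ gives $\sum_i r(x_i)e^{\theta x_i}\ge r(c)\bigl(n\phi^\theta(x)-ne^{\theta c}\bigr)$, and the factor $n$ now cancels the $\tfrac1n$ in the average, producing a genuinely dimension-free bound $\Lcal\phi^\theta(x)\le (e^\theta-1)\kappa\phi^\theta(x)-(1-e^{-\theta})r(c)(\phi^\theta(x)-e^{\theta c})$. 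Choosing $L=2e^{\theta c}$ makes $\phi^\theta(x)-e^{\theta c}>\tfrac12\phi^\theta(x)$ on $\{\phi^\theta>L\}$, and then $c$ is taken large enough via \eqref{rcondition2}. Your argument should be repaired by replacing the single-site bound with this all-sites bound; the rest of your plan (choice of $L$, deduction of \eqref{boundLphitheta2}) is correct.
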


\begin{proof}
        For simplicity, we write $\phi$ instead of $\phi^\theta$ and $\phi_i$ instead of $\phi_i^\theta$.
         It is not difficult to see that \begin{align*}
            \dfrac{\Lcal \phi_i(x)}{\phi_i(x)} &=  \dfrac{e^\theta - 1}{n}\sum\limits_{j \in [n] \setminus \{i\}}r(x_j) - \dfrac{1 - e^{-\theta}}{n}\sum\limits_{j \in [n] \setminus \{i\}}r(x_i)\\
            &= \dfrac{e^\theta - 1}{n}\sum\limits_{j \in [n] }r(x_j) - \left((1- e^{-\theta})\dfrac{n-1}{n} + \dfrac{e^\theta - 1}{n} \right)r(x_i).\\
        \end{align*} 
        Hence\orange{, by \eqref{mf_jump_rate}}, \begin{align*}
            \Lcal \phi_i(x) &\leq (e^\theta - 1)\kappa\phi_i(x) - (1 - e^{-\theta})\phi_i(x) r(x_i).
        \end{align*}
        Taking the average over all sites $i$ we get 
        \begin{align*}
            \Lcal \phi(x) \leq (e^\theta - 1) \kappa \phi(x) - (1 - e^{-\theta}) \dfrac{\sum_{i \in [n]} r(x_i) \phi_i(x)}{n}.
        \end{align*}
        The claim follows when $\phi(x) \leq L$. It remains to consider the case $\phi(x) > L$. For any $c \in \Zbb_+$, $r(x_i) \phi_i(x) \geq r(c)(\phi_i(x) - e^{\theta c})$ due to the monotonicity of $r$, hence 
        \begin{align*}
            \sum_{i \in [n]} r(x_i) \phi_i(x) &\geq r(c) \sum_{i \in [n]} (\phi_i(x) - e^{\theta c})\\
            &\geq r(c)(n \phi(x) - n e^{\theta c}).
        \end{align*}
        Consequently, 
        \begin{align*}
            \Lcal \phi(x) \leq (e^\theta - 1) \kappa \phi(x) - (1 - e^{-\theta}) r(c)(\phi(x) - e^{\theta c}).
        \end{align*}
        Let $L = L(c) = 2e^{\theta c}$. If $\phi(x) > L$, then $\phi(x) - e^{\theta c} > \dfrac{\phi(x)}{2}$, which implies:
        \begin{align}
            \Lcal \phi(x) \leq (e^\theta - 1) \kappa \phi(x) - (1 - e^{-\theta}) r(c) \dfrac{\phi(x)}{2}.
        \end{align}
        We can take $c$ large enough to make the right-hand side of the inequality above smaller than $-\beta \phi(x)$, which finishes the proof \orange{of \eqref{boundLphitheta}}. \orange{\eqref{boundLphitheta2} is obtained by rewriting \eqref{boundLphitheta} as follows, 
        \begin{align*}
            \Lcal \phi(x) &\leq -\beta \phi(x) (1 - \indicator{\phi(x) \leq L}) + (e^\theta - 1)\kappa \phi(x) \indicator{\phi(x) \leq L}\\
            &= -\beta \phi(x) + ((e^\theta - 1)\kappa + \beta)\phi(x)\indicator{\phi(x) \leq L}\\
            &\leq -\beta \phi(x) + ((e^\theta - 1)\kappa + \beta)L,
        \end{align*}
        which is what we want.}
\end{proof}
A good estimate on $\Lcal \phi^\theta$ will guarantee good behavior of the trajectories of $X$, as stated in the following proposition.
\begin{proposition}[Dissolution]\label{concentration_and_drift} Let $\theta, \beta > 0$ be some dimension-free constants. Then for any $x \in \Omega$, for any $t\geq\dfrac{\theta}{\beta} \norminfty{x}$, 
        \begin{align}\label{logngood}
            \probawithstartingpoint{x}{\norminfty{X(t)} \geq \dfrac{6}{\theta} \logn} = \Ocal{ n^{-5}}.
        \end{align}
\end{proposition}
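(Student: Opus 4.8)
The plan is to control the exponential observable $\phi^\theta(X(t)) = \frac{1}{n}\sum_i e^{\theta X_i(t)}$ along the trajectory, using the martingale decomposition \eqref{martingale_formula} together with the drift estimate \eqref{boundLphitheta2} from Lemma \ref{boundL}, and then feed the resulting differential inequality into Gronwall's lemma (Lemma \ref{differentialinequality}). Concretely, write $\phi = \phi^\theta$ and $M(t) = \phi(X(t)) - \phi(x) - \int_0^t \Lcal\phi(X(u))\,du$; this is a zero-mean martingale. Taking expectations and using \eqref{boundLphitheta2}, the deterministic function $u(t) := \esperancewithstartingpoint{x}{\phi(X(t))}$ satisfies $\ddt u(t) \leq -\beta u(t) + ((e^\theta-1)\kappa + \beta)L$, so by Gronwall $u(t) < \frac{((e^\theta-1)\kappa+\beta)L}{\beta} + 1 =: C$ once $t \geq \frac{\log u(0)}{\beta}$. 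Since $u(0) = \phi(x) \leq e^{\theta\norminfty{x}}$, the condition $\frac{\log u(0)}{\beta} \leq \frac{\theta}{\beta}\norminfty{x}$ holds, so for all $t \geq \frac{\theta}{\beta}\norminfty{x}$ we have $\esperancewithstartingpoint{x}{\phi(X(t))} = \Ocal{1}$.

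From the bound on the expectation I would pass to a tail bound: since $\phi(X(t)) = \frac{1}{n}\sum_i e^{\theta X_i(t)} \geq \frac{1}{n}e^{\theta \norminfty{X(t)}}$, Markov's inequality gives
\begin{align*}
    \probawithstartingpoint{x}{\norminfty{X(t)} \geq \tfrac{6}{\theta}\logn}
    = \probawithstartingpoint{x}{e^{\theta\norminfty{X(t)}} \geq n^6}
    \leq \probawithstartingpoint{x}{\phi(X(t)) \geq n^5}
    \leq \frac{\esperancewithstartingpoint{x}{\phi(X(t))}}{n^5} = \Ocal{n^{-5}},
\end{align*}
which is exactly \eqref{logngood}.

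The main subtlety — and the place where some care is needed — is the passage from the pointwise drift inequality \eqref{boundLphitheta2} to the differential inequality for $u(t)$. One must justify that $t \mapsto \esperancewithstartingpoint{x}{\phi(X(t))}$ is (absolutely) continuous and that $\ddt \esperancewithstartingpoint{x}{\phi(X(t))} = \esperancewithstartingpoint{x}{\Lcal\phi(X(t))}$, i.e. that one may differentiate under the expectation and that the martingale $M$ is genuinely mean-zero here. This requires an integrability/non-explosion argument: a priori $\phi$ is unbounded on $\Omega$, but $\Omega$ itself is finite for each fixed $n$ (since $\sum_i x_i = m$), so $\phi$ is bounded and all these manipulations are legitimate — alternatively one invokes that the number of jumps in $[0,t]$ has finite exponential moments because, by \eqref{mf_jump_rate}, the total jump rate is at most $n\kappa$. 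I would also note that the constant hidden in $\Ocal{n^{-5}}$ depends only on $\theta,\beta,\kappa$ (hence is dimension-free), and that Gronwall requires a continuously differentiable $u$, which holds because between jumps $X$ is constant and the jump rates are bounded, making $u$ smooth. Everything else is a routine substitution.
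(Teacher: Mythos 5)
Your proof is correct and follows essentially the same route as the paper: apply \eqref{boundLphitheta2} to get $\ddt \esperancewithstartingpoint{x}{\phi^\theta(X(t))} \leq -\beta\,\esperancewithstartingpoint{x}{\phi^\theta(X(t))} + ((e^\theta-1)\kappa+\beta)L$, use Gronwall (Lemma \ref{differentialinequality}) to conclude $\esperancewithstartingpoint{x}{\phi^\theta(X(t))} = \Ocal{1}$ once $t\geq\frac{\theta}{\beta}\norminfty{x}$, then apply Markov's inequality via $e^{\theta\norminfty{X(t)}}\leq n\,\phi^\theta(X(t))$. The paper compresses the last step to ``Chernoff's bound,'' but your Markov-inequality spelling-out is exactly what is meant, and your remark that differentiation under the expectation is licit because $\Omega$ is finite is a reasonable (if not strictly necessary, given the paper's implicit use of \eqref{martingale_formula}) justification.
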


\begin{proof}
    We still write $\phi$ instead of $\phi^\theta$, for simplicity. Let $L = L(\theta, \beta)$ be defined as in Lemma \ref{boundL}.
        Let $u(t) = \esperancewithstartingpoint{x}{\phi(X(t))}$. By \eqref{martingale_formula}, 
        \begin{align*}
            \ddt u(t) &= \esperancewithstartingpoint{x}{\Lcal \phi(X(t))}.
        \end{align*}
        This and \eqref{boundLphitheta2} imply:
        \begin{align*}
            \ddt u(t) \leq -\beta u(t) + \orange{((e^\theta - 1)\kappa + \beta)}L.
        \end{align*}
        Therefore, by Lemma \ref{differentialinequality},
        \begin{align*}
            u(t) \leq \dfrac{\orange{((e^\theta - 1)\kappa + \beta)}L}{\beta} + u(0) e^{-\beta t}.
        \end{align*}
        Hence for $t \geq \dfrac{\theta}{\beta} \norminfty{x}$, $u(t) \leq \dfrac{\orange{((e^\theta - 1)\kappa + \beta)}L}{\beta} + 1$.  
        Note that $e^{\theta \norminfty{x}} \leq n\phi(x)$, hence $\esperance{e^{\theta \norminfty{X(t)}}} \leq nu(t)$. Then the claim is a simple consequence of Chernoff's bound.
\end{proof} 
\deleted{Now we fix a number $\theta_1$, and we choose $\beta_1$ large enough such that $\dfrac{\theta_1 c_0}{\beta_1} \leq \dfrac{\delta}{2}$. Let $\alpha_1 = \dfrac{6}{\theta_1}$ and $L_1 = L(\theta_1, \beta_1)$ as in Lemma \ref{boundL}.} We \deleted{can} now prove Proposition \ref{key_proposition}:
\begin{proof}[Proof of Proposition \ref{key_proposition}]
    \orange{We fix $\delta \in (0,1)$. Let $c_0$ be defined as in Lemma \ref{bound_gain}. Let $\theta_1 > 0$ be fixed, and let $\beta_1$ be such that $\dfrac{\theta_1 c_0}{\beta_1} \leq \dfrac{\delta}{2}$. Let $L_1 = L(\theta_1, \beta_1)$ as in Lemma \ref{boundL}.} Let $t_1 = (1 + \delta/4)(R(\norminfty{x}) + d\logn), \; t_2 = \dfrac{\delta}{2}(R(\norminfty{x})\vee \logn)$. By definition of $\beta_1$ and Proposition \ref{first_phase_dissolution},
    \[\probawithstartingpoint{x}{\dfrac{\theta_1\norminfty{X(t_1)}}{\beta_1} \leq t_2} = 1 - \Ocal{n^{-2}}.\]
    \orange{By Markov property at time $t_1$ and inequality \eqref{logngood} 
    , we conclude that for any $t \geq t_1 + t_2,$
    \[\probawithstartingpoint{x}{\norminfty{X(t)} \geq \dfrac{6}{\theta_1}\logn} = \Ocal{n^{-2}}.\]
    We choose $\alpha_1 = \dfrac{6}{\theta_1}$ and $\sigma = \dfrac{5}{4}d + \dfrac{1}{2}$ to conclude the proof.}
\end{proof}
The estimate on $\Lcal \phitheta$ in Lemma \ref{boundL} also ensures that the system quickly reaches the set where $\phitheta$ is small.
\begin{proposition}[Exponential moment of hitting time]\label{moment_hitting_time}
    Let $\theta, \beta > 0$ be some dimension-free constants, and let $L = L(\theta, \beta)$ be defined as in Lemma \ref{boundL}.
    Let $T$ be the hitting time of the set $\{x \in \Omega: \phi^\theta(x) \leq L\}$. Then for any $x \in \Omega$,
        \begin{align}\label{tempsdattentelaplace}
            \esperancewithstartingpoint{x}{e^{\beta T}} \leq e^\theta \phi^\theta(x) /L.
        \end{align}
\end{proposition}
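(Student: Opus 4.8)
The plan is to exhibit $e^{\beta t}\phi^\theta(X(t))$, stopped at $T$, as a nonnegative supermartingale (its drift is nonpositive before $T$ precisely by the first branch of \eqref{boundLphitheta}), and then to extract \eqref{tempsdattentelaplace} by combining the supermartingale inequality evaluated at $T$ with a crude deterministic lower bound on how far $\phi^\theta$ can fall in a single jump. Since $\Omega$ is finite for each fixed $n$, $\phi^\theta$ is bounded and the martingale in \eqref{martingale_formula} is a genuine martingale, so there are no integrability subtleties to worry about. Throughout we may assume $\phi^\theta(x) > L$, since otherwise $T = 0$ and the statement is immediate.

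First I would build the supermartingale. Applying the martingale identity \eqref{martingale_formula} to $\phi = \phi^\theta$ and integrating against the smooth, finite-variation factor $e^{\beta t}$ (equivalently, applying Dynkin's formula to the space--time function $(t,y)\mapsto e^{\beta t}\phi^\theta(y)$, whose space--time generator contributes $e^{\beta t}(\beta\phi^\theta(y)+\Lcal\phi^\theta(y))$), one gets, for every $t\ge 0$,
\[
e^{\beta (t\wedge T)}\,\phi^\theta(X(t\wedge T)) \;=\; \phi^\theta(x) \;+\; N_t \;+\; \int_0^{t\wedge T} e^{\beta s}\bigl(\beta\,\phi^\theta(X(s)) + \Lcal\phi^\theta(X(s))\bigr)\,\drm s,
\]
where $(N_t)$ is a zero-mean martingale. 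For $s < T$ we have $\phi^\theta(X(s)) > L$, so \eqref{boundLphitheta} gives $\Lcal\phi^\theta(X(s)) \le -\beta\phi^\theta(X(s))$, whence the integrand is $\le 0$. Taking expectations, $N^\star_t := e^{\beta(t\wedge T)}\phi^\theta(X(t\wedge T))$ satisfies $\Ebb_x[N^\star_t] \le \phi^\theta(x)$, and the same argument conditionally shows $(N^\star_t)$ is a nonnegative supermartingale.

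Next I would pass to the limit and finish. A nonnegative supermartingale converges almost surely; but on $\{T=\infty\}$ one has $N^\star_t = e^{\beta t}\phi^\theta(X(t)) \ge e^{\beta t}\to\infty$ (using $\phi^\theta \ge 1$), forcing $\Pbb_x(T=\infty)=0$ — alternatively this also follows from finiteness of $\Omega$, irreducibility, and nonemptiness of $\{\phi^\theta\le L\}$. Hence $N^\star_t \to e^{\beta T}\phi^\theta(X(T))$ a.s., and Fatou's lemma yields $\Ebb_x\bigl[e^{\beta T}\phi^\theta(X(T))\bigr] \le \phi^\theta(x)$. It remains to bound $\phi^\theta(X(T))$ from below: a single jump $y\mapsto y-\delta_i+\delta_j$ changes $\phi^\theta$ by $\tfrac1n e^{\theta y_i}(e^{-\theta}-1) + \tfrac1n e^{\theta y_j}(e^{\theta}-1)$, where the second summand is $\ge 0$ and the first is $\ge -(1-e^{-\theta})\phi^\theta(y)$ because $\tfrac1n e^{\theta y_i}\le\phi^\theta(y)$; therefore $\phi^\theta(y-\delta_i+\delta_j)\ge e^{-\theta}\phi^\theta(y)$. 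Since $\phi^\theta(X(T-)) > L$ (the chain sits outside the target set just before $T$), this gives $\phi^\theta(X(T)) \ge e^{-\theta}L$, and substituting into the Fatou bound yields $e^{-\theta}L\,\Ebb_x[e^{\beta T}] \le \phi^\theta(x)$, which is \eqref{tempsdattentelaplace}.

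The only genuinely delicate point is the first step: making rigorous that the stopped process is a true supermartingale, i.e. that the martingale part $(N_{t\wedge T})$ really has zero mean after stopping. This is exactly where finiteness of the state space does the work — the compensator $\int\Lcal\phi^\theta(X(s))\,\drm s$ and the jump sizes of $\phi^\theta(X(s))$ are uniformly bounded on bounded time intervals — so optional stopping applies with no extra hypotheses; everything after that is bookkeeping with Fatou and the elementary one-jump estimate.
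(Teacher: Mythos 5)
Your proof is correct and follows essentially the same route as the paper's: you build the stopped nonnegative supermartingale $e^{\beta(t\wedge T)}\phi^\theta(X(t\wedge T))$ from \eqref{martingale_formula} and the first branch of \eqref{boundLphitheta}, send $t\to\infty$ via Fatou to get $\Ebb_x[e^{\beta T}\phi^\theta(X(T))]\le\phi^\theta(x)$, and finish with the single-jump bound $\phi^\theta(X(T))\ge e^{-\theta}\phi^\theta(X(T-))\ge e^{-\theta}L$. The one small divergence is your justification that $T<\infty$ almost surely: you derive it from the a.s.\ convergence of the nonnegative supermartingale together with $\phi^\theta\ge 1$, whereas the paper first observes that any minimizer of $\phi^\theta$ satisfies $\Lcal\phi^\theta\ge 0$, hence lies in $\{\phi^\theta\le L\}$ by \eqref{boundLphitheta}, and then invokes irreducibility and finiteness of $\Omega$. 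Your version is a valid and slightly more self-contained alternative, since it doesn't require separately establishing nonemptiness of the target set.
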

\begin{proof}
        We write $\phi$ instead of $\phitheta$.
        The idea is that if $\phi$ is large, then the drift $\Lcal \phi$ is negative, and its magnitude is of the same order as $\phi$. Hence the system will quickly reach the set where $\phi$ is small, which will be made precise by stochastic calculus. Consider the function $F: \Rbb^2 \to \orange{\Rbb} $ defined by $F(u,v) = uv$, which is twice continuously differentiable. By \eqref{martingale_formula},
        \[\phi(X(t)) = \phi(X(0)) + \int_0^t \Lcal \phi(X(u)) \textrm{d} u + M(t),\]
        where \orange{$M(t)$} is a martingale. Moreover, $\phi(X)$ is a pure-jump process since $X$ is piece-wise constant. For a càdlàg process $Y$, we denote by $\Delta Y$ its jumps: $\Delta Y(s) = Y(s) - Y(s-)$. We \deleted{note} \orange{define} $G(t) = F(e^{\beta t}, \phi(X(t)))$. Note that the function $t \mapsto e^{\beta t}$ has bounded variation. Consequently, applying Itô's formula (for example, see \orange{Theorem 33 in chapter 2 of} \cite{Pro2005}) to the function $F$ and the semi-martingales $t \mapsto \phi(X(t))$ and  $t \mapsto e^{\beta t}$, we get: 
        \begin{equation}\label{formule_ito}
            \begin{split}
             G(t) = &\phi(X(0)) + \int_0^t e^{\beta u } \Lcal \phi(X(u)) \textrm{d} u + \int_0^t e^{\beta u} \drm M(u) + \int_0^t \phi(X(u)) \beta e^{\beta u} \drm u\\
                &+ \sum_{0 \leq s \leq t} \left[ \Delta G(s) - \dfrac{\partial F}{\partial v}\left(e^{\beta s-}, \phi(X(s-))\right) \cdot \Delta \phi(X(s))  \right].
            \end{split}
        \end{equation}
        On the other hand, as $\dfrac{\partial F}{\partial v}(u,v) = u$ and the function $t \mapsto e^{\beta t}$ is continuous, 
        \[\Delta G(s) = e^{\beta s} \phi(X(s)) - e^{\beta s} \phi(X(s-)) = e^{\beta s} \Delta \phi(X(s)) =  \dfrac{\partial F}{\partial v}\left(e^{\beta s-}, \phi(X(s-))\right) \cdot \Delta \phi(X(s)). \]
        Hence the last term in the right-hand side of \eqref{formule_ito} is zero. Moreover, the term $\int_0^t e^{\beta u} \drm M(u)$ is a martingale. Applying the formula at time $t \wedge T$, we get: 
        \begin{equation}
        \begin{split}
            &e^{\beta (t\wedge T)} \phi(X(t\wedge T)) \\
            &= \phi(X(0)) + \int\limits_0^{t\wedge T} e^{\beta u } \Big(\Lcal \phi(X(u)) + \beta \phi(X(u))\Big) \drm u  + \int\limits_0^{t\wedge T} e^{\beta u} \drm M(u).
        \end{split}
        \end{equation}
        \deleted{Since} \orange{By Lemma \ref{boundL},} $\Lcal \phi(X(u)) + \beta \phi(X(u)) \leq 0$ when $u < T$\orange{.} It follows that $e^{\beta (t\wedge T)} \phi(X(t\wedge T))$ is a supermartingale. Thus, 
        \begin{align}\label{exponential_moment_phi_stop}
            \esperancewithstartingpoint{x}{e^{\beta (t\wedge T)} \phi(X(t\wedge T))} \leq \phi(x).
        \end{align}
        Clearly, if $x \in \arg \min \phi$, then $\Lcal \phi(x) \geq 0$, and hence by \eqref{boundLphitheta}, $\phi(x) \leq L$. In particular, $\{\phi \leq L\} \neq \varnothing$, so $T < \infty$ a.s. as the process is irreducible. In \eqref{exponential_moment_phi_stop}, letting $t \to \infty$ and using Fatou's lemma, we obtain
        \begin{align*}
            \esperancewithstartingpoint{x}{e^{\beta  T} \phi(X(T))} \leq \phi(x).
        \end{align*}
        It is easy to see that $\phi(X(T)) \geq \phi(X(T-))/e^\theta \geq Le^{-\theta}$, which leads to our claim.
\end{proof}
The next lemma says that if we start from a configuration $x$ such that $\norminfty{x} = \Ocal{\logn}$, then this remains true for a long time.
\begin{lemma}[Stability of trajectories]\label{stabilitylogngood}
    \orange{Let $\alpha_1$ be defined as in Proposition \ref{key_proposition}. }There is a dimension-free constant $\alpha_2$ such that  \begin{align}
        \sup\limits_{\norminfty{x} \leq \alpha_1 \logn}\probawithstartingpoint{x}{\exists t \in [0, (\logn)^2], \norminfty{X(t)} > \alpha_2\logn}  = \Ocal{n^{-3}}.
    \end{align}
\end{lemma}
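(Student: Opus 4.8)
The plan is to control the maximum occupation $\norminfty{X(t)}$ uniformly over the long time window $[0,(\log n)^2]$ by combining the exponential supermartingale from Proposition~\ref{moment_hitting_time} with a union bound over the (polynomially many, after discretization) times at which the relevant exponential moment could be large. First I would fix a convenient pair of dimension-free constants $\theta_2,\beta_2>0$ and set $L_2=L(\theta_2,\beta_2)$ from Lemma~\ref{boundL}; write $\phi=\phi^{\theta_2}$. The key structural fact is \eqref{boundLphitheta2}: $\Lcal\phi(x)\le -\beta_2\phi(x)+((e^{\theta_2}-1)\kappa+\beta_2)L_2=:-\beta_2\phi(x)+A$. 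Feeding this into the martingale representation \eqref{martingale_formula}, the process $e^{\beta_2 t}\big(\phi(X(t))-A/\beta_2\big)-\tfrac{A}{\beta_2}(e^{\beta_2 t}-1)$, or more simply $e^{\beta_2 t}\phi(X(t))-\tfrac{A}{\beta_2}e^{\beta_2 t}$ plus an absolutely continuous nonincreasing correction, is a supermartingale (this is exactly the Itô computation already carried out in the proof of Proposition~\ref{moment_hitting_time}, without stopping at $T$). Hence $Z(t):=e^{-\beta_2(s-t)}\phi(X(t))$ for $t\le s$ is a supermartingale on $[0,s]$, and the maximal inequality for nonnegative supermartingales gives, for each fixed $s$,
\[
\probawithstartingpoint{x}{\sup_{t\in[0,s]}\phi(X(t))\ge \lambda}\ \le\ \frac{\esperancewithstartingpoint{x}{\phi(X(0))} + (A/\beta_2)}{\lambda e^{-\beta_2 s}}\ \le\ \frac{\phi(x)+A/\beta_2}{\lambda}e^{\beta_2 s},
\]
up to a harmless constant. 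Wait — the naive bound $e^{\beta_2 s}$ blows up over $s=(\log n)^2$; so this must be applied not on the whole window at once but on short sub-windows of length $O(1)$.

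So the actual argument goes in two layers. Layer one: on a window $[t_0,t_0+1]$, the above maximal inequality with $\phi(X(t_0))\le e^{\theta_2\norminfty{X(t_0)}}\le n^{\alpha_1\theta_2}$ and $\lambda=e^{\theta_2 \alpha_2\log n}=n^{\alpha_2\theta_2}$ yields a bound of order $n^{\alpha_1\theta_2 - \alpha_2\theta_2}$ times $e^{\beta_2}$, which is $\Ocal{n^{-4}}$ once $\alpha_2$ is chosen large enough relative to $\alpha_1$ — \emph{provided} we know $\norminfty{X(t_0)}\le\alpha_1\log n$ at the start of the window. Layer two: we therefore need to propagate the hypothesis $\norminfty{X(t_0)}\le\alpha_1\log n$ forward to the endpoints $t_0=0,1,2,\dots,\lceil(\log n)^2\rceil$. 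For this I would apply Proposition~\ref{key_proposition} (or directly Proposition~\ref{concentration_and_drift}) at each integer time: starting from $\norminfty{X(0)}\le\alpha_1\log n$ we have $R(\norminfty{X(0)})=\Ocal{\log n}$, so for every $t\ge \sigma'\log n$ (a dimension-free multiple of $\log n$) the probability that $\norminfty{X(t)}\ge\alpha_1\log n$ is $\Ocal{n^{-2}}$ — but more robustly, a single application of the drift bound shows that from \emph{any} configuration with $\norminfty{}\le\alpha_1\log n$, after one unit of time $\norminfty{}\le\alpha_1\log n$ again with probability $1-\Ocal{n^{-4}}$: indeed by Gronwall $u(1)=\esperancewithstartingpoint{X(t_0)}{\phi(X(t_0+1))}\le A/\beta_2 + \phi(X(t_0))e^{-\beta_2}\le n^{\alpha_1\theta_2}$, and Chernoff on $e^{\theta_2\norminfty{X(t_0+1)}}\le n\,\phi(X(t_0+1))$ gives the claim with $\alpha_1$ chosen so that $\theta_2\alpha_1>$ (the relevant constant). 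Chaining these $O((\log n)^2)$ one-step estimates by the Markov property and a union bound keeps the integer-time skeleton inside $\{\norminfty{}\le\alpha_1\log n\}$ with probability $1-\Ocal{(\log n)^2 n^{-4}}=1-\Ocal{n^{-3}}$.

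Finally I would union-bound the $O((\log n)^2)$ sub-window events from layer one against the integer-skeleton event from layer two: with probability $1-\Ocal{n^{-3}}$ we simultaneously have $\norminfty{X(k)}\le\alpha_1\log n$ for every integer $k\le(\log n)^2$, and on each window $[k,k+1]$ the supremum of $\norminfty{X}$ stays below $\alpha_2\log n$; since these windows cover $[0,(\log n)^2]$, the global supremum is $\le\alpha_2\log n$, which is the assertion of Lemma~\ref{stabilitylogngood}. The main obstacle is bookkeeping the interplay of the constants: $\alpha_1$ must be large enough for Proposition~\ref{key_proposition}/the one-step drift estimate to give an $n^{-4}$-type bound, and then $\alpha_2$ must be chosen large \emph{relative to} $\alpha_1$ so that the maximal inequality on a unit window beats the $e^{\beta_2}$ prefactor and the $n^{\alpha_1\theta_2}$ starting value — and one must make sure the polynomial loss from the $O((\log n)^2)$-fold union bound is absorbed, which is why we aim for $n^{-4}$ per event rather than $n^{-2}$. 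None of this is deep, but the order of quantifiers (choose $\theta_2,\beta_2$; then $L_2$; then $\alpha_1$; then $\alpha_2$) has to be respected carefully.
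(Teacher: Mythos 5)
Your Layer 2 propagation is where the proof breaks. You claim that from any configuration with $\norminfty{\cdot}\le\alpha_1\log n$, after one unit of time $\norminfty{\cdot}\le\alpha_1\log n$ again with probability $1-\Ocal{n^{-4}}$, justified by Gronwall and Chernoff. Tracing through: Gronwall gives $u(1)=\esperancewithstartingpoint{X(t_0)}{\phi^{\theta_2}(X(t_0+1))}\le A/\beta_2 + n^{\alpha_1\theta_2}e^{-\beta_2}$, and the Markov step on $e^{\theta_2\norminfty{X(t_0+1)}}\le n\,\phi^{\theta_2}(X(t_0+1))$ gives
\[
\probawithstartingpoint{X(t_0)}{\norminfty{X(t_0+1)}\ge\alpha_1\log n}\;\le\;\frac{n\,u(1)}{n^{\alpha_1\theta_2}}\;\le\;\frac{nA/\beta_2}{n^{\alpha_1\theta_2}}+n\,e^{-\beta_2}.
\]
The first term can be made small, but the second term $n\,e^{-\beta_2}$ is of order $n$, since $\beta_2$ is a fixed dimension-free constant: over unit time the contraction $e^{-\beta_2}$ is $\Theta(1)$ and cannot absorb the factor $n$ produced by passing from the average $\phi^{\theta_2}$ to the max $\norminfty{\cdot}$. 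To make the contraction win, the time step must itself be of order $\log n$, so that $e^{-\beta t}$ becomes polynomially small in $n$. This is exactly what the paper does: the skeleton is taken at times $kt$ with $t=\tfrac{6}{\beta_1}\log n$, and Proposition \ref{concentration_and_drift} (your ``more robustly'' fall-back, which is indeed the right tool) is applied at each step; iterating $k=\floor{(\log n)^2}$ times gives a union-bound loss of $\Ocal{(\log n)^2\,n^{-4}}=\Ocal{n^{-3}}$.

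For the within-window control (your Layer 1), the paper takes a much more elementary route that bypasses the supermartingale machinery entirely: $X_i(s)=X_i(0)+G_i(s)-L_i(s)\le X_i(0)+G_i(t)$ for $s\le t$, and by \eqref{mf_jump_rate} the gain $G_i$ is dominated by a Poisson process of constant rate $\kappa$, so Lemma \ref{concentrationPoisson} gives $\proba{G_i(t)\ge\alpha_2'\log n}=\Ocal{n^{-5}}$ for $\alpha_2'$ large. Conditioning on $\norminfty{X(0)}\le\alpha_1\log n$ and $\max_i G_i(t)<\alpha_2'\log n$ then forces $\sup_{s\in[0,t]}\norminfty{X(s)}\le(\alpha_1+\alpha_2')\log n$ deterministically, with no maximal inequality needed. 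If you insisted on the supermartingale route on windows of length $\Theta(\log n)$, the exponential factor $e^{\beta_2\cdot\Theta(\log n)}=n^{\Theta(1)}$ becomes polynomial and must be beaten by taking $\alpha_2$ even larger; feasible but messier. As a side remark, $Z(t)=e^{-\beta_2(s-t)}\phi^{\theta_2}(X(t))$ is not actually a supermartingale: by \eqref{boundLphitheta2} its drift is $\le e^{-\beta_2(s-t)}\bigl((e^{\theta_2}-1)\kappa+\beta_2\bigr)L_2$, which is nonnegative, so one must subtract the accumulated drift before invoking Doob, as you implicitly acknowledge with ``up to a harmless constant.''
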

\begin{proof}
   Suppose that $\norminfty{x} \leq \alpha_1 \logn$. \orange{Let $\theta_1 = \dfrac{6}{\alpha_1}$, and let $\beta_1$ be a constant,} and let $t = \dfrac{\theta_1}{\beta_1}\alpha_1\log n \geq \dfrac{\theta_1}{\beta_1} \norminfty{x}$. Then by \eqref{logngood}, 
    \[\probawithstartingpoint{x}{\norminfty{X(t)} > \alpha_1 \logn} = \Ocal{ n^{-5}}.\]
    Moreover, by Lemma \ref{concentrationPoisson}, for a dimension-free constant $\alpha_2'$ large enough,
    \[\proba{G_i(t) \geq \alpha_2' \log n} = \Ocal{n^{-5}}.\]
    Taking a union bound, we deduce that 
    \begin{align*}
        \probawithstartingpoint{x}{\{\norminfty{X(t)} > \alpha_1\logn\} \cup \{\exists i : G_i(t) \geq \alpha_2' \log n \} } = \Ocal{n^{-4}}.
    \end{align*}
    This implies
    \begin{align*}
        \probawithstartingpoint{x}{\norminfty{X(t)} \leq \alpha_1\logn , \sup_{s \in [0, t]} \norminfty{X(s)} \leq (\alpha_1 + \alpha_2') \logn} \geq 1 - \Ocal{n^{-4}}.
    \end{align*}
    The inequality remains true when we take the supremum over all $x$ such that $\norminfty{x} \leq \alpha_1 \logn$. Iterating, and using the Markov property, we deduce that\orange{, for any $k\in \Zbb_+$, }
    \begin{align*}
        \probawithstartingpoint{x}{\norminfty{X(kt)} \leq \alpha_1 \logn,\sup_{s \in [0, kt]} \norminfty{X(s)}\leq (\alpha_1 + \alpha_2')\logn } \geq ( 1 - \Ocal{n^{-4}})^k \geq 1 - \Ocal{kn^{-4}}.
    \end{align*}
    We finish the proof simply by taking \orange{$k = \floor{(\logn)^2}$}, and $\alpha_2 = \alpha_1 + \alpha_2'$.
\end{proof}
In the next proposition, we prove that the bound on $\norminfty{X}$ above leads to a strong bound on $\phi^{\theta}(X)$ for some $\theta > 0$. 
\begin{proposition}[Strong concentration of trajectories]\label{stability}
   \deleted{Let $\theta_2$ be a constant small enough such that $\theta_2 \leq \theta_1$ and $(\logn)^2 n^{2\theta_2\alpha_2 - 1} = \Ocal{\dfrac{1}{\sqrt{n}}}$. }Let \orange{$\theta_2$ and }$\beta_2$ be \orange{two} positive \orange{dimension-free} constants and $L_2 = L(\theta_2, \beta_2)$ as in Lemma \ref{boundL}. Let $T$ be the hitting time of the set $\{\phithetatwo \leq L_2\}$. \orange{Let $\alpha_1$ be defined as in Proposition \ref{key_proposition}}. Then, \orange{provided that $\theta_2$ is small enough,} for any $x$ such that $\norminfty{x} \leq \alpha_1 \logn$,  
    \begin{align}
        \probawithstartingpoint{x}{\sup_{s\in [T, (\logn)^2]} \phithetatwo(X(s)) > L_2+4} = \Ocal{n^{-3}}.
    \end{align}
\end{proposition}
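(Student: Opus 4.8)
The plan is to show that, on a high-probability event, the martingale part of $\phithetatwo(X)$ oscillates by less than $3$ over the whole window $[0,(\logn)^2]$; since the drift of $\phithetatwo$ is strictly negative as soon as $\phithetatwo>L_2$ (Lemma~\ref{boundL}), any excursion of $\phithetatwo(X)$ above $L_2+4$ after the hitting time $T$ is then ruled out. Write $\phi:=\phithetatwo$ and $L:=L_2=L(\theta_2,\beta_2)$, and let $M$ be the zero-mean martingale of \eqref{martingale_formula}, so that $\phi(X(t))=\phi(x)+\int_0^t\Lcal\phi(X(u))\,\drm u+M(t)$.

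First I would fix the good event $\mathcal{G}:=\{\sup_{s\le(\logn)^2}\norminfty{X(s)}\le\alpha_2\logn\}$, which by Lemma~\ref{stabilitylogngood} has probability $1-\Ocal{n^{-3}}$ uniformly over $\norminfty{x}\le\alpha_1\logn$. Next, to tame the (a priori unbounded) jumps of the martingale attached to an exponential observable, I would introduce the stopping time $\sigma:=\inf\{t\ge0:\norminfty{X(t)}>\alpha_2\logn\}$, so $\mathcal{G}\subseteq\{\sigma>(\logn)^2\}$, and work with $M^\sigma(t):=M(t\wedge\sigma)$. A jump of $X$ changes $\phi$ by at most $\tfrac{2(e^{\theta_2}-1)}{n}e^{\theta_2\norminfty{X(\cdot-)}}$, hence $M^\sigma$ has jumps bounded by $K_n:=2(e^{\theta_2}-1)\,n^{\theta_2\alpha_2-1}$ surely; and by \eqref{covariation_formula} together with \eqref{mf_jump_rate}, $\crochet{M^\sigma}{M^\sigma}_t\le n\kappa K_n^2\,t\le n\kappa K_n^2(\logn)^2=:b_n^2$ surely for $t\le(\logn)^2$. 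Provided $\theta_2$ is small enough that $\theta_2\alpha_2<\tfrac{1}{2}$, both $K_n\to0$ and $b_n^2=\Ocal{n^{2\theta_2\alpha_2-1}(\logn)^2}\to0$ polynomially fast.

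Then I would apply the martingale concentration inequality (Lemma~\ref{martingale_concentration_lemma}) to $M^\sigma$ and to $-M^\sigma$ with $a=\tfrac{3}{2}$, $K=K_n$, $b=b_n$; since $\crochet{M^\sigma}{M^\sigma}_{(\logn)^2}\le b_n^2$ always, this yields $\probawithstartingpoint{x}{\sup_{t\le(\logn)^2}|M^\sigma(t)|\ge\tfrac{3}{2}}\le 2\exp\!\bigl(-\tfrac{9/4}{2(\frac{3}{2}K_n+b_n^2)}\bigr)$, which is $o(n^{-3})$ because $K_n$ and $b_n^2$ vanish polynomially. On the event $\mathcal{E}:=\mathcal{G}\cap\{\sup_{t\le(\logn)^2}|M^\sigma(t)|<\tfrac{3}{2}\}$, which has probability $1-\Ocal{n^{-3}}$, we have $M=M^\sigma$ on $[0,(\logn)^2]$ (hence $\sup_{t\le(\logn)^2}|M(t)|<\tfrac{3}{2}$) and all jumps of $\phi(X)$ on that window have size $\le K_n<1$ for $n$ large. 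It remains to check, deterministically on $\mathcal{E}$, that $\phi(X(s))\le L_2+4$ for every $s\in[T,(\logn)^2]$ (if $T>(\logn)^2$ there is nothing to prove). Fix such an $s$ and set $\tau:=\sup\{u\in[T,s]:\phi(X(u))\le L_2\}$, a nonempty set since $\phi(X(T))\le L_2$ ($T$ being the hitting time of the closed set $\{\phi\le L_2\}$). On $(\tau,s]$ one has $\phi(X(u))>L_2$, so $\Lcal\phi(X(u))\le-\beta_2\phi(X(u))\le0$ by Lemma~\ref{boundL}, and the martingale decomposition gives $\phi(X(s))=\phi(X(\tau))+\int_\tau^s\Lcal\phi(X(u))\,\drm u+(M(s)-M(\tau))\le\phi(X(\tau))+(M(s)-M(\tau))$. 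Accounting for a possible upward jump of $\phi(X)$ at $\tau$ of size $\le K_n$ gives $\phi(X(\tau))\le L_2+K_n$, whence $\phi(X(s))\le L_2+K_n+|M(s)|+|M(\tau)|<L_2+K_n+3\le L_2+4$. Therefore $\{\sup_{s\in[T,(\logn)^2]}\phithetatwo(X(s))>L_2+4\}\subseteq\mathcal{E}^c$, and the claimed $\Ocal{n^{-3}}$ bound follows.

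The hard part is the bookkeeping that makes the martingale concentration argument legitimate for the unbounded exponential observable: Lemma~\ref{stabilitylogngood} is precisely what lets the localization at $\sigma$ produce a martingale with genuinely bounded jumps and tiny predictable quadratic variation, and it is also the fact that these jumps are $o(1)$ on $\mathcal{G}$ which lets the $+K_n$ correction at the random endpoint $\tau$ be absorbed into the slack between $L_2+3$ and $L_2+4$. The only tuning needed to close the estimate is the quantitative choice of $\theta_2$ small (equivalently $\theta_2\alpha_2<\tfrac{1}{2}$, forcing $K_n$ and $b_n^2$ to vanish polynomially).
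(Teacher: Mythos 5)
Your proof is correct and takes essentially the same approach as the paper's: it combines Lemma~\ref{stabilitylogngood} to localize $\norminfty{X}$ at scale $\logn$, bounds the jumps and predictable bracket of the stopped martingale, applies Lemma~\ref{martingale_concentration_lemma}, and finishes with the deterministic excursion argument using the negative drift of $\phithetatwo$ above $L_2$ from Lemma~\ref{boundL}. The only cosmetic differences are that you bound $\crochet{M^\sigma}{M^\sigma}$ directly from~\eqref{covariation_formula} rather than via the decomposition $M=\tfrac{1}{n}\sum_i M_i$ and negativity of the cross-covariations (the paper's route sharpens a constant but changes nothing), you take $a=3/2$ with the slack absorbed by the $O(K_n)$ jump correction rather than $a=1$ with the jump bound $2\sup|M|$, and you observe that the event is vacuous when $T>(\logn)^2$ so no separate bound on $\Pbb_x[T\geq(\logn)^2]$ is needed.
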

We will need the following lemma:
\begin{lemma}[Martingale estimate]\label{lemma_for_prop_stability}
    \orange{Let $\alpha_1$ be defined as in Proposition \ref{key_proposition}. Let $\theta_2$ be a positive dimension free constant. }Suppose that $\norminfty{x} \leq \alpha_1 \logn$, and let $(M(t))_{t\geq 0}$ be defined by 
    \begin{equation*}
        M(t) = \phithetatwo(X(t)) - \phithetatwo(X(0)) - \int_0^t \Lcal \phithetatwo(X(u)) \drm u,    
    \end{equation*}
    \orange{which is a martingale according to \eqref{martingale_formula}.} Then, \orange{for $\theta_2$ small enough,}  
    \[\probawithstartingpoint{x}{\sup_{s \in [0, (\logn)^2]} |M(s)| \geq 1} = \Ocal{n^{-3}}.\]
\end{lemma}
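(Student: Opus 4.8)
The plan is to apply the martingale concentration inequality of Lemma~\ref{martingale_concentration_lemma} to $M$, after a suitable localization. The difficulty is that, the integral term $\int_0^t \Lcal \phithetatwo(X(u))\drm u$ being continuous in $t$, the martingale $M$ has exactly the jumps of $\phithetatwo(X)$, and a single jump of $X$ moving a particle between two tall sites changes $\phithetatwo(X)$ by as much as $\Theta(e^{\theta_2 m}/n)$; thus the only global bound on $|M(t)-M(t-)|$ available a priori is far too large for Lemma~\ref{martingale_concentration_lemma} to be useful. I would therefore localize on the good event of Lemma~\ref{stabilitylogngood}: let $\tau := \inf\{t \geq 0 : \norminfty{X(t)} > \alpha_2 \logn\}$, set $\sigma := \tau \wedge (\logn)^2$, and work with the stopped martingale $\tilde M(t) := M(t \wedge \sigma)$, which is again a zero-mean càdlàg martingale started from $0$.

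\textbf{Jump and bracket bounds.} At any jump time $s \leq \sigma$ (hence $s \leq \tau$) a particle moves from a site $i$ to a site $j$ with $X_i(s-), X_j(s-) \leq \alpha_2 \logn$, so
\[
    |\tilde M(s) - \tilde M(s-)| = \frac{1}{n}\left|\left(e^{\theta_2(X_i(s-)-1)} - e^{\theta_2 X_i(s-)}\right) + \left(e^{\theta_2(X_j(s-)+1)} - e^{\theta_2 X_j(s-)}\right)\right| \leq \frac{e^{\theta_2}-e^{-\theta_2}}{n}\,n^{\theta_2\alpha_2} =: K,
\]
while for non-jump times the increment is $0$, so $|\tilde M(t)-\tilde M(t-)| \leq K$ for all $t > 0$. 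Moreover, by the covariation formula \eqref{covariation_formula} and the fact that the total jump rate out of any configuration $x$ is $\sum_i r(x_i) \leq n\kappa$ (by \eqref{mf_jump_rate}), the integrand defining $\langle M\rangle$ is at most $n\kappa K^2$ for every $u < \tau$, so $\langle \tilde M\rangle_t = \langle M\rangle_{t\wedge\sigma} \leq n\kappa K^2 (\logn)^2 =: b^2$ for all $t \geq 0$, a deterministic bound.

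\textbf{Concentration and conclusion.} Applying Lemma~\ref{martingale_concentration_lemma} to $\tilde M$ and to $-\tilde M$ with $a = 1$ gives
\[
    \probawithstartingpoint{x}{\sup_{t\geq 0}|\tilde M(t)| \geq 1} \leq 2\exp\!\left(-\frac{1}{2(K + b^2)}\right).
\]
Since $K + b^2 = \Ocal{n^{2\theta_2\alpha_2 - 1}(\logn)^2}$, choosing $\theta_2$ small enough that $2\theta_2\alpha_2 < 1$ makes $K+b^2 = o(1/\logn)$, so this probability is $\Ocal{n^{-3}}$. Finally, on the event $\{\tau > (\logn)^2\}$ one has $\sigma = (\logn)^2$ and hence $\tilde M(s) = M(s)$ for all $s \in [0,(\logn)^2]$; combining this with $\probawithstartingpoint{x}{\tau \leq (\logn)^2} = \Ocal{n^{-3}}$ from Lemma~\ref{stabilitylogngood} yields
\[
    \probawithstartingpoint{x}{\sup_{s\in[0,(\logn)^2]}|M(s)| \geq 1} \leq \probawithstartingpoint{x}{\tau \leq (\logn)^2} + \probawithstartingpoint{x}{\sup_{t\geq 0}|\tilde M(t)| \geq 1} = \Ocal{n^{-3}}.
\]

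The main — and essentially the only — obstacle is the one already isolated: Lemma~\ref{martingale_concentration_lemma} requires a uniform-in-time jump bound, which $M$ does not possess globally, so the whole point of the argument is to run it on the stopped martingale $\tilde M$ and to pay, via Lemma~\ref{stabilitylogngood}, the cheap $\Ocal{n^{-3}}$ price of the localization. Everything else — the computation of the jump size and of the predictable bracket — is routine.
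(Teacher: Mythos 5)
Your proof is correct and follows essentially the same route as the paper: localize via the exit time from $\{\norminfty{\cdot}\le\alpha_2\log n\}$ (your $\tau$, the paper's $U$), bound the jump size and the predictable bracket of the stopped martingale, invoke Lemma~\ref{martingale_concentration_lemma} for $\pm\tilde M$, and union-bound against the event of Lemma~\ref{stabilitylogngood}. The one place where you diverge is in the bound on the bracket. The paper writes $M = \frac{1}{n}\sum_i M_i$, observes that conservation of particles forces $\crochet{M_i}{M_j}\le 0$ for $i\neq j$, and hence reduces to $\crochet{M}{M}_t\le \frac{1}{n^2}\sum_i\crochet{M_i}{M_i}_t$, which it then controls term by term. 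You skip the decomposition entirely and bound $\crochet{\tilde M}{\tilde M}$ directly: on the good event each jump of $\phi^{\theta_2}$ has size $\le K = \Ocal{n^{\theta_2\alpha_2-1}}$ and the total jump rate is $\le n\kappa$, so $\crochet{\tilde M}{\tilde M}_t \le n\kappa K^2 (\log n)^2 = \Ocal{n^{2\theta_2\alpha_2-1}(\log n)^2}$. Both routes yield the same order $n^{2\theta_2\alpha_2-1}$ up to polylogarithmic factors and constants (the negative-covariation route is marginally tighter by discarding the negative cross terms, but that gain is immaterial here), so the paper's decomposition is a clean observation but not a necessity for the bound as stated; your more direct bookkeeping is a legitimate simplification. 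One small technical remark in your favor: your jump bound is phrased in terms of $X(s-)$, which is exactly what is needed, since even at the exit time $\tau$ the pre-jump configuration still satisfies $\norminfty{X(\tau-)}\le\alpha_2\log n$.
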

For simplicity, in the proofs of Lemma \ref{lemma_for_prop_stability} and Proposition \ref{stability}, we still write $\phi$ instead of $\phithetatwo$ and $\phi_i$ instead of $\phiithetatwo$. First we see how Lemma \ref{lemma_for_prop_stability} leads to Proposition \ref{stability}:
\begin{proof}[Proof of Proposition \ref{stability}]
    \orange{Let $\theta_2$ and $M$ be as in Lemma \ref{lemma_for_prop_stability}. }We will prove that
    \[\left\{\sup_{s \in [T, (\logn)^2]}\phi(X(s)) > L_2 + 4\right\} \bigsubset[1.7] \left\{\sup_{s \in [0, (\logn)^2]} |M(s)| \geq 1\right\} \bigcup\, \big\{T \geq (\logn)^2\big\},\]
    and then we show that the probabilities of the events on the right-hand side is $\Ocal{n^{-3}}$. By contrapositivity, suppose that for a realization of $X$ \orange{which is càdlàg almost surely}, we have \orange{$\sup\limits_{s \in [0, (\logn)^2]}|M(s)| < 1$} and $ T < (\logn)^2$. We prove that 
    \[\sup_{s \in [T, (\logn)^2]}\phi(X(s)) \leq L_2 + 4.\]
    For $h \in [T, (\logn)^2]$ arbitrary, let $s_h = \sup\{s \in [0,h]: \phi(X(s-)) \leq L_2\}$. Note that for any $s \in [0, (\logn)^2]$,  \[ |\Delta\phi(X(s))| = |\Delta M(s)| \leq 2\sup_{s \in [0, (\logn)^2]} |M(s)| \leq 2.\] Moreover, by definition of $s_h$, $\phi(X(s_h-)) \leq L_2$, and hence $\phi(X(s_h)) \leq L_2 + \Delta\phi(X(s_h)) \leq L_2 + 2$. Also by definition of $s_h$, $\phi(X(u)) > L_2$ when $s_h \leq u < h$, and hence $\Lcal \phi(X(u)) < 0$ \orange{by Lemma \ref{boundL}}. Consequently,
    \begin{align}
        \phi(X(h)) = \phi(X(s_h)) - M(s_h) + M(h) + \int_{s_h}^h \Lcal \phi(X(u)) \drm u \leq L_2+4,
    \end{align}
    which proves the inclusion. Besides, Proposition \ref{moment_hitting_time} gives us $\probawithstartingpoint{x}{T \geq (\logn)^2} = \Ocal{n^{-3}}$ by Chernoff's bound. Combining this and Lemma \ref{lemma_for_prop_stability}, we deduce the claim.
\end{proof}
Now we prove Lemma \ref{lemma_for_prop_stability}:
\begin{proof}[Proof of Lemma \ref{lemma_for_prop_stability}]
    We will provide good \orange{control} on $\Delta M$ and $\crochet{M}{M}$, and afterward we use Lemma \ref{martingale_concentration_lemma}.
    By \eqref{martingale_formula}, the process $M_i(t)$ defined by
    \begin{align*}
        M_i(t) = \phi_i(X(t)) - \phi_i(X(0)) - \int_0^t \Lcal \phi_i(X(u)) \drm u
    \end{align*}
    is a zero-mean martingale. It it clear that $M$ is the average of $M_i$:
    \begin{align*}
        M(t) = \dfrac{1}{n} \sum_{i = 1}^n M_i(t).
    \end{align*}
    Due to the conservation of the number of particles, the martingales $(M_i)_{i \in [n]}$ have negative covariations. More precisely\orange{, according to \eqref{covariation_formula}}, we have
    \begin{align*}
        d\crochet{M_i}{M_j}_t = \sum_{1 \leq k, l \leq n} \dfrac{r(X_k(t))}{n}\Big(\phi_i(X(t) - \delta_k + \delta_l) - \phi_i(X(t))\Big)\Big(\phi_j(X(t) - \delta_k + \delta_l) - \phi_j(X(t))\Big).
    \end{align*}
    Note that when $i \neq j$, $\left(\phi_i(x - \delta_k + \delta_l) - \phi_i(x)\right)\left(\phi_j(x - \delta_k + \delta_l) - \phi_j(x)\right)$ is negative if $\{k, l\} = \{i, j\}$ and is zero otherwise. Hence $\crochet{M_i}{M_j}_t \leq 0, \, \forall i\neq j$. Consequently, for all $t$ positive,  
    \begin{align*}
        \crochet{M}{M}_t \leq \dfrac{1}{n^2} \sum_{i = 1}^n \crochet{M_i}{M_i}_t.
    \end{align*}
    \orange{Let $\alpha_2$ be defined as in Lemma \ref{stabilitylogngood}, } and let $U$ be the exit time from $\{\norminfty{\cdot} \leq \alpha_2 \logn\}$. \orange{Note that, almost surely, for any $u \geq 0$, $\Delta M(u) = 0$ or there exist $k, l \in [n]$ such that 
    \[\Delta M(u) = \phi(X(u) - \delta_k + \delta_l) - \phi(X(u)) = \dfrac{1}{n} (e^{\theta_2} - 1)(e^{\theta_2X_l(u-)} - e^{\theta_2 (X_k(u-) - 1)}).\]} \deleted{Note that before time $U$,}\orange{In either case, before time $U$, almost surely,
    \begin{align*}
        |\Delta M(u)| \leq \dfrac{1}{n} (e^{\theta_2} - 1) e^{\theta_2 \norminfty{X(u-)}} \leq (e^{\theta_2} - 1) n^{\theta_2 \alpha_2 - 1}.
    \end{align*}        }   
    \orange{Similarly, 
    \[|\phi_i(x - \delta_k + \delta_l) -\phi_i(x)| \leq (e^{\theta_2} - 1) e^{\theta_2 \norminfty{x}}(\indicator{k = i} \vee \indicator{l = i}).\]
    Hence for any $u < U$, 
    \[|\phi_i(x - \delta_k + \delta_l) -\phi_i(x)|^2\leq (e^{\theta_2} - 1)^2 n^{2\theta_2 \alpha_2}(\indicator{k = i}\vee \indicator{l = i}).\]
    Then by dividing the double sum to the sum where $k = i$ or $l = i$ or $k\neq i \neq l$, we get
    }
    \begin{align*}
        \crochet{M_i^U}{M_i^U}_t &= \int_0^{t \wedge U} \sum_{1 \leq k, l \leq n} \dfrac{r(X_k(u))}{n}\left(\phi_i(X(u) - \delta_k + \delta_l) - \phi_i(X(u))\right)^2 \drm u \\
        &\orange{\leq \int_0^{t \wedge U} (r(X_i(u)) + \zeta(u))(e^{\theta_2} - 1)^2 n^{2\theta_2 \alpha_2} \drm u },
    \end{align*}
    \deleted{Hence for $t = (\logn)^2$,} \orange{where we recall that $\zeta(u)$ is the mean-field jump rate. Taking the sum over $i \in [n]$, we get }
    \begin{align*}
        \crochet{M^U}{M^U}_t &\leq \orange{\dfrac{1}{n^2} \int_0^{t \wedge U} 2 n \zeta(u)\, n^{2 \theta_2\alpha_2 }(e^{\theta_2} - 1)^2 \drm u} \\
        &\leq \dfrac{1}{n^2} \int_0^{t \wedge U} 2 n \kappa\, n^{2 \theta_2\alpha_2 }(e^{\theta_2} - 1)^2 \drm u\\
        &= \Ocal{tn^{2\theta_2 \alpha_2 - 1}},
    \end{align*}
    \orange{where we have used \eqref{mf_jump_rate} in the second inequality. Now for $\theta_2$ small enough, $2\theta_2 \alpha_2 - 1 < -1/2$, which implies $\Delta M(u)= \Ocal{1/\sqrt{n}}$ and $\crochet{M^U}{M^U}_t = \Ocal{1/\sqrt{n}}$ if $u < U$ and $t = (\logn)^2$.} Then we apply Lemma \ref{martingale_concentration_lemma} to the martingale $M^{U \wedge (\logn)^2}(\cdot) := M(U \wedge (\logn)^2 \wedge \cdot)$, with $a = 1$, $K = b^2 = \Ocal{1/\sqrt{n}}$, to obtain 
    \begin{align*}
        \probawithstartingpoint{x}{\sup_{s \geq 0} M^{U \wedge (\logn)^2}(s) \geq 1 } \leq e^{-\Omega(\sqrt{n})}.
    \end{align*}
    Using the same argument for $-M$, and taking a union bound, we deduce that 
    \begin{align*}
        \probawithstartingpoint{x}{\sup_{s \geq 0} \absolutevalue{M^{U \wedge (\logn)^2}(s)} \geq 1 } \leq 2 e^{-\Omega(\sqrt{n})}.
    \end{align*}
    Taking a union bound with the event in Lemma \ref{stabilitylogngood}, we get 
    \begin{align*}
        \probawithstartingpoint{x}{\exists s \in [0, (\logn)^2] :|M_s| \geq 1} \leq \probawithstartingpoint{x}{\sup_{s \geq 0} \absolutevalue{M^{U \wedge (\logn)^2}(s)} \geq 1 } + \probawithstartingpoint{x}{U \leq (\logn)^2} = \Ocal{n^{-3}},
    \end{align*}
    which finishes our proof.
\end{proof}

\subsection{Path coupling via tagged particles}\label{path_coupling}
For $k\in \Zbb_+$, define 
\begin{equation}
    \Delta(k) := r(k+1) - r(k) \geq 0.
\end{equation}
Let $\Theta$ be a Poisson point process of intensity $\dfrac{1}{n}\drm t\otimes \drm u\otimes \textrm{Card}$ on $\Rbb_+ \times \Rbb_+ \times [n]$, independent of the Poisson processes used in the graphical construction of $X$. For a site $i \in [n]$, define an $[n]$- valued process $I = (I(t))_{t\geq 0}$ by setting $I(0) = i$ and for each $(t,u,k)$ in $\Theta$, 
\begin{equation}
    I(t) := \begin{cases} 
                        k &\text{if } u \leq \Delta(X_{I}(t-))\\
                        I(t-) &\text{otherwise},\\
            \end{cases}
\end{equation}
where $X_I(t) := X_{I(t)}(t)$. This definition means that conditionally on $X$, the process $I(t)$ will jump with the time-varying rate $\Delta(X_{I}(t))$, and the destination is uniformly chosen among all sites. A simple but important observation is that $(X(t) + \delta_{I(t)})_{t\geq 0}$ has the same distribution as a Zero-Range process starting at $x + \delta_i$. We call $I$ a tagged particle, and we stress here that the construction of $I$ relies strictly on condition \eqref{rcondition1}. For $j$ another site, similarly we can construct a second tagged particle $J$ starting from $J(0) = j$ using the same process $\Theta$. Thus we have a coupling $(X(t) + \delta_{I(t)}, X(t) + \delta_{J(t)})_{t\geq 0}$ of two Zero-Range processes starting at $x + \delta_i$ and $x+\delta_j$ respectively.\orange{We call the particles of $X$ \textit{non tagged particles}.} We note $\Pbb_{x, i, j}$ for the law of the process $(X, I, J)$ starting from $(x, i, j)$ and $\Ebb_{x, i, j}$ for the expectation taken \wrt $\Pbb_{x, i, j}$. Let $\tau$ be the coalescence time of $I$ and $J$: 
\begin{equation}
    \tau := \inf\{t \geq 0: I(t) = J(t)\}.
\end{equation}
By the classical relation between $\dtv{\cdot}{\cdot}$ and coupling, we have:
\begin{equation}\label{couplinginequality}
    \dtv{P^t_{x + \delta_i}}{ P^t_{x+\delta_j}} \leq \probawithstartingpoint{x, i, j}{\tau > t}.
\end{equation}
By construction, if the two tagged particles manage to jump at the same time, then coalescence occurs. However, the jump rates of the tagged particles depend on their number of cooccupants, which complicates our task. We will need to analyze carefully the trajectories of $(X, I, J)$ to obtain a good estimate on $\tau$.\\
\orange{The inequality \eqref{couplinginequality} gives us the estimate on the total variation distance of two processes starting from two adjacent configurations. However, comparing processes starting from two arbitrary configurations directly by coupling is in general not easy. Nevertheless, if we are interested only in total variation, and if the comparison using coupling of processes from adjacent configurations is simpler, we can extend the comparison to two arbitrary configurations by choosing a path between them and then use triangle inequality. This simple but powerful idea, originally due to Bubley and Dyer in \cite{bubley1997path}, is the strategy that we will implement.}\\
\orange{Throughout this subsection, $\alpha_1$ will be as in Proposition \ref{key_proposition}, and $\theta_2, \beta_2, L_2$ will be as in Proposition \ref{stability}.} Proposition \ref{stability} and \eqref{tempsdattentelaplace} imply that starting from any configuration $x$ such that $\norminfty{x} \leq \alpha_1 \logn$, the system will reach the set $\{\phithetatwo \leq L_2\}$ quickly (in time $\Ocal{\logn}$) and then remains in $\{\phithetatwo \leq L_2 + 4\}$ for a long time (namely $\Omega((\logn)^2)$). We will prove that if this is the case, then the coalescence time $\tau$ is likely to be $\Ocal{\logn}$.

From now on, let $\good := \goodset$. We say that a configuration $x$ is \textit{good} if $x \in \good$ and $x$ is \textit{bad} otherwise.  
We introduce the process $(X^*, I^*)$ taking values in $\good \times [n]$ whose infinitesimal generator $\Lcal ^*$ acts on an observable $\phi: \good \times [n] \to \Rbb$ by:
\begin{equation}
    \Lcal ^*\phi(x,i) = \sum_{k,l}\dfrac{r(x_k)}{n}\left(\phi(x  - \delta_k + \delta_l, i) - \phi(x,i)\right)  \indicator{x - \delta_k + \delta_l \in \good} + \sum_{j = 1}^n\dfrac{\Delta(x_i)}{n}(\phi(x,j) - \phi(x,i)).
\end{equation}
This definition means that $X^*$ is a Zero-Range process constrained to staying good, and $I^*$ jumps with a time-varying rate $\Delta(X^*_I(t)) := \Delta(X^*_{I^*(t)}(t))$ and chooses its destination uniformly among all sites. We can use the same Poisson processes in the graphical construction of $(X, I)$ and $(X^*, I^*)$ to obtain a coupling of them, and we can construct the second tagged particle $J^*$ analogously. By their construction, if the processes $(X, I, J)$ and $(X^*, I^*, J^*)$ start from the same configuration $(x,i, j)$, then they coincide up to the time when $X$ turns bad. 
For any $\theta > 0$, we define the observable $\phi^\theta_*: \good \times [n] \to \Rbb_+$ by 
\begin{equation*}
    \phi_*^\theta(x, i) := \phi^\theta_i(x) = e^{\theta x_i}.
\end{equation*}
\begin{lemma}[Exponential moment of the number of cooccupants]\label{exponential_moment_cooccupants} 
    There exist dimension-free constants $c_2$ and $K$ such that \orange{for any $(x, i, j) \in \good \times [n] \times [n]$,} when $t \geq c_2 (x_i \vee x_j)$,  
    \begin{align}\label{laplacecoocupant}
        \esperancewithstartingpoint{x,i,j}{e^{\theta_2(X^*_I(t) \vee (X^*_J(t))}} \leq K.
    \end{align}
\end{lemma}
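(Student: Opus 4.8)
The plan is to run a Lyapunov (drift) argument for the observable $\phi_*^{\theta_2}$ along the constrained process $(X^*,I^*)$, in the spirit of Lemma~\ref{boundL} and Proposition~\ref{concentration_and_drift}. Concretely, I will show that there are dimension-free constants $\beta>0$ and $C$ such that
\[
  \Lcal^*\phi_*^{\theta_2}(x,i)\ \le\ -\beta\,\phi_*^{\theta_2}(x,i)+C\qquad\text{for all }(x,i)\in\good\times[n].
\]
Granting this, note that $(X^*,I^*)$ is itself Markov (the motion of $I^*$ feeds on $X^*_{I^*}$ only, and that of $X^*$ does not see $I^*$) with generator $\Lcal^*$; hence, by \eqref{martingale_formula}, $u(t):=\esperancewithstartingpoint{x,i,j}{\phi_*^{\theta_2}(X^*(t),I^*(t))}$ (the $j$-coordinate being irrelevant for this quantity) satisfies $u'(t)\le-\beta u(t)+C$. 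Since $u(0)=e^{\theta_2 x_i}$, Lemma~\ref{differentialinequality} gives $u(t)\le C/\beta+1$ as soon as $t\ge\theta_2 x_i/\beta$. Running the same estimate for $J^*$ and using $e^{\theta_2(a\vee b)}\le e^{\theta_2 a}+e^{\theta_2 b}$ then yields \eqref{laplacecoocupant} with $c_2=\theta_2/\beta$ and $K=2(C/\beta+1)$.

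For the drift inequality I expand $\Lcal^*\phi_*^{\theta_2}(x,i)$ from the definition of $\Lcal^*$. The jumps of the tagged particle contribute exactly $\Delta(x_i)\big(\phithetatwo(x)-e^{\theta_2 x_i}\big)\le\Delta(x_i)\phithetatwo(x)\le(L_2+4)\,r(x_i+1)$, using $x\in\good$ and $\Delta(x_i)\le r(x_i+1)$; by \eqref{rcondition3} this is at most a dimension-free multiple of $x_i+1$, hence negligible next to $e^{\theta_2 x_i}$ up to an additive constant. The Zero-Range part equals
\[
  \frac{(e^{\theta_2}-1)\,e^{\theta_2 x_i}}{n}\sum_{k\ne i}r(x_k)\,\indicator{x-\delta_k+\delta_i\in\good}\ -\ \frac{(1-e^{-\theta_2})\,r(x_i)\,e^{\theta_2 x_i}}{n}\,N_i(x),
\]
where $N_i(x):=\#\{l\ne i:\ x-\delta_i+\delta_l\in\good\}$. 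Dropping the (nonnegative) indicators in the first sum and using \eqref{mf_jump_rate}, it is at most $(e^{\theta_2}-1)\kappa\,e^{\theta_2 x_i}$ — the same ``bad'' positive drift that already appears in Lemma~\ref{boundL}.

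The one genuinely new point is that the $\good$-constraint cannot destroy the negative drift. A one-line computation gives $\phithetatwo(x-\delta_i+\delta_l)\le\phithetatwo(x)$ whenever $x_l\le x_i-1$, so the departures from site $i$ that are forbidden by the constraint go only to sites $l$ with $x_l\ge x_i$; since $\sum_l e^{\theta_2 x_l}=n\,\phithetatwo(x)\le n(L_2+4)$, there are at most $n(L_2+4)e^{-\theta_2 x_i}$ such sites, whence $N_i(x)\ge n\big(1-\tfrac1n-(L_2+4)e^{-\theta_2 x_i}\big)\ge n/2$ once $x_i$ exceeds a dimension-free threshold (and $n\ge4$). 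Therefore, for $x_i$ above that threshold,
\[
  \Lcal^*\phi_*^{\theta_2}(x,i)\ \le\ e^{\theta_2 x_i}\Big[(e^{\theta_2}-1)\kappa-\tfrac12(1-e^{-\theta_2})\,r(x_i)\Big]+(L_2+4)\,r(x_i+1).
\]
By \eqref{rcondition2} we have $r(x_i)\to\infty$, so enlarging the threshold makes the bracket $\le-1$, and the last term, being polynomial in $x_i$, is absorbed into $-\tfrac12 e^{\theta_2 x_i}+C$ for a dimension-free $C$; for $x_i$ below the threshold both $\phi_*^{\theta_2}(x,i)$ and $\Lcal^*\phi_*^{\theta_2}(x,i)$ (keeping only the positive terms) are dimension-free constants, so the inequality persists after enlarging $C$. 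This proves the drift bound with $\beta=\tfrac12$, and the lemma follows by the Gronwall step above.

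I expect the combinatorial estimate of the previous paragraph to be the main obstacle: one must argue that, for a highly occupied site, the ``staying good'' constraint blocks only a vanishing fraction of the departures of non-tagged particles, so that the strong negative drift $-r(x_i)e^{\theta_2 x_i}$ produced by $r(x_i)\to\infty$ survives up to a constant and swallows both the uniformly bounded positive drift $(e^{\theta_2}-1)\kappa\,e^{\theta_2 x_i}$ and the polynomial term coming from the motion of the tagged particle itself. The rest is the routine generator computation and an application of Gronwall's lemma already used repeatedly above.
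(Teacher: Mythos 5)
Your overall architecture — establish a Lyapunov drift $\Lcal^*\phi_*^{\theta_2}\le -\beta\phi_*^{\theta_2}+C$ on $\good\times[n]$, then apply Gronwall's lemma (Lemma \ref{differentialinequality}) to $u(t)=\esperancewithstartingpoint{x,i,j}{\phi_*^{\theta_2}(X^*(t),I^*(t))}$, and finish with $e^{\theta_2(a\vee b)}\le e^{\theta_2 a}+e^{\theta_2 b}$ — is exactly the paper's, and the treatment of the arrival term (drop the indicators, use $\zeta\le\kappa$) is also identical.

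The genuinely different ingredient is your control of the departure term. You argue that the only departures $i\to l$ blocked by the ``stay in $\good$'' constraint are those to sites with $x_l\ge x_i$, because $\phithetatwo(x-\delta_i+\delta_l)\le\phithetatwo(x)\le L_2+4$ whenever $x_l\le x_i-1$; then you count the blocked sites by a Markov-inequality bound $\#\{l:x_l\ge x_i\}\le n(L_2+4)e^{-\theta_2 x_i}$, which follows directly from $x\in\good$. This makes $N_i(x)\ge n/2$ for $x_i$ above a dimension-free threshold, and then you let $r(x_i)\to\infty$ (via \eqref{rcondition2}) do the work of dominating $(e^{\theta_2}-1)\kappa$. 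The paper instead invokes the density bound \eqref{densitybounded}: at most $n/N$ sites have $x_l>N\rho$, and any departure from a site with $x_i>N\rho$ to a site with $x_l\le N\rho$ preserves goodness, giving $N_i(x)\ge n(1-1/N)$ and a \emph{fixed} coefficient $r(\lceil N\rho\rceil)$, with $N$ chosen once and for all so that $a_2>0$. Your counting is arguably more intrinsic (it uses only the $\good$ constraint, never the density), at the cost of producing a growing coefficient $r(x_i)$ and having to absorb the polynomial tagged-particle term $(L_2+4)r(x_i+1)$ into the exponential; the paper's version instead kills the tagged-particle contribution by noting it is negative once $e^{\theta_2 x_i}>L_2+4$, producing a bounded $b_2$ immediately. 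Both bookkeeping routes land on the same drift inequality, and your Gronwall step and the final $\vee$-estimate match the paper verbatim. The proof is correct.

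One small remark: you also need, as you do, to keep the jump rate $\Delta(x_i)\le r(x_i+1)$ of $I^*$ in the generator — the paper phrases this as ``the third term'' — and your bound via \eqref{rcondition3} is fine, though slightly wasteful compared to the paper's observation that this contribution is in fact $\le 0$ for $x_i$ large enough.
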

\begin{proof}
    We will prove that there exist positive dimension-free constants $a_2, b_2$ such that
    \begin{equation}\label{boundL*phi}
        \Lcal ^*\phi_*^{\theta_2}(x,i) \leq -a_2\phi_*^{\theta_2}(x,i) + b_2,
    \end{equation}
    for any $(x,i) \in \good \times [n]$. Let us see how \eqref{boundL*phi} leads to the claim:\\
    Let $u(t) = \esperancewithstartingpoint{x, i}{\phi^{\theta_2}_*(X^*(t), I^*(t))}$. Then 
    \begin{equation*}
        u'(t) = \esperancewithstartingpoint{x, i}{\Lcal^* \phi^{\theta_2}_*(X^*(t), I^*(t))} \leq -a_2u(t) + b_2.
    \end{equation*}
    Hence, by Lemma \ref{differentialinequality},
    \begin{equation*}
        \esperancewithstartingpoint{x, i}{e^{\theta_2(X_I^*(t))}} \leq \dfrac{b_2}{a_2} + 1,
    \end{equation*}
    for any $t \geq \dfrac{\theta_2}{a_2}x_i$. We take $c_2 = \dfrac{\theta_2}{a_2}, \, K = 2\left(\dfrac{b_2}{a_2} + 1\right)$, so the claim follows from the inequality
    \[e^{\theta_2(X^*_{I}(t) \vee X^*_{J}(t))} \leq e^{ \theta_2X^*_{J}(t)} + e^{\theta_2X^*_{I}(t)}.\]
    It remains to prove \eqref{boundL*phi}.
    It is similar to Lemma \ref{boundL} except now we have an extra term corresponding to the jump of the tagged particle $I$, which is controlled by the fact that the system is constrained to staying good. More precisely,
    \begin{align*}
    \Lcal^*\phithetatwo_*(x,i) = &\sum_{k \neq i}\dfrac{r(x_k)}{n}\left(e^{\theta_2} - 1\right)\phithetatwo_*(x,i)\indicator{x - \delta_k + \delta_i \in \good} \\
    &+ \sum_{k \neq i} \dfrac{r(x_i)}{n}\left(e^{-\theta_2} - 1\right)\phithetatwo_*(x, i)  \indicator{x - \delta_i + \delta_k \in \good}\\
    &+ \sum_{j = 1}^n\dfrac{\Delta(x_i)}{n}(\phithetatwo_*(x,j) - \phithetatwo_*(x,i)).
\end{align*}
    We will bound the three terms above to obtain an upper bound on $\Lcal ^*\phithetatwo_*$:\\
    The first term: since \orange{$\dfrac{1}{n}\sum\limits_{k=1}^n r(x_k) \leq \kappa$ by \eqref{mf_jump_rate}}, hence,
    \begin{align*}
       \sum_{k \neq i}\dfrac{r(x_k)}{n}\left(e^{\theta_2} - 1\right)\phithetatwo_*(x,i)\indicator{x - \delta_k + \delta_i \in \good} \leq \kappa(e^{\theta_2} - 1) \phithetatwo_*(x,i).
    \end{align*}
    The third term is negative when $x_i >\dfrac{1}{\theta_2} \log (L_2+4)$ \orange{since $x \in \good$}, hence,
    \[\Delta(x_i)\left(\dfrac{\sum_{j = 1}^n e^{\theta_2 x_j}}{n} - e^{\theta_2 x_i}\right) < \max_{k \leq \dfrac{1}{\theta_2} \log (L_2+4)} \Delta(k)(L_2 + 4) \orange{=:} c,\]
    for some dimension-free constant $c$.\\
    For the second term, we first observe that there are at most $\dfrac{n}{N}$ sites $l$ such that $x_l > N\rho$, \orange{for any constant $N>0$, thanks to \eqref{densitybounded}}. On the other hand, if $x_i > N\rho$ and $x_l \leq N\rho$, then $\phithetatwo(x - \delta_i + \delta_l) \leq \phithetatwo(x)$, so if $x$ is good, then so is $x - \delta_i + \delta_l$. Consequently, as $e^{-\theta_2} - 1$ is negative, we have  
    \begin{align*}
        &\sum_{k\neq i} \dfrac{r(x_i)}{n} (e^{-\theta_2} - 1)\phi_*^{\theta_2}(x, i) \indicator{x - \delta_i + \delta_k \in \good}\\
        \leq &\sum_{k\neq i} \dfrac{r(x_i)}{n} (e^{-\theta_2} - 1)\phi_*^{\theta_2}(x, i) \indicator{x - \delta_i + \delta_k \in \good} \indicator{x_i > N\rho}\\
        \leq &\sum\limits_{k \neq i}\dfrac{\orange{r(\ceil{N\rho})}}{n}(e^{-\theta_2} - 1)\phi_*^{\theta_2}(x, i) \indicator{x_i > N\rho} \indicator{x_k \leq N\rho}\\
        \leq & \orange{r(\ceil{N\rho})}(e^{-\theta_2} - 1)\phi_*^{\theta_2}(x, i)\indicator{x_i > N\rho} (1 - 1/N)\\
        = &\orange{r(\ceil{N\rho})}(e^{-\theta_2} - 1)\phi_*^{\theta_2}(x, i) (1 - 1/N) + \orange{r(\ceil{N\rho})}(1 - e^{-\theta_2} )\phi_*^{\theta_2}(x, i) (1 - 1/N)\indicator{x_i \leq N\rho}.
    \end{align*}
    We sum the three inequalities, and \orange{afterwards} we take 
    \begin{equation*}
        a_2 = -\kappa(e^{\theta_2}- 1) + (1 - 1/N)\orange{r(\ceil{N\rho})}(1- e^{-\theta_2})    
    \end{equation*}
    and
    \begin{equation*}
        b_2 = c + \orange{r(\ceil{N\rho})}e^{\theta_2N \rho}. 
    \end{equation*}
    We choose $N$ large enough to make $a_2 > 0$, which is what we needed.
\end{proof}
\orange{We fix a constant $c_2$ which satisfies Lemma \ref{exponential_moment_cooccupants}.} For any initial configuration $(x, i, j) \in \good \times [n] \times [n]$, we define successively the stopping times $(T_k)_{k \geq 1}$ as follows:
\begin{equation}\label{definition_of_Tk}
\begin{split}
    T_1 &= c_2(x_i \vee x_j) + 1 = c_2(X^*_I(0) \vee X^*_J(0)) + 1,\\
    T_k &= T_{k - 1} + c_2\left(X^*_{I}(T_{k-1})\vee X^*_{J}(T_{k-1}) \right) + 1.
\end{split}
\end{equation}
\begin{lemma}[Bound of $\tau$ by $T_k$]\label{induction1}
    \orange{Let $(T_k)_{k\geq 1}$ be defined as in \eqref{definition_of_Tk}.} Then there exists a dimension-free constant $c_3$ such that for any $(x, i, j) \in \good \times [n] \times [n]$, \orange{for any $k\geq 1$,}
    \begin{equation}\label{quickcoalescencelowoccupant}
       \probawithstartingpoint{x,i,j}{\tau \geq T_k;\restr{X}{[0, T_k]} = \restr{X^*}{[0, T_k]}} \leq (1-c_3)^k.
    \end{equation}
\end{lemma}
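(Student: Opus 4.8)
The plan is to prove the estimate by induction on $k$, with the base case $k=1$ and the inductive step both relying on the same one-step coalescence estimate: conditionally on the configuration $(X^*(T_{k-1}), I^*(T_{k-1}), J^*(T_{k-1}))$ being good and on the tagged particles not yet having coalesced, there is a dimension-free probability $c_3$ that they coalesce during the interval $[T_{k-1}, T_k]$. The key observation is that the length of this interval, $c_2(X^*_I(T_{k-1}) \vee X^*_J(T_{k-1})) + 1$, is exactly calibrated (via the constant $c_2$ from Lemma \ref{exponential_moment_cooccupants}) so that by time $T_{k-1} + c_2(X^*_I(T_{k-1}) \vee X^*_J(T_{k-1}))$ the exponential moment of $X^*_I \vee X^*_J$ is bounded by the dimension-free constant $K$. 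Hence at that time each of the two tagged particles has $O(1)$ cooccupants with good probability (by Markov's inequality applied to \eqref{laplacecoocupant}), so their jump rates $\Delta(X^*_I) + \kappa$-type quantities and in particular the chance that both jump to a common site within the remaining unit-length window are bounded below by a dimension-free constant.

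First I would make precise the one-step estimate. Working under $\Pbb_{x,i,j}$ with $(x,i,j)$ good and using the strong Markov property at $T_{k-1}$, I set $s = c_2(X^*_I(T_{k-1}) \vee X^*_J(T_{k-1}))$ and apply Lemma \ref{exponential_moment_cooccupants} to get $\esperancewithstartingpoint{X^*(T_{k-1}), I^*(T_{k-1}), J^*(T_{k-1})}{e^{\theta_2(X^*_I(T_{k-1}+s) \vee X^*_J(T_{k-1}+s))}} \leq K$; then Markov's inequality gives that, with probability at least $1 - 2K e^{-\theta_2 M}$ for a large enough dimension-free $M$, both tagged particles have at most $M$ cooccupants at time $T_{k-1}+s$. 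On the event that the tagged particles are still separate and each has at most $M$ cooccupants, I would lower-bound the probability of coalescence in the next unit of time: one clean way is to note that in the Poisson process $\Theta$ driving the tagged particles, on a window of length $1$, the site $I^*$ expels its tagged particle to a uniformly chosen destination at rate $\Delta(X^*_I) \geq 0$, and even if $\Delta \equiv 0$ on the relevant heights we may instead use that a non-tagged particle can carry $I^*$; but cleanest is to observe the tagged particle $J^*$ has a positive chance (at least $(1/n)(1-e^{-c})$ type, summing over the $n$ sites) of landing on the site currently occupied by $I^*$. Care is needed here because $\Delta$ may vanish; I would instead argue via the \emph{non-tagged} dynamics: during the unit window, with dimension-free probability the full configuration $X^*$ performs a move that brings $J^*$ onto the site of $I^*$ (or keeps $X^*$ good while a sequence of $O(1)$ moves achieves this, using that heights are $\leq M$). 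This yields a dimension-free lower bound $c_3 > 0$ on the one-step coalescence probability, uniformly over all good starting configurations with bounded tagged-particle occupations.

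Then the induction is immediate: writing $A_k = \{\tau \geq T_k,\ \restr{X}{[0,T_k]} = \restr{X^*}{[0,T_k]}\}$, I have $A_k \subseteq A_{k-1}$ (since $\tau \geq T_k$ implies $\tau \geq T_{k-1}$, and the trajectories agreeing on $[0,T_k]$ implies agreeing on $[0,T_{k-1}]$), and by the strong Markov property at $T_{k-1}$ together with the one-step estimate, $\probawithstartingpoint{x,i,j}{A_k} \leq (1-c_3)\probawithstartingpoint{x,i,j}{A_{k-1}}$, so $\probawithstartingpoint{x,i,j}{A_k} \leq (1-c_3)^k$. The base case $k=1$ is the same one-step estimate applied at time $0$, using that $(x,i,j)$ is good by hypothesis.

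The main obstacle I anticipate is the lower bound on the one-step coalescence probability when $\Delta$ vanishes on the relevant range of heights, so that the tagged particles themselves have zero intrinsic jump rate. Resolving this requires exploiting the \emph{non-tagged} moves of $X^*$: a tagged particle sitting on a site is dragged along whenever a non-tagged particle on that site is expelled, and since both tagged particles have $O(1)$ cooccupants and the configuration is good, there is a dimension-free-probability sequence of $O(1)$ non-tagged moves — each keeping $X^*$ good — that transports $J^*$ onto $I^*$'s site within one unit of time. Making this routing argument fully rigorous (checking that the intermediate configurations remain in $\good$, and that the required Poisson points occur in the right order within the window) is the technical heart of the proof; everything else is a clean Markov-chain induction.
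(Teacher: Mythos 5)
Your induction scaffolding, the use of Lemma~\ref{exponential_moment_cooccupants} to control $X^*_I \vee X^*_J$ at time $T_{k-1}+c_2(X^*_I(T_{k-1})\vee X^*_J(T_{k-1}))$, and the reduction to a one-step coalescence estimate over a unit window all match the paper's strategy. The gap is in the one-step estimate itself, and the obstacle you correctly identified --- that $\Delta$ may vanish on the current heights --- is not resolved by either of the two fixes you propose.

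First, the ``routing via non-tagged moves'' idea does not apply to this coupling: in the construction of Subsection~\ref{path_coupling}, a tagged particle changes site \emph{only} when a point of the auxiliary Poisson process $\Theta$ is accepted (i.e.\ when $u \leq \Delta(X_I(t-))$); it is never displaced by a jump of a non-tagged particle. So no sequence of moves of $X^*$ can transport $J^*$ onto $I^*$'s site. Second, the estimate ``$J^*$ lands on $I^*$'s site with probability of order $(1/n)(1-e^{-c})$'' is $O(1/n)$, not dimension-free --- the destination is uniform on $[n]$ and there is only one target site --- so an independent-jump argument cannot produce a dimension-free $c_3$. What actually makes the bound dimension-free is that $I^*$ and $J^*$ are driven by the \emph{same} process $\Theta$: a single point $(t,u,k)$ of $\Theta$ with $u \leq \Delta(X_I(t-)) \wedge \Delta(X_J(t-))$ moves both tagged particles to the same site $k$ simultaneously, giving coalescence. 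The paper's proof arranges, with dimension-free probability, for both $\Delta$'s to be strictly positive by first \emph{emptying} the two sites $i,j$ of non-tagged particles during $[0,1/2]$ (possible since both have $O(1)$ cooccupants and one can ask that no particle arrives at $i$ or $j$ during $[0,1]$, all occupants leave in $[0,1/2]$, and no $\Theta$-point fires in $[0,1/2]$); once empty, each tagged particle has jump rate $\Delta(0) = r(1) > 0$, and one then waits for a single small-$u$ $\Theta$-point in $[1/2,1]$ to make both jump to a common site. This emptying step is the missing idea; without it, or some substitute guaranteeing $\Delta(X^*_I) \wedge \Delta(X^*_J) > 0$, the one-step coalescence probability cannot be bounded below by a dimension-free constant when $r$ is locally constant.
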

\begin{proof}
    We only need to prove for $k = 1$, then use induction and the strong Markov property. \orange{Let $c_2$ be the constant used in the definition of $(T_k)_{k\geq 1}$, and let $K$ be the corresponding constant in Lemma \ref{exponential_moment_cooccupants},} and let $t = c_2(x_i \vee x_j) = T_1 - 1$.
    By \eqref{laplacecoocupant} and Chernoff's bound, 
    \begin{align*}
        \probawithstartingpoint{x,i,j}{X^*_I(t) \vee X^*_J(t) \geq a} \leq \dfrac{K}{e^{\theta_2a}},
    \end{align*}
    for any $a  > 0$. We choose $a$ large enough to make the right-hand side less than $1/2$. We will prove that for \orange{any} $(x, i, j) \in \good \times [n] \times [n]$ such that $x_i \vee x_j \orange{<} a$, there exists a dimension-free constant $c > 0$ such that  
    \begin{align}\label{conditional_quick_coalescence}
         \probawithstartingpoint{x, i, j}{\tau < 1} > c.
    \end{align}
    Assuming for the moment that we have \eqref{conditional_quick_coalescence}, let us prove the lemma. It is not hard to see that
    \begin{align*}
        & \probawithstartingpoint{x, i, j}{\tau > T_1, \restr{X}{[0, T_1]} = \restr{X^*}{[0, T_1]}} \\
        &\leq \probawithstartingpoint{x,i,j}{X^*_I(t) \vee X^*_J(t) \geq a}  + \probawithstartingpoint{x,i,j}{X^*_I(t) \vee X^*_J(t) < a, \restr{X}{[0, t]} = \restr{X^*}{[0, t]}, \tau \orange{>} t + 1} \\
        &\leq \probawithstartingpoint{x,i,j}{X^*_I(t) \vee X^*_J(t) \geq a}  + (1-c)\probawithstartingpoint{x,i,j}{X^*_I(t) \vee X^*_J(t) < a} \\
        &\leq 1 - c\probawithstartingpoint{x,i,j}{X^*_I(t) \vee X^*_J(t) < a}\\ &\leq 1 - c/2.
    \end{align*}
    In the second inequality, we have used \eqref{conditional_quick_coalescence} and the Markov property at time $t$. We deduce the claim simply by taking $c_3 = c/2$. It remains to prove \eqref{conditional_quick_coalescence}:\\ Suppose that $x$ is good and $x_i \vee x_j < a$. The scenario is that in a finite time, there is no particle arriving at $i$ and $j$, and the tagged particles wait for two sites $i, j$ to be completely emptied, and \orange{afterwards} they jump at the same time. More precisely, we use the mixed graphical construction for the process $X$ as follows: let $\Xi$ and $\Psi$ be two independent Poisson processes defined as in Graphical construction 1 and Graphical construction 2. Consider the process $X$ which starts from $x$ and has the following jumps: for each $(t, u, e) \in \Psi$ where $e \in \{i, j\}$,
     \begin{equation}
        X(t) := \begin{cases}
        X(t-) - \delta_l + \delta_e, & \text{if } \dfrac{1}{n} \sum\limits_{k = 1}^{l-1} r\left(X_k(t-)\right) < u \leq \dfrac{1}{n} \sum\limits_{k = 1}^l r(X_k(t-)),\orange{\text{ for some $l \in [n]$}}  \\
        X(t-) &\text{otherwise},\\
        \end{cases}
    \end{equation}
    and for each $(t, u, k, l) \in \Xi$ where $l \in [n] \setminus \{i, j\}$,
    
    \begin{equation}
        X(t) := \begin{cases}
        X(t-) - \delta_k + \delta_l, & \text{if } r(X_k(t-)) \geq u  \\
        X(t-) &\text{otherwise}.\\
        \end{cases}
    \end{equation}
    Then $X$ is a Markov process with generator $\Lcal$ on $\Omega$. Here we use $\Psi$ to indicate the jumps to two sites $i,j$ and $\Xi$ to indicate other jumps. Let
    \begin{itemize}
        \item $A = \{\Psi([0, 1] \times [0, \kappa] \times \{i, j\}) = 0\}$,
        \item $B_i = \{\Xi([0, 1/2] \times [0, r(1)]\times \{i\} \times [n]\setminus\{i, j\}) \geq a\}$,
        \item $B_j = \{\Xi([0, 1/2] \times [0, r(1)]\times \{j\} \times [n]\setminus\{i, j\}) \geq a\}$,
        \item $C = \{\Theta([0, 1/2] \times [0,\max\limits_{k \leq a} \Delta(k)] \times [n]) = 0)\} \cap \{\Theta([1/2, 1] \times [0, r(1)] \times [n]) \geq 1\}$.
    \end{itemize}
    \orange{In fact, $A$ is the event that there is no non-tagged particle arriving at two sites $i, j$ up to time $1$. If $A$ happens, then $B_i$ and $B_j$ ensure that all non-tagged particles of two sites $i, j$ jump to $[n] \setminus\{i,j\}$ in $[0, 1/2]$. If $A, B_i, B_j$ happen, then two sites $i, j$ are empty in $[1/2,1]$, and event $C$ ensures that the two tagged particles stay at $\{i,j\}$ in $[0, 1/2]$ then jump at the same time (hence coalescence) in $[1/2, 1]$. Moreover, the Poisson random variables used in the definitions of these events are independent and have parameters $\Theta(1)$.} \orange{We conclude that} the events above are independent and their probabilities are $\Theta(1)$. \deleted{Moreover, in their intersection, two tagged particles jump at the same time in $[1/2, 1]$, and hence}\orange{It follows that} \[\probawithstartingpoint{x, i, j}{\tau < 1} \geq \probawithstartingpoint{x, i, j}{A \cap B_i \cap B_j \cap C} = \Theta(1),\]
   which finishes our proof.
    
\end{proof}
\begin{lemma}[Exponential moment of $T_k$]\label{induction2}
\orange{Let $c_2$ and $(T_k)_{k \geq 1}$ be as in \eqref{definition_of_Tk}, and let $K$ be the corresponding constant in Lemma \ref{exponential_moment_cooccupants}.} Let $\theta_3 = \theta_2/c_2$. Then for any $(x, i, j) \in \good \times [n] \times [n]$, \orange{for any $k \geq 1$,}
   \begin{equation*}
        \esperancewithstartingpoint{x,i,j}{e^{\theta_3 T_k}} \leq n(Ke^{\theta_3})^k  (L_2+4).    
   \end{equation*}
\end{lemma}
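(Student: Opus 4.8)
The plan is to unfold the recursion \eqref{definition_of_Tk} into an explicit product and to bound each factor using Lemma \ref{exponential_moment_cooccupants}. Setting $Z_0 := x_i \vee x_j$ and $Z_l := X^*_I(T_l) \vee X^*_J(T_l)$ for $l \geq 1$, the recursion reads $T_1 = c_2 Z_0 + 1$ and $T_l = T_{l-1} + c_2 Z_{l-1} + 1$ for $l\geq 2$, hence $T_k = c_2\sum_{l=0}^{k-1} Z_l + k$. Since $\theta_3 c_2 = \theta_2$, this gives $e^{\theta_3 T_k} = e^{k\theta_3}\prod_{l=0}^{k-1} e^{\theta_2 Z_l}$, so it is enough to estimate $\esperancewithstartingpoint{x,i,j}{\prod_{l=0}^{k-1} e^{\theta_2 Z_l}}$.

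The heart of the argument is the claim that $\esperancewithstartingpoint{x,i,j}{e^{\theta_2 Z_l} \mid \Fcal_{T_{l-1}}} \leq K$ for every $l \geq 1$. I would derive this from the strong Markov property at the stopping time $T_{l-1}$: conditionally on $\Fcal_{T_{l-1}}$ the state $(x',i',j') := (X^*(T_{l-1}), I^*(T_{l-1}), J^*(T_{l-1}))$ lies in $\good \times [n] \times [n]$, because $X^*$ is constrained to staying good; and $Z_l$ equals $X^*_I \vee X^*_J$ of the shifted process evaluated at the time $s^* := c_2 Z_{l-1} + 1 = c_2(x'_{i'} \vee x'_{j'}) + 1$, which is $\Fcal_{T_{l-1}}$-measurable and satisfies $s^* \geq c_2(x'_{i'} \vee x'_{j'})$. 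Lemma \ref{exponential_moment_cooccupants}, applied from $(x',i',j')$ at the fixed time $s^*$, then yields the bound $K$. Granting this, I would peel off the factors of the product one at a time: since $\prod_{l=0}^{k-2} e^{\theta_2 Z_l}$ is $\Fcal_{T_{k-2}}$-measurable (each $Z_l$ is $\Fcal_{T_l}$-measurable, and each $T_{l+1}$ is a deterministic function of $T_l$ and $Z_l$), the tower property gives $\esperancewithstartingpoint{x,i,j}{\prod_{l=0}^{k-1} e^{\theta_2 Z_l}} \leq K\, \esperancewithstartingpoint{x,i,j}{\prod_{l=0}^{k-2} e^{\theta_2 Z_l}}$, and iterating down to $l = 1$ leaves $K^{k-1} e^{\theta_2 Z_0}$.

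To conclude I would bound the leftover initial factor using goodness of $x$: $\sum_{l=1}^n e^{\theta_2 x_l} = n\phi^{\theta_2}(x) \leq n(L_2+4)$, so $e^{\theta_2 Z_0} = e^{\theta_2(x_i \vee x_j)} \leq n(L_2+4)$. Combining everything and using $K \geq 1$ (automatic, since $e^{\theta_2 Z_l} \geq 1$) gives $\esperancewithstartingpoint{x,i,j}{e^{\theta_3 T_k}} \leq e^{k\theta_3} K^{k-1} n(L_2+4) \leq n(Ke^{\theta_3})^k (L_2+4)$.

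The point requiring the most care is the conditional estimate on $e^{\theta_2 Z_l}$: one must observe that the increment $T_l - T_{l-1}$ is itself a \emph{deterministic} function of the configuration at time $T_{l-1}$, so that after conditioning on $\Fcal_{T_{l-1}}$ the quantity $Z_l$ becomes a functional of the shifted Markov process at a fixed time — precisely the form required by Lemma \ref{exponential_moment_cooccupants} — and one must also use that $X^*$ never leaves $\good$, so the lemma's hypothesis on the starting configuration is preserved at every $T_{l-1}$. Equivalently, the whole computation can be phrased as the statement that $e^{\theta_3 T_k}/(Ke^{\theta_3})^k$ is a supermartingale in $k$ with respect to the filtration $(\Fcal_{T_{k-1}})_{k\geq 1}$, started from the deterministic value $e^{\theta_2 Z_0}/K \leq n(L_2+4)/K$.
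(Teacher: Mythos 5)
Your proof is correct and takes essentially the same route as the paper: the paper runs a direct induction, obtaining $\esperancewithstartingpoint{x,i,j}{e^{\theta_3 T_k}} \le Ke^{\theta_3}\,\esperancewithstartingpoint{x,i,j}{e^{\theta_3 T_{k-1}}}$ by conditioning on $\Fcal_{T_{k-2}}$ and applying the strong Markov property together with Lemma \ref{exponential_moment_cooccupants}, whereas you unfold $e^{\theta_3 T_k}$ into the product $e^{k\theta_3}\prod_{l=0}^{k-1}e^{\theta_2 Z_l}$ and peel off one factor at a time using the identical conditioning and the identical invocation of that lemma. The two arguments coincide step for step (your supermartingale reformulation is just another name for the paper's induction inequality), and your care about the increment $T_l-T_{l-1}$ being a deterministic functional of the state at $T_{l-1}$ and about $X^*$ remaining in $\good$ matches what the paper uses implicitly.
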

\begin{proof}
    \orange{By convention, let $T_0 = 0$. For any $k \geq 2$,} note that $T_{k-1}$ is $\Fcal_{T_{k-2}}$-measurable by its definition. Conditionally on $\Fcal_{T_{k-2}}$, we have 
    \begin{align*}
        \esperancewithstartingpoint{x,i,j}{e^{\theta_3T_k} } &= \esperancewithstartingpoint{x,i,j}{\esperancewithstartingpoint{x,i,j}{e^{\theta_3T_k}|\Fcal_{T_{k-2}}}}\\
        &= \esperancewithstartingpoint{x,i,j}{e^{\theta_3 (T_{k-1} + 1)} \esperancewithstartingpoint{x,i,j}{e^{\theta_3c_2(X^*_I(T_{k-1}) \vee X^*_J(T_{k-1}))}|\Fcal_{T_{k-2}}}}\\
        &= \esperancewithstartingpoint{x,i,j}{e^{\theta_3 (T_{k - 1} + 1)} \esperancewithstartingpoint{X^*(T_{k - 2}),I^*(T_{k - 2}),J^*(T_{k - 2})}{e^{\theta_2(X^*_{I}(T_1)\vee X^*_{J}(T_1))  }   }} \\
        &\leq Ke^{\theta_3} \esperance{e^{\theta_3 T_{k - 1}} },
    \end{align*}
    where the inequality is due to \eqref{laplacecoocupant}. Moreover, 
    \begin{align*}
        \esperancewithstartingpoint{x, i, j}{e^{\theta_3 T_1}} = \esperancewithstartingpoint{x,i,j}{e^{\theta_2(x_i \vee x_j) + \theta_3 }} \leq n(L_2+4)e^{\theta_3},  
    \end{align*}
    \orange{where the last inequality is due to the fact that $x \in \good$}. The claim is then obtained by induction.
\end{proof}

\begin{corollary}[Quick coalescence while staying good]\label{staying_good_coalescence}
    There exists a dimension-free constant $\alpha_3$ such that for any $(x, i, j) \in \good \times [n] \times [n]$,
    \begin{align}\label{bound_tau_from_goodset}
         \probawithstartingpoint{x, i, j}{\tau > \alpha_3 \logn;X(t) = X^*(t), \, \forall t \leq \alpha_3 \logn} = \Ocal{n^{-3}}.
    \end{align}
\end{corollary}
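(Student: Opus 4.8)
The plan is to combine the two preceding lemmas—Lemma \ref{induction1} (geometric decay of the probability that coalescence has not happened by time $T_k$ while $X$ stays good) and Lemma \ref{induction2} (exponential moment bound on $T_k$)—via a union bound over $k$. First I would fix an integer $k = k(n)$ to be chosen later, and split the event $\{\tau > \alpha_3 \log n;\ X(t) = X^*(t),\ \forall t \leq \alpha_3\log n\}$ according to whether $T_k \leq \alpha_3 \log n$ or $T_k > \alpha_3 \log n$. On the first piece, if $T_k \leq \alpha_3 \log n$ then $\{\tau > \alpha_3 \log n;\ X = X^* \text{ on } [0,\alpha_3\log n]\} \subset \{\tau \geq T_k;\ \restr{X}{[0,T_k]} = \restr{X^*}{[0,T_k]}\}$, whose probability is at most $(1-c_3)^k$ by Lemma \ref{induction1}. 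On the second piece we just bound by $\probawithstartingpoint{x,i,j}{T_k > \alpha_3 \log n}$, and by Markov's inequality applied to $e^{\theta_3 T_k}$ together with Lemma \ref{induction2}, this is at most $n (Ke^{\theta_3})^k (L_2+4) e^{-\theta_3 \alpha_3 \log n} = (L_2+4)(Ke^{\theta_3})^k n^{1 - \theta_3 \alpha_3}$.

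Next I would choose the parameters so that both terms are $\Ocal{n^{-3}}$. Take $k = \lceil C_1 \log n \rceil$ for a constant $C_1$ large enough that $(1-c_3)^{C_1 \log n} = \Ocal{n^{-3}}$, i.e. $C_1 \geq 3/\log\frac{1}{1-c_3}$; this handles the first term. With $k$ now of order $\log n$, the factor $(Ke^{\theta_3})^k = n^{C_1 \log(Ke^{\theta_3}) + o(1)}$, so the second term is of order $n^{1 + C_1\log(Ke^{\theta_3}) - \theta_3 \alpha_3 + o(1)}$; choosing $\alpha_3$ large enough (depending only on the dimension-free constants $c_3, K, \theta_3, C_1$) makes the exponent $\leq -3$, so the second term is also $\Ocal{n^{-3}}$. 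Adding the two pieces yields the claim. All constants introduced are dimension-free, so $\alpha_3$ is dimension-free as required.

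The only subtlety I anticipate is bookkeeping: one must make sure that $\alpha_3$ can be chosen \emph{after} $C_1$ (which is fixed once $c_3$ is fixed), and that $K, \theta_3, c_2$ are all dimension-free—this is guaranteed by Lemmas \ref{exponential_moment_cooccupants}, \ref{induction1}, \ref{induction2}. There is no genuine analytic obstacle here; the corollary is essentially a Borel–Cantelli-style truncation of the two quantitative lemmas, and the main point is simply that the geometric gain $(1-c_3)^k$ over $k \sim \log n$ steps beats the polynomial-in-$n$ losses coming from the union over sites (the factor $n$ in Lemma \ref{induction2}) and from the per-step growth $Ke^{\theta_3}$ of the exponential moment of $T_k$.
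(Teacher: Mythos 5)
Your proposal is correct and matches the paper's own proof essentially line for line: the same split of the event according to $T_k \lessgtr \alpha_3 \log n$, the same use of Lemma \ref{induction1} on the first piece and of Lemma \ref{induction2} plus Markov's (Chernoff's) inequality on the second, and the same choice of $k \asymp \log n$ first (to kill the geometric term) and then $\alpha_3$ large (to kill the polynomial term). The bookkeeping concern you flag—that $\alpha_3$ is chosen after $C_1$, and that all constants involved are dimension-free—is indeed the only thing to check, and it goes through exactly as you describe.
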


\begin{proof}
    \orange{For any $\alpha_3 > 0$ and $k \in \Zbb_+$,} the left-hand side is upper bounded by  
    \begin{align*}
        & \probawithstartingpoint{x, i, j}{T_k > \alpha_3 \log n} + \probawithstartingpoint{x,i,j}{\tau \geq T_k;\restr{X}{[0, T_k]} = \restr{X^*}{[0, T_k]}}\\
        &\leq n(Ke^{\theta_3 })^k\orange{(L_2+4)} n^{-\theta_3 \alpha_3 } + (1-c_3)^k,
    \end{align*}
    \deleted{where the last inequality is} \orange{for some dimension-free constants $K, \theta_3, c_3$,} due to Lemma \ref{induction1}, Lemma \ref{induction2}, and Chernoff's bound.
    We choose $k = \Ocal{\logn}$ such that $(1 - c_3)^k = \Ocal{n^{-3}}$ and $\alpha_3$ large enough such that $(Ke^{\theta_3 })^k n^{1 - \theta_3 \alpha_3} = \Ocal{n^{-3}}$ to get what we wanted.
\end{proof}
Now we can finally prove the quick coalescence for a configuration $x$ such that $\norminfty{x} \leq \alpha_1\logn$.
\begin{proposition}[Quick coalescence]\label{coalescence_logn}
   \orange{Recall that $\alpha_1$ is fixed in this subsection.} There exists a dimension-free constant $\alpha$ such that for any $x$ such that $\norminfty{x} \leq \alpha_1\logn$, for any $i, j \in [n]$, 
    \begin{align}\label{quick_coalescence}
       \probawithstartingpoint{x, i, j}{\tau \geq \alpha \logn} = \Ocal{n^{-3}}.
   \end{align}
\end{proposition}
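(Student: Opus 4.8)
The strategy is to chain together the three facts already established in this section: by Proposition~\ref{moment_hitting_time} the chain reaches $\{\phithetatwo \le L_2\}$ within time $\Ocal{\logn}$; by Proposition~\ref{stability} it then stays in $\good := \goodset$ for $\Omega((\logn)^2)$; and by Corollary~\ref{staying_good_coalescence}, once the triple $(X,I,J)$ starts from a good configuration, the tagged particles coalesce within time $\alpha_3\logn$ unless the driving chain $X$ leaves $\good$. Gluing these, with $\alpha := c + \alpha_3$ for a suitable dimension-free $c$, will give the claim.

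Concretely, I would let $T$ be the hitting time of $\{\phithetatwo \le L_2\}$. Since $\norminfty{x}\le\alpha_1\logn$ we have $\phithetatwo(x) \le n^{\theta_2\alpha_1}$, so Proposition~\ref{moment_hitting_time} together with Markov's inequality yields $\probawithstartingpoint{x}{T \ge c\logn} = \Ocal{n^{\theta_2\alpha_1 - \beta_2 c}}$, which is $\Ocal{n^{-3}}$ once $c$ is a large enough dimension-free constant; set $E_1 := \{T \le c\logn\}$, an $\Fcal_T$-measurable event. Next, Proposition~\ref{stability} says that the event $E_2 := \{\phithetatwo(X(s)) \le L_2 + 4 \text{ for all } s \in [T,(\logn)^2]\}$ has $\probawithstartingpoint{x}{E_2^c} = \Ocal{n^{-3}}$; note that on $E_1$, and for $n$ large enough that $c\logn + \alpha_3\logn \le (\logn)^2$, the event $E_2$ forces $X$ to stay good throughout $[T, T + \alpha_3\logn]$.

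The heart of the argument is a strong-Markov restart at $T$. If $\tau < T \le c\logn$ then $\tau < \alpha\logn$ and there is nothing to prove, so we may work on $\{\tau \ge T\}\cap E_1$, on which $X(T) \in \{\phithetatwo \le L_2\} \subset \good$ and $I(T) \ne J(T)$, i.e.\ $(X(T), I(T), J(T)) \in \good \times [n] \times [n]$. Restarting at $T$ and coupling the restarted process with the constrained process $(X^*, I^*, J^*)$ launched from the same state, the strong Markov property and Corollary~\ref{staying_good_coalescence} give
\[
\probawithstartingpoint{x,i,j}{E_1,\ \tau \ge T,\ \tau - T \ge \alpha_3\logn,\ X(T+t) = X^*(t)\ \forall\, t \le \alpha_3\logn} = \Ocal{n^{-3}}.
\]
On $E_2$ the identity $X(T+t) = X^*(t)$ holds automatically for all $t \le \alpha_3\logn$, since $X$ and $X^*$ agree until $X$ turns bad and $X$ is good on $[T, T+\alpha_3\logn]$; and on $E_1 \cap \{\tau \ge \alpha\logn\}$ we have $\tau - T \ge \alpha_3\logn$. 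Hence $\{\tau \ge \alpha\logn\} \cap E_1 \cap E_2$ is contained in the event above, and a union bound over $E_1^c$, $E_2^c$ and this event yields $\probawithstartingpoint{x,i,j}{\tau \ge \alpha\logn} = \Ocal{n^{-3}}$.

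The only delicate point is the bookkeeping around the stopping time $T$: one must check that $E_1$ and $\{\tau \ge T\}$ are $\Fcal_T$-measurable (so the strong Markov property applies), that the contribution of $\{\tau < T\}$ is vacuous, and — the conceptual step — that the ``staying good'' event $E_2$ is exactly what converts Corollary~\ref{staying_good_coalescence}, which only guarantees \emph{coalescence or departure from $\good$}, into genuine coalescence, because on $E_2$ the driving chain never leaves $\good$ during the window $[T, T+\alpha_3\logn]$.
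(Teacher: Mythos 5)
Your proposal is correct and follows essentially the same route as the paper: decompose the event by (i) the hitting time $T$ of $\{\phithetatwo \le L_2\}$ exceeding $\Ocal{\logn}$, controlled via Proposition~\ref{moment_hitting_time}; (ii) the trajectory leaving $\good$ on $[T,(\logn)^2]$, controlled via Proposition~\ref{stability}; and (iii) the residual event, handled by a strong-Markov restart at $T$, coupling with the constrained process $(X^*,I^*,J^*)$, and invoking Corollary~\ref{staying_good_coalescence}. The paper states the decomposition as a sum of three probabilities rather than an explicit union bound, but the content is identical; your extra remarks about $\Fcal_T$-measurability and the role of $E_2$ in upgrading ``coalescence or departure from $\good$'' to genuine coalescence are sound and simply make explicit what the paper leaves implicit.
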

\begin{proof}
    Let $\alpha = \alpha_4 + \alpha_3$, where \orange{$\alpha_3$ is as in Corollary \ref{staying_good_coalescence}, and} $\alpha_4$ is a dimension-free constant that we will choose later. Let $T$ be the hitting time of the set $\{\phi^{\theta_2} \leq L_2\}$.
    The probability that we want to estimate does not exceed the following sum: 
    \begin{equation}
        \begin{split}
             &\probawithstartingpoint{x}{T \geq \alpha_4 \logn} \\
        &+ \probawithstartingpoint{x}{\sup_{s\in [T, (\logn)^2]} \phithetatwo(X(s)) > L_2+4}   \\
        &+ \probawithstartingpoint{x, i, j}{T < \alpha_4 \logn; \sup_{s\in [T; (\logn)^2]} \phithetatwo(X(s)) \leq L_2+4; \tau \geq T +  \alpha_3\logn }.
        \end{split}
    \end{equation}
    We simply prove that all the terms are $\Ocal{n^{-3}}$:
    \begin{enumerate}
        \item \textbf{The first term:} By \eqref{tempsdattentelaplace} and Chernoff's bound,   it is upper bounded by $\Ocal{n^{\theta_2 \alpha_1 - \beta_2\alpha_4}}$, which is $\Ocal{n^{-3}}$ when $\alpha_4$ is large enough. 
        \item \textbf{The second term} is $\Ocal{n^{-3}}$ by Proposition \ref{stability}.
        \item \textbf{The last term:} From the time $T$ onward, we couple the processes $(X,I, J)$ and $(X^*, I^*, J^*)$ starting from $(X(T), I(T), J(T))$ by using the same Poisson processes for their graphical constructions. We observe that up to time $(\logn)^2$, $(X,I, J)$ and $(X^*, I^*, J^*)$ coincide. Therefore, this term is $\Ocal{n^{-3}}$ by the strong Markov property at time $T$ and \eqref{bound_tau_from_goodset}, which finishes our proof.
    \end{enumerate}
\end{proof}
Now we can prove Proposition \ref{quick_convergence}.
\begin{proof}[Proof of Proposition \ref{quick_convergence}] 
    Let $t = \alpha \logn$, \orange{where $\alpha$ is as in Proposition \ref{coalescence_logn}}. We say that two configurations are adjacent if they differ only by one jump. \eqref{quick_coalescence} implies that for any $x, y$ such that $x, y$ are adjacent and $\norminfty{x} \vee \norminfty{y} \leq \alpha_1 \logn$, $\dtv{P^t_x}{P^t_y} = \Ocal{n^{-3}}$. Now for $x, y$ arbitrary such that $\norminfty{x} \vee \norminfty{y} \leq \alpha_1\logn$, we can always connect $x$ and $y$ by a path, \ie a sequence $(\omega_0, \omega_1, ..., \omega_k) $ in $\Omega$ such that $\omega_0 = x, \omega_k = y$, and $\omega_{l-1}$ is adjacent to $\omega_l$ for $1 \leq l \leq k$. Furthermore, we can pick one of the shortest paths to make sure that $k\leq m$ and \[\max\limits_{1 \leq l \leq k} \norminfty{\omega_l} \leq \norminfty{x} \vee \norminfty{y} \leq \alpha_1\logn.\] Then by triangle inequality, 
    \[\dtv{P^t_x}{P^t_y} \leq \sum_{u = 1}^k \dtv{P^t_{\omega_{u-1}}}{P^t_{\omega_u}} \leq m \Ocal{n^{-3}} = \Ocal{n^{-2}},\]
    \orange{where the last equality is due to \eqref{densitybounded}.} By stationarity of $\pi$ and convexity of $\dtv{\cdot}{\cdot}$, 
    \begin{align*}
        \dtv{P^t_x}{\pi} &\leq \sum_{y \in \Omega} \pi(y) \dtv{P^t_x}{P^t_y}\\
        &= \sum_{\{y: \norminfty{y} > \alpha_1 \logn\}} \pi(y) \dtv{P^t_x}{P^t_y} + \sum_{\{y: \norminfty{y} \leq \alpha_1 \logn\}} \pi(y) \dtv{P^t_x}{P^t_y}\\
        &\leq \pi(\norminfty{y} > \alpha_1 \logn) + \pi(\norminfty{y} \leq \alpha_1 \logn) \Ocal{n^{-2}}.
    \end{align*}
    Moreover, by letting $t \to \infty$ in Proposition \ref{key_proposition}, we obtain $\pi(\norminfty{y} > \alpha_1 \logn) = \Ocal{n^{-2}}$. Combining it with the above inequality, we deduce the claim.
\end{proof}
\section{The Poincaré constant} 
This section is devoted to proving Theorem \ref{spectral_gap} and Corollary \ref{application_spectral_gap}. We first recall a classical lemma for general Markov processes:
\begin{lemma}[Lower bound on Poincaré constant]\label{spectral_gap_lemma}
    Let $\Omega$ be a finite state space, and let $\Lcal$ be an irreducible reversible Markov generator on $\Omega$. Fix $\gamma > 0$, and suppose that for any $(x, y) \in \Omega \times \Omega$ such that $\Lcal(x, y) > 0$, there exists a coupling $\Pbb_{x, y}$ of two processes with generator $\Lcal$ starting from $x$ and $y$ such that 
    \[\esperancewithstartingpoint{x, y}{e^{\gamma \tau}} < \infty,\]
    where $\tau$ is the coalescence time of the two processes. Then $\lambda_*(\Lcal) > \gamma$.
\end{lemma}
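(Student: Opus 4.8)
The plan is to show that the exponential-moment bound on the coalescence time forces the heat semigroup to contract a suitable Lipschitz seminorm at a rate strictly larger than $\gamma$, and then to feed an eigenfunction achieving $\lambda_*(\Lcal)$ into this contraction. First I would equip $\Omega$ with the graph metric $d$ for which $x$ and $y$ are neighbours precisely when $\Lcal(x,y)>0$; this metric is finite-valued because $\Lcal$ is irreducible, and I write $D := \max_{x,y\in\Omega} d(x,y) < \infty$ for its diameter. For $f:\Omega\to\Rbb$ I set $\mathrm{Lip}(f) := \max\{\,|f(x)-f(y)| : \Lcal(x,y)>0\,\}$; then $|f(u)-f(v)|\le \mathrm{Lip}(f)\,d(u,v)$ for all $u,v$, and, again by irreducibility, $\mathrm{Lip}(f)=0$ only if $f$ is constant.

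Next I would record the only analytic input beyond the hypothesis: since there are finitely many ordered pairs $(x,y)$ with $\Lcal(x,y)>0$, the constant $C := \max_{\Lcal(x,y)>0}\esperancewithstartingpoint{x,y}{e^{\gamma\tau}}$ is finite, and for each such pair $e^{\gamma t}\,\probawithstartingpoint{x,y}{\tau>t} \le \esperancewithstartingpoint{x,y}{e^{\gamma\tau}\indicator{\tau>t}}$, which tends to $0$ as $t\to\infty$ by dominated convergence. Hence $\max_{\Lcal(x,y)>0}\probawithstartingpoint{x,y}{\tau>t} = o(e^{-\gamma t})$. I may assume the coupling is coalescent, \ie the two coordinates coincide for all $t\ge\tau$; if not, one runs them together after their first meeting, which changes neither $\tau$ nor the marginals.

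Then comes the core estimate. Writing $P^t f(x) = \esperancewithstartingpoint{x}{f(X(t))}$ and, for neighbours $x\sim y$, letting $(X,Y)$ denote the coupled pair under $\Pbb_{x,y}$, one has
\[
    \big|P^t f(x) - P^t f(y)\big| = \big|\esperancewithstartingpoint{x,y}{f(X(t)) - f(Y(t))}\big| \le \mathrm{Lip}(f)\,\esperancewithstartingpoint{x,y}{d(X(t),Y(t))} \le \mathrm{Lip}(f)\,D\,\probawithstartingpoint{x,y}{\tau>t},
\]
using that $d(X(t),Y(t))=0$ on $\{\tau\le t\}$ and $d\le D$ otherwise. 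Maximising over neighbouring pairs gives $\mathrm{Lip}(P^t f) \le D\,\mathrm{Lip}(f)\cdot o(e^{-\gamma t})$. Now take $f$ to be a minimiser in the variational definition of $\lambda_* = \lambda_*(\Lcal)$: such an $f$ exists since $\Omega$ is finite, is non-constant, and (as the Euler--Lagrange equation for the Rayleigh quotient) satisfies $\Lcal f = -\lambda_* f$, hence $P^t f = e^{-\lambda_* t} f$ and $\mathrm{Lip}(P^t f) = e^{-\lambda_* t}\,\mathrm{Lip}(f)$ with $\mathrm{Lip}(f)>0$. Comparing the two expressions for $\mathrm{Lip}(P^t f)$ yields $e^{-\lambda_* t} = o(e^{-\gamma t})$, i.e. $e^{(\gamma-\lambda_*)t}\to 0$ as $t\to\infty$, which is possible only if $\gamma<\lambda_*$.

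The step I expect to be the real (if modest) obstacle is obtaining the \emph{strict} inequality: a plain Markov/Chernoff bound gives only $\probawithstartingpoint{x,y}{\tau>t}\le C e^{-\gamma t}$ and hence $\lambda_*\ge\gamma$, so one must notice that $\esperancewithstartingpoint{x,y}{e^{\gamma\tau}}<\infty$ self-improves to $\probawithstartingpoint{x,y}{\tau>t}=o(e^{-\gamma t})$ via dominated convergence. Everything else is routine bookkeeping: checking that $\mathrm{Lip}$ is a seminorm vanishing only on constants, that $D$ and $C$ are finite because $\Omega$ is finite, and that the variational problem defining $\lambda_*$ is attained at an eigenfunction of $\Lcal$.
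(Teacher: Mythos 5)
Your proof is correct, and it takes a genuinely different route from the paper's. The paper works in total-variation distance: it applies Markov's inequality to get $\probawithstartingpoint{x,y}{\tau>t}\le A e^{-\gamma t}$ for neighbouring pairs, chains this along a shortest path of length at most $|\Omega|$ to bound $\dtv{P^t_x}{P^t_y}$ for arbitrary $x,y$ by the triangle inequality, passes to $\dtv{P^t_x}{\pi}$ by convexity and stationarity, and then invokes the reversible-chain identity $\lambda_*=\lim_{t\to\infty}-\tfrac1t\log\max_x\dtv{P^t_x}{\pi}$. You instead equip $\Omega$ with the graph metric, show the heat semigroup contracts the Lipschitz seminorm at rate $o(e^{-\gamma t})$, and then feed in an eigenfunction $f$ of $-\Lcal$ at eigenvalue $\lambda_*$, for which $\mathrm{Lip}(P^t f)=e^{-\lambda_* t}\mathrm{Lip}(f)$ \emph{exactly}. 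The Lipschitz seminorm does the path-chaining bookkeeping implicitly, so your argument has no separate chaining step. More importantly, your observation that dominated convergence upgrades Markov's $O(e^{-\gamma t})$ to $o(e^{-\gamma t})$ is exactly what delivers the \emph{strict} inequality $\lambda_*>\gamma$: since $e^{-\lambda_* t}\mathrm{Lip}(f)\le D\,\mathrm{Lip}(f)\cdot o(e^{-\gamma t})$ with $\mathrm{Lip}(f)>0$, one gets $e^{(\gamma-\lambda_*)t}\to 0$, forcing $\gamma<\lambda_*$. The paper's proof as written, which lets $t\to\infty$ in $-\tfrac1t\log\max_x\dtv{P^t_x}{\pi}\ge\gamma-\tfrac1t(\log A+\log|\Omega|)$, only produces the non-strict $\lambda_*\ge\gamma$ (which is all that is actually used downstream). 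Both arguments use reversibility: the paper via the spectral characterisation of the worst-case total-variation decay rate, and you via the spectral theorem guaranteeing that the Rayleigh-quotient minimiser is an eigenfunction satisfying $\Lcal f=-\lambda_* f$.
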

\begin{proof}
    Let $A = \max\limits_{x, y: \Lcal(x, y) > 0} \esperancewithstartingpoint{x, y}{e^{\gamma \tau}}$. Then for any $x, y$ such that $\Lcal(x, y) > 0$, 
    \begin{equation*}
        \dtv{P^t_x}{P^t_y} \leq \probawithstartingpoint{x, y}{\tau > t} \leq e^{-\gamma t}\esperancewithstartingpoint{x, y}{e^{\gamma \tau}} \leq Ae^{-\gamma t} .
    \end{equation*}
    Now for $(x, y) \in \Omega \times \Omega$ arbitrary, as $\Lcal$ is irreducible, we can connect $x$ and $y$ by a path, \ie a sequence $(\omega_0, \omega_1, ..., \omega_k)$ in $\Omega$ such that $\omega_0 = x, \omega_k = y$, and $\Lcal(\omega_{l - 1}, \omega_l) > 0$, for $1 \leq l \leq k$. Picking one of the shortest path ensures that $k \leq |\Omega|$. Hence by the triangle inequality, 
    \begin{equation*}
        \dtv{P^t_x}{P^t_y} \leq \sum_{l = 1}^k \dtv{P^t_{\omega_{l-1}}}{P^t_{\omega_l}} \leq A|\Omega|e^{-\gamma t}.
    \end{equation*}
    By stationarity of $\pi$ and convexity of $\dtv{\cdot}{\cdot}$, 
    \begin{equation*}
        \dtv{P^t_x}{\pi} \leq \sum_{y \in \Omega} \pi(y)\dtv{P^t_x}{P^t_y} \leq A|\Omega| e^{-\gamma t}. 
    \end{equation*}
    We deduce that
    \begin{equation*}
        -\dfrac{1}{t} \log \max_{x \in \Omega} \dtv{P^t_x}{\pi} \geq \gamma - \dfrac{1}{t} (\log A + \log|\Omega|).
    \end{equation*}
    The claim is obtained simply by letting $t \to \infty$.
\end{proof}
Thanks to Lemma \ref{spectral_gap_lemma}, in order to prove $\lambda_* = \Omega(1)$, we just need to prove the following:
\begin{proposition}[Exponential moment of coalescence time]\label{spectral_gap_lower_bound}
    There exists a dimension-free constant $\gamma$ such that for all $(x, i, j) \in \Omega\times [n] \times [n]$, for the coupling of two Zero-Range processes starting from $x + \delta_i,\, x+ \delta_j$ using tagged particles as described above,
    \begin{align}
        \esperancewithstartingpoint{x, i, j}{e^{\gamma \tau}} < \infty.
    \end{align}
\end{proposition}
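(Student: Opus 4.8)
The plan is to derive this exponential-moment bound from the dissolution estimates of Subsection~\ref{Dissolution} together with the quick-coalescence estimate of Proposition~\ref{coalescence_logn}, by means of a renewal argument. Observe first that, by Lemma~\ref{spectral_gap_lemma}, only finiteness for each fixed $n$ is needed: the quantity $\mathbb{E}_{x,i,j}[e^{\gamma\tau}]$ may be allowed to grow with $n$ and to depend on $x$, and it is $\gamma$ alone that must be dimension-free.

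I would first fix a dimension-free $\theta>0$ with $1/\theta<\alpha_1$ (e.g. $\theta=2/\alpha_1$, with $\alpha_1$ from Proposition~\ref{key_proposition}), set $\beta=1$, let $L=L(\theta,\beta)$ be as in Lemma~\ref{boundL}, and put $G_\star:=\{x\in\Omega:\phi^\theta(x)\le L\}$. Since $\phi^\theta(x)\ge n^{-1}e^{\theta\norminfty{x}}$, membership in $G_\star$ forces $\norminfty{x}\le(\log n+\log L)/\theta$, so the choice $1/\theta<\alpha_1$ makes this $\le\alpha_1\log n$ once $n$ is large, i.e. $G_\star\subseteq\{\norminfty{\cdot}\le\alpha_1\log n\}$ for $n\ge n_0$. (For the finitely many $n<n_0$ the state space is finite and coalescence is almost surely finite by the two-site-emptying scenario of Lemma~\ref{induction1}, so $\tau$ has an exponential tail and the statement is trivial; we therefore assume $n\ge n_0$.) Then I record three facts: \emph{(a)} by Proposition~\ref{moment_hitting_time} the hitting time $H_{G_\star}$ of $G_\star$ satisfies $\mathbb{E}_y[e^{\beta H_{G_\star}}]\le e^\theta\phi^\theta(y)/L$ for every $y\in\Omega$; \emph{(b)} starting from any $y\in G_\star$ and running $X$ for time $\alpha\log n$ (with $\alpha$ as in Proposition~\ref{coalescence_logn}), each site gains at most $\mathrm{Poisson}(\kappa\alpha\log n)$ particles by \eqref{mf_jump_rate}, whence $\mathbb{E}_y[\phi^\theta(X(\alpha\log n))]\le P(n)$ for a fixed polynomial $P$; \emph{(c)} by Proposition~\ref{coalescence_logn}, for any $y\in G_\star$ and any $i,j$ one has $\mathbb{P}_{y,i,j}[\tau\ge\alpha\log n]=\mathcal{O}(n^{-3})$.

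Next I would set up the renewal. Let $W_0$ be the hitting time of $G_\star$ by $X$, and inductively, provided the tagged particles have not coalesced by time $W_{k-1}+\alpha\log n$, let $W_k:=W_{k-1}+\alpha\log n+\big(\text{hitting time of }G_\star\text{ by the process shifted at }W_{k-1}+\alpha\log n\big)$; let $N$ be the least $k$ with coalescence before $W_k+\alpha\log n$, so that $\tau\le W_N+\alpha\log n$. Since $X(W_k)\in G_\star$ for every $k$, the strong Markov property and fact~\emph{(c)} give $\mathbb{P}[N\ge k+1\mid\mathcal{F}_{W_k}]=\mathcal{O}(n^{-3})$ on $\{N\ge k\}$, while the strong Markov property together with facts~\emph{(a)} and \emph{(b)} bound the conditional exponential moment (of exponent $\beta$, hence via Jensen of any exponent $\gamma\le\beta$) of the re-hitting increment $W_k-W_{k-1}-\alpha\log n$ by a polynomial in $n$. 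Peeling off the rounds one at a time with the tower property — and inserting a Cauchy--Schwarz split to decouple, within a single round, the failure indicator from the subsequent re-hitting time, which are carried by overlapping stretches of the path — one obtains $\mathbb{E}[e^{\gamma W_{k}}\mathbf{1}_{N\ge k}]\le\lambda_n\,\mathbb{E}[e^{\gamma W_{k-1}}\mathbf{1}_{N\ge k-1}]$ with $\lambda_n=\mathcal{O}(n^{-3/2})\cdot n^{\gamma\alpha}\cdot P(n)^{\gamma/\beta}$. Choosing the dimension-free constant $\gamma>0$ small enough (so that $\gamma\le\beta/2$ and the net exponent of $n$ in $\lambda_n$ is negative) makes $\lambda_n=o(1)$, so the series $\sum_k\mathbb{E}[e^{\gamma W_k}\mathbf{1}_{N\ge k}]$ converges geometrically; together with $\mathbb{E}_x[e^{\gamma W_0}]\le(e^\theta\phi^\theta(x)/L)^{\gamma/\beta}<\infty$ this gives $\mathbb{E}_{x,i,j}[e^{\gamma\tau}]\le n^{\gamma\alpha}\,\mathbb{E}[e^{\gamma W_N}]<\infty$, which is the claim.

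The only delicate point is the last one: within one round the event ``the coalescence attempt fails'' and the time ``it then takes to return to $G_\star$'' are both measurable with respect to the same portion of the trajectory and hence correlated, so their conditional expectations cannot simply be multiplied; the Cauchy--Schwarz (or Hölder) split repairs this at the cost of replacing $\mathcal{O}(n^{-3})$ by $\mathcal{O}(n^{-3/2})$ and shrinking $\gamma$ by a dimension-free factor, which is harmless. Everything else is a direct assembly of results already proved — dissolution through the Lyapunov functions $\phi^\theta$, and fast coalescence from configurations of height $\mathcal{O}(\log n)$.
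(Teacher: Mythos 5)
Your proposal is correct, and it follows the same high-level restart/renewal strategy as the paper: wait until hitting a Lyapunov sublevel set of $\phi^\theta$ via Proposition~\ref{moment_hitting_time}, attempt coalescence in time $\Ocal{\logn}$ using the quick-coalescence estimate, and if it fails restart; then show the failure probability $\Ocal{n^{-3}}$ beats the polynomial cost of a round, so a small dimension-free $\gamma$ works. However, the execution differs in one substantive way that is worth noting. You restart after a \emph{deterministic} round length $\alpha\logn$, which means the state at the end of a round, $X(W_{k-1}+\alpha\logn)$, is random and not deterministically bounded; consequently the indicator of round failure and the exponential moment of the subsequent re-hitting time are correlated through that state, and you correctly patch this with a Cauchy--Schwarz (or Hölder) split, paying a factor $n^{-3/2}$ instead of $n^{-3}$. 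The paper sidesteps this correlation entirely by restarting at the \emph{random} time $T_2 = \Tbad\wedge\alpha_3\logn$, where $\Tbad$ is the exit time from $\good = \{\phi^{\theta_2}\le L_2+4\}$: then $X(T_2)$ lies deterministically in the set $A$ of configurations at graph distance at most one from $\good$, so the restart cost $\esperancewithstartingpoint{X(T_2)}{e^{\gamma T_1}}$ is bounded by a deterministic polynomial $\max_{y\in A}\esperancewithstartingpoint{y}{e^{\gamma T_1}} = \Ocal{n^{\gamma p/\beta_1}}$, and the failure indicator can be pulled out directly without any decoupling inequality. The paper also packages the geometric-series bookkeeping more compactly as the self-referential inequality $E_*\le n^{\gamma\alpha_3}+\Ocal{n^{\gamma\alpha_3-3}}\cdot(\text{poly in }n)\cdot E_*$ applied to the truncated quantity $E_*=\esperancewithstartingpoint{*}{e^{\gamma(\tau\wedge k)}}$ (truncation by $k$ guarantees a priori finiteness, removed at the end by monotone convergence), whereas you sum an explicit geometric series over rounds; these are equivalent. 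Both proofs are valid; the paper's choice of the stopping time $T_2$ is the cleaner device, while your Cauchy--Schwarz patch is a legitimate and perhaps more routine alternative that you were right to flag as the delicate point.
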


\begin{proof}
    \orange{We only need to prove the result for big enough $n$. Let $\alpha_1$ be as in Proposition \ref{key_proposition}. Let $\theta_1 = 6/\alpha_1, \beta_1$ be a dimension-free constant, and $L_1 = L(\theta_1, \beta_1)$ as in Lemma \ref{boundL}. Let $\theta_2 < \theta_1$ be a constant that satisfies Proposition \ref{stability}, and let $L_2 > L_1$ and $\good$ be defined as in subsection 3.2.} Let $T_1$ be the hitting time of the set $\{\phi^{\theta_1} \leq L_1\}$. $\eqref{tempsdattentelaplace}$ says that $\esperancewithstartingpoint{x}{e^{\beta_1 T_1}}$ is finite for all $x \in \Omega$, so we only need to prove the result for $x \in \{\phi^{\theta_1} \leq L_1\}$. 
    By abuse of notation, we will note $\Pbb_*(\cdot)$ (resp. $\Ebb_*(\cdot)$) for the maximum of $\Pbb_{x, i, j}(\cdot)$ (resp. $\Ebb_{x, i, j}(\cdot)$) taken over all $x$ such that $\phi^{\theta_1}(x) \leq L_1$ and all $(i, j) \in [n] \times [n]$.
    We will prove that there exists $\gamma$ such that $\esperancewithstartingpoint{*}{e^{\gamma(\tau \wedge k)}}$ is bounded uniformly in $k$. Then the claim is proved simply by letting $k$ tend to infinity using the Monotone Convergence Theorem.\\
    Let $A = \good \cup \{y: \exists x \in \good,\: \Lcal(x, y) > 0\}$. Let $\Tbad$ be the exit time from $\good$, and let $T_2:= \Tbad \wedge \alpha_3 \logn$, where $\alpha_3$ is defined in Corollary \ref{staying_good_coalescence}. In fact, $A$ is the set of all possible values of $X$ up to time $\Tbad$, and in particular, $X(T_2) \in A$.\deleted{ Recall that $\alpha_1 = \dfrac{6}{\theta_1}, \theta_2 \leq \theta_1, L_2 \geq L_1$. Hence} \orange{By our definitions of the dimension free constants, when $n$ is large enough, } $\{\phi^{\theta_1} \leq L_1\} \subset \{\phi^{\theta_2} \leq L_2\} \cap \{\norminfty{\cdot} \leq \alpha_1\logn\}$. Consequently, by Proposition \ref{stability} and Proposition \ref{coalescence_logn},
    \begin{equation*}
        \probawithstartingpoint{*}{\tau \geq T_2} = \Ocal{n^{-3}}.
    \end{equation*}
    Note that $T_2 \leq \alpha_3 \logn$, and hence $\esperancewithstartingpoint{*}{e^{\gamma(\tau\wedge k)} \indicator{\tau < T_2}} \leq e^{\gamma \alpha_3 \logn} = n^{\gamma\alpha_3}$. We deduce that 
    \begin{align*}
        \esperancewithstartingpoint{*}{e^{\gamma (\tau\wedge k)}} \leq n^{\gamma \alpha_3} + \esperancewithstartingpoint{*}{e^{\gamma(\tau\wedge k)}\indicator{\tau \geq T_2}}.
    \end{align*}
    Conditionally on $\Fcal_{T_2}$, by the strong Markov property, we have
    \begin{align*}
        \esperancewithstartingpoint{*}{e^{\gamma(\tau\wedge k)}\indicator{\tau \geq T_2}} &\leq \esperancewithstartingpoint{*}{e^{\gamma(T_2 \wedge k)}\indicator{\tau \geq T_2}\esperancewithstartingpoint{X(T_2)}{e^{\gamma(\tau \wedge k)}}}\\
        &\leq n^{\gamma\alpha_3} \probawithstartingpoint{*}{\tau \geq T_2} \max_{y \in A} \esperancewithstartingpoint{y}{e^{\gamma T_1}} \esperancewithstartingpoint{*}{e^{\gamma(\tau \wedge k)}},
    \end{align*}    
    \orange{where in the last inequality we use the fact that $X(T_2) \in A$, almost surely.} Putting things together, we obtain 
    \begin{equation}
        \esperancewithstartingpoint{*}{e^{\gamma(\tau \wedge k)}} \leq n^{\gamma\alpha_3} + \Ocal{n^{\gamma\alpha_3 - 3}} \max_{y \in A} \esperancewithstartingpoint{y}{e^{\gamma T_1}} \esperancewithstartingpoint{*}{e^{\gamma(\tau \wedge k)}}.
    \end{equation}
    Consequently, 
    \begin{equation*}
        \esperancewithstartingpoint{*}{e^{\gamma(\tau \wedge k)}} \leq \dfrac{n^{\gamma\alpha_3}}{1 - \Ocal{n^{\gamma\alpha_3 - 3}} \max\limits_{y\in A}\esperancewithstartingpoint{y}{e^{\gamma T_1}}},
    \end{equation*}
    provided that the denominator of the right-hand side is positive. Note that for $y \in A$, $\norminfty{y} = \Ocal{\logn}$, and hence $\phi^{\theta_1}(y) < n^p$, for some dimension-free constant $p$. Then by \eqref{tempsdattentelaplace} and Jensen's inequality, for all $y\in A$ and $\gamma < \beta_1$, 
    \begin{equation*}
        \esperancewithstartingpoint{y}{e^{\gamma T_1}} \leq \esperancewithstartingpoint{y}{e^{\beta_1 T_1}}^{\gamma/\beta_1} = \Ocal{n^{\gamma p/\beta_1}}.
    \end{equation*}
    Then the denominator above is $1 - \Ocal{n^{\gamma\alpha_3 + \gamma p/\beta_1 - 3 }}$, which is positive when $\gamma$ is small enough. This finishes our proof.
\end{proof}
We now prove Theorem \ref{spectral_gap} and Corollary \ref{application_spectral_gap}.
\begin{proof}[Proof of Theorem \ref{spectral_gap} and Corollary \ref{application_spectral_gap}]
    In \cite{hermon2019version} (more precisely, in Corollary 3 and Lemma 13), the authors prove that for $P$ a doubly stochastic transition matrix on $[n]$,  
    \begin{equation*}
        \lambda_*(\Lcal) \leq \dfrac{\lambda_*(\Lcal^P)}{\lambda_*(P)} \leq (1 - 1/n)\dfrac{\esperance{r(X_1)}}{\variance{X_1}},
    \end{equation*}
    where the expectation and the variance are taken with respect to the stationary law $\pi$ of $\Lcal$.
    Lemma \ref{spectral_gap_lemma} and Proposition \ref{spectral_gap_lower_bound} readily imply that $\lambda_*(\Lcal) = \Omega(1)$, and hence so is $\dfrac{\lambda_*(\Lcal^P)}{\lambda_*(P)}$.
    It remains to prove that 
    \begin{equation}\label{interesting_quantity}
        \dfrac{\esperance{r(X_1)}}{\textrm{Var}\left[X_1\right]} = \Ocal{1}.
    \end{equation}
     We consider two cases: the case where the density is bounded away from zero : $\dfrac{1}{2} \leq \esperance{X_1} = \dfrac{m}{n} \leq \rho$, and the case of low density: $\esperance{X_1} = \dfrac{m}{n} < \dfrac{1}{2}$.\\
    In case the density is bounded away from zero, it is easy to deduce from Proposition \ref{geometricdecay} that $\esperance{r(X_1)} = \Theta(1)$ and $\variance{X_1} = \Theta(1)$, and hence the claim.\\
    In the case of low density, we have $\variance{X_1} \geq \esperance{X_1} - \esperance{X_1}^2$ and $\esperance{r(X_1)} \leq \sup\limits_{k\in\Zbb_+} \dfrac{r(k)}{k} \esperance{X_1}$. We deduce that
    \[\dfrac{\esperance{r(X_1)}}{\variance{X_1}} \leq \dfrac{\sup\limits_{k\in\Zbb_+} \dfrac{r(k)}{k}}{1 - \esperance{X_1}} < 2\sup\limits_{k\in\Zbb_+} \dfrac{r(k)}{k},\]
    which finishes the proof.
\end{proof}
Finally, we compute the Poincaré constant of the transition matrix $P$ in Example \ref{torus_example}.
\begin{proof}[Proof of example \ref{torus_example}]
    Let $P_1$ be the transition matrix of the simple random walk on $\Zbb/p\Zbb$. It is not hard to see that both $P$ and $P_1$ are reversible \wrt the uniform measures on their domains. Note that, for $f_1, ..., f_d$ some eigenfunctions of $P_1$ with eigenvalues $\lambda_1, ..., \lambda_d$ respectively, the function $f: (\Zbb/p\Zbb)^d \to \Rbb $ defined by \begin{equation*}
        f(x_1, ..., x_d) = \prod_{i = 1}^d f_i(x_i)
    \end{equation*}
    is an eigenfunction of $P$ with eigenvalue $\lambda := \dfrac{\sum_{i = 1}^d \lambda_i}{d}$. Moreover, as the eigenfunctions of $P_1$ generate the space of functions from $\Zbb/p\Zbb$ to $\Rbb$ due to reversibility, the functions $f$ of the form above also generate the space of functions from $(\Zbb/p\Zbb)^d$ to $\Rbb$. Hence every eigenvalue of $P$ is of the form $\lambda = \dfrac{\sum_{i = 1}^d \lambda_i}{d}$, where $\lambda_i, \; 1\leq i \leq d,$ are some eigenvalues of $P_1$. It is well-known that the eigenvalues of $P_1$ are $\cos\left(\dfrac{2\pi k}{p}\right), \; 0\leq k \leq p$ (see \eg Chapter 12 of \cite{wilmer2009markov}). Due to reversibility of $P$, $\lambda_*(P)$ is the smallest eigenvalue of $I-P$ (see \eg Chapter 12 of \cite{wilmer2009markov}), so it is given by 
    \begin{equation*}
        \lambda_*(P) = 1 - \dfrac{1}{d}\left(\cos\left(\dfrac{2\pi }{p}\right) + d-1\right) = \dfrac{1}{d}\left(1 - \cos\left(\dfrac{2\pi }{p}\right)\right) \approx \dfrac{1}{d} \cdot\dfrac{2\pi^2}{p^2},
    \end{equation*}
    which finishes our proof.
\end{proof}

\bibliographystyle{abbrv}
\bibliography{ref}

\end{document}